\newcommand{\wtilde}{{\widetilde{\nabla}}}
\setlist[description]{style=multiline,topsep=4pt,align=parright}%,font=\normalfont
\let\reftagform@=\tagform@
\def\tagform@#1{\maketag@@@{(\ignorespaces\textcolor{black}{#1}\unskip\@@italiccorr)}}
\newcommand{\iref}[1]{\textup{\reftagform@{\tcr{\ref{#1}}}}}
\titlespacing\section{0pt}{11pt plus 4pt minus 2pt}{6pt plus 2pt minus 2pt}
\titlespacing\subsection{0pt}{10pt plus 4pt minus 2pt}{4pt plus 2pt minus 2pt}
\titlespacing\subsubsection{0pt}{8pt plus 4pt minus 2pt}{4pt plus 2pt minus 2pt}
\titlespacing\paragraph{0pt}{6pt plus 4pt minus 2pt}{6pt plus 2pt minus 2pt}
\begin{document}

	\setlength{\abovedisplayskip}{4.5pt}
	\setlength{\belowdisplayskip}{4pt}

\title{Stochastic Three-Operator Splitting Algorithms for Nonconvex and Nonsmooth Optimization Arising from FLASH Radiotherapy}
\author{
Fengmiao Bian\thanks{Department of Mathematics, The Hong Kong University of Science and Technology, Hong Kong, China. E-mail: mafmbian@ust.hk} \and
 		Jiulong Liu \thanks{Institute of Computational Mathematics and Scientific/Engineering Computing, Academy of Mathematics and System Sciences, Chinese Academy of Sciences, Beijing, China. E-mail: jiulongliu@lsec.cc.ac.cn} \and
		Xiaoqun Zhang\thanks{School of Mathematical Sciences, MOE-LSC and Institute of Natural Sciences, Shanghai Jiao Tong University, Shanghai, China. E-mail: xqzhang@sjtu.edu.cn}\and
		Hao Gao \thanks{Department of Radiation Oncology, University of Kansas Medical Center, Kansas City, Kansas, USA. E-mail: hgao2@kumcc.edu} \and
		Jianfeng Cai \thanks{Department of Mathematics, The Hong Kong University of Science and Technology, Hong Kong, China and 
 HKUST Shenzhen-Hong Kong Collaborative Innovation Research Institute, Futian, Shenzhen, China. E-mail: jfcai@ust.hk.}
		}
\date{}
\maketitle

\begin{abstract}
Radiation therapy (RT) aims to deliver tumoricidal doses with minimal radiation-induced normal-tissue toxicity. Compared to conventional RT (of conventional dose rate), FLASH-RT (of ultra-high dose rate) can provide additional normal tissue sparing, which however has created a new nonconvex and nonsmooth optimization problem that is highly challenging to solve. In this paper, we propose a stochastic three-operator splitting (STOS) algorithm to address the FLASH optimization problem. We establish the convergence and convergence rates of the STOS algorithm under the nonconvex framework for both unbiased gradient estimators and variance-reduced gradient estimators. These stochastic gradient estimators include the most popular ones, such as SGD, SAGA, SARAH, and SVRG, among others. The effectiveness of the STOS algorithm is validated using FLASH radiotherapy planning for patients. 
\end{abstract}

\begin{keywords}
Nonconvex optimization $\cdot$  stochastic three-operator splitting $\cdot$ unbiased stochastic gradient $\cdot$  variance-reduced stochastic gradient $\cdot$  FLASH radiotherapy
\end{keywords}

\begin{AMS}
{90C06 $\cdot$ 90C15 $\cdot$ 90C26 $\cdot$ 90C90}
\end{AMS}

%%%%%%%%%%%%%%%%%%%%%%%%%%%%%%%%%%%%%%%%%%%%%%%%%%%%%%%%%%%%%%%%
\section{Introduction}
\subsection{Background}
Radiation therapy (RT) has long been a cornerstone of cancer treatment. With the continuous advancement of medical technology, there are various radiation therapy techniques available, such as IMPT and PBS \cite{zhu2010, Gao2020, Wilson2020}. Although these techniques have been widely adopted in the medical field, complete eradication of cancerous tissue still relies on the dose, which is limited by the risk of severe radiation-induced side effects. A new technique called flash radiotherapy (FLASH-RT) has emerged.  FLASH-RT delivers ultra-high dose rates, several orders of magnitude higher than conventional therapy \cite{Hughes2020}. Clinical data \cite{Hughes2020, Favaudon2014} indicates that  FLASH-RT  can achieve better control with fewer side effects. This is primarily because flash radiotherapy irradiates tissues at ultra-high dose rates ($\ge$ 40 Gy/s), potentially reducing the toxicity of normal tissues while retaining tumor-killing effects. Proton irradiation can achieve this ultra-high dose rate, due to the unique depth-dose characteristics of proton beams, allowing the delivery of ultra-high dose rates to deep tissues \cite{Buonanno2019, Beyreuther2019}. Additionally, clinical data \cite{Girdhani2019,Hughes2020} has demonstrated that FLASH proton radiotherapy, due to its superior immune response capabilities, has become an important treatment modality for certain acute and advanced cancers. 

While FLASH proton radiotherapy may become the primary radiotherapy method for certain tumors, current research on FLASH proton radiotherapy planning has mainly focused on optimizing dose distribution. However, each spot has constraints not only on dose distribution but also on dose rate distribution, highlighting the importance of exploring a new treatment optimization method called FLASH proton radiotherapy with dose and dose rate optimization\cite{gao2020simultaneous,lin2021sddro,gao2022simultaneous}. Mathematically, it can be formulated as the following constraint nonconvex optimization:
\begin{equation}\label{SDRRO}
\begin{aligned}
&\min_{x \in \mathbb{R}^{n}} H(x) := \left\| Ax - b \right\|^2, \qquad \textmd{s.t.} \begin{cases} x_i \geq 0,\\ x_i \geq \alpha,~~ \textmd{if}~~x_i >0,\\  Q x \geq u, \end{cases}
\end{aligned}
\end{equation}
where $x$ is the spot weight to be optimized, $b$ is the dose objective, $A$ represents the forward system matrix, $Q$ is a block matrix composed of the forward matrix that only maps $x$ to the dose in the ROI, $u$ is a vector associated with DVH constraints, and $\alpha$ represents the planning monitor unit. Here $H$ exhibits a large-scale summation structure, representing the sum of planning objectives. See section \ref{sec:experiments} for more details. Currently, most of the research on solving \eqref{SDRRO} employs the nonstandard alternating direction method of multipliers (ADMM) algorithm to solve the convex relaxation problem. A classical technique for solving constraint optimization is to transform it into a multiple summation problem via indicator functions. Here, we employ this technique to equivalently transform \eqref{SDRRO} into the following large-scale nonconvex optimization:
\begin{equation}\label{FRT}
\min_{x \in \mathbb{R}^n } H(x)  +\mathcal{I}_{ C }(x) + \mathcal{I}_{D } (x),
\end{equation}
 where $C=\big\{ x \in \mathbb{R}^n | x_i \in \{0\}\bigcup [ \alpha, +\infty), ~\forall~ i=1,2, \dots, n \big\} =  \mathbb{R}_+^n/(0,\alpha)^n $, the set $D= \big\{  x \in \mathbb{R}^n |  Qx \geq u \big\}$. This transformation allows us to apply classical splitting algorithms to solve \eqref{FRT}. The three-operator splitting algorithm \cite{DY} is an efficient algorithm for solving this kind of composite optimization. However, in our case, $H$ has a large-scale summation structure, and computing $\nabla H$ can be computationally expensive, reducing the efficiency of the method. This prompts us to propose a novel stochastic three-operator splitting algorithm and motivates the research presented below.

\subsection{Model and Algorithm}
Driven by the FLASH proton radiotherapy problem \eqref{FRT} and to broaden the applicability of our research, we consider the following general optimization problem:
\begin{equation}\label{model}
\min_{x \in \mathbb{R}^n} F(x) + G(x) + H(x),
\end{equation}
where $F, G$ and $H$ are proper lower semi-continuous functions. We do not assume convexity for $F$, $G$, or $H$ throughout this paper. Corresponding to the FLASH proton radiotherapy problem \eqref{FRT}, we know $H = \left\| Ax - b \right\|^2$ with a large-scale summation structure, while $F$ and $G$ correspond to indicator functions of two constraint sets. Problem \eqref{model} not only corresponds to the FLASH proton radiotherapy problem but also captures many other applications, particularly in signal/image processing and computer vision,  that involve multiple regularization terms. For instance, deep learning-based models in image processing often include both sparsity regularization terms and neural network-based regularization terms \cite{MKNg2023}. Non-negative low-rank matrix decompositions have both low-rank and non-negativity constraints \cite{FMB, GILLIS2014, PNg}. In compressive sensing, a highly significant type of regularization known as L1-L2 sparse regularization allows the compressive sensing problem to be formulated as a three-term composite nonconvex optimization problem \cite{LYan, YLX}. In robust principal component analysis (RPCA), the optimization problem includes constraints not only on the data term but also on the low-rank factor and sparse factor, which can be formulated as a three-term composite minimization \cite{CCW}. Operator splitting is an important method for solving composite optimization problems of type \eqref{model}. Classic algorithms such as Forward-Backward Splitting (FBS) and Douglas-Rachford Splitting (DRS) can be employed when \eqref{model} $H=0$. In \cite{ABS}, Attouch, Bolte, and Svaiter established the convergence of FBS for nonconvex and nonsmooth optimization under the assumption that $\nabla F$ is Lipschitz continuous. In \cite{LP}, Li and Pong applied the DRS algorithm to nonconvex feasibility problems and established its convergence under similar assumptions.

The above splitting algorithms are not suitable for \eqref{model} because they require computing the proximal operator of $G + H$ in each iteration. Computing this operator is much more expensive or even infeasible compared to computing the proximal operators of $G$ and $H$ separately. To address this limitation, Davis and Yin \cite{DY} proposed a three-operator splitting algorithm for solving \eqref{model}. In the three-operator splitting algorithm, each sub-problem involves solving the proximal operator of a single function, either $F$ or $G$. When the proximal operators of $F$ and $G$ are easy to solve, the three-operator splitting algorithm becomes a simple and effective method. Davis and Yin \cite{DY} established the convergence of a three-operator splitting algorithm for convex optimization. Liu and Yin in \cite{LY} proved the convergence of the three-operator splitting method when $F$ is weakly convex, and $G$ and $H$ are twice continuously differentiable with bounded Hessians. In \cite{BZ} Bian and Zhang established the global convergence of a three-operator splitting algorithm for solving nonconvex optimization problems and provided local convergence rates.

 However, when solving large-scale problems, the TOS algorithm may suffer from computational inefficiency due to the time-consuming computation of $\nabla H$, thereby reducing the efficiency of the TOS method.  To overcome this challenge, we propose a stochastic version of the TOS algorithm by replacing $\nabla H$ with its stochastic approximation. This will lead to a new stochastic three-operator splitting (STOS) algorithm that allows for more efficient computations. We present STOS in Algorithm \ref{alg:STOS} below.

\begin{center}
\begin{minipage}{0.95\linewidth}
	
\begin{algorithm}[H]
\caption{A Stochastic Three-Operator Splitting (STOS)}
\label{alg:STOS}
\begin{algorithmic}
\STATE{{\bf{Step 0.}}  Choose a step-size $\gamma >0$ and an initial point $x_{0}$.}

\STATE{{\bf{Step 1.}}  Set
\begin{subequations}\label{alg1}
\begin{align}
&y_{t+1} \in  \arg\min_{y}  \bigg\{  F(y) + \frac{1}{2\gamma}\|y-x_{t}\|^{2} \bigg\}, \label{algy}\\
&z_{t+1} \in  \arg\min_{z} \bigg\{ G(z) + \frac{1}{2\gamma}\|z - ( 2y_{t+1} - \gamma \wtilde H(y_{t+1}) - x_{t} ) \|^{2} \bigg\},  \label{algz} \\
&x_{t+1} = x_{t} + (z_{t+1} - y_{t+1}). \label{algx}
\end{align}
\end{subequations}
{\bf \qquad\quad~  end while.}
}
\STATE{{\bf{Step 2.}}  If a termination criterion is not met, go to {\bf Step 1}.}

\end{algorithmic}
\end{algorithm}

\end{minipage}
\end{center}
Note that the STOS algorithm is also an extension of these classical algorithms. When $F=0$, Algorithm \ref{alg:STOS} becomes the stochastic Forward-Backward Splitting (FBS) algorithm. When $H=0$, the Algorithm \ref{alg:STOS} corresponds to the classical Douglas-Rachford Splitting (DRS) algorithm. When $F=0$ and $G=0$, this algorithm becomes the stochastic gradient descent algorithm. Here, $\wtilde H$ can be either unbiased gradient estimators (such as SGD, etc.) or variance-reduced gradient estimators (such as SAGA, SVRG, SARAH, etc.).

\subsection{Related work} 

Stochastic splitting algorithms for solving large-scale nonconvex optimization problems have received extensive interest in the last two decades.

When $F=G=0$ in \eqref{model}, the classical stochastic gradient descent algorithm \cite{RM} can solve this kind of problem. In \cite{GL}, Ghadimi and Lan proposed the randomized stochastic gradient (RSG) method to solve \eqref{model} with $F=G=0$ and presented the first non-asymptotic convergence analysis of SGD. They demonstrated that under the assumptions of $H$ being smooth, $\wtilde H$ being an unbiased and bounded variance estimator, the RSG method achieves a solution $x_*$ satisfying $\mathbb{E}[\| \nabla H(x_*) \|^2] \leq \varepsilon$ with an iterative complexity $\mathcal{O}(1/\varepsilon^2)$. Further, the stochastic gradient descent algorithm has also been combined with certain variance-reduced gradient estimators. For instance, Shamir \cite{Shamir} proposed nonconvex SVRG and considered the problem of computing several leading eigenvectors. Allen-zhu and Hazan \cite{ZH} also proposed SVRG, showing its linear convergence rate. Later, Reddi et al. \cite{RHSPS} introduced the nonconvex SAGA method and demonstrated its linear convergence to the global optimal solution for a class of nonconvex optimization.

When $F=0$ in \eqref{model}, this problem can be solved by the classical proximal gradient descent (PGD) algorithm. Ghadimi, Lan, and Zhang \cite{GLZ} introduced a randomized stochastic projected gradient (RSPG) algorithm to solve \eqref{model} with a constraint set $X$ when $F=0$. It is worth noting that when the constraint set $X=\mathbb{R}^n$, the RSPG algorithm becomes the stochastic PGD algorithm. Further, the PGD algorithm combined with variance-reduced gradient estimators has also received a lot of attention.  Reddi et al. \cite{RSPS} integrate the proximal gradient descent with variance-reduced estimators SAGA and SVRG, referred to as PROXSAGA and PROXSVRG, respectively. They showed that under the assumptions of $G$ being nonsmooth convex and $H$ being smooth nonconvex, achieving an $\varepsilon$-accurate solution requires calling the number of proximal oracles to be $\mathcal{O}(1/\varepsilon)$.  Pham et al. \cite{PNPD} combined proximal gradient descent with the SARAH stochastic estimator, referred to as ProxSARAH. Under the assumptions of $H$ being smooth nonconvex and $G$ nonconvex nonsmooth, they demonstrated that ProxSARAH finds an $\varepsilon$-stationary point with complexity $\mathcal{O}(N + N^{1/2}\varepsilon^{-2})$ for $H$ having finite-sum structure.

The study of \eqref{model} with three-term composite structures is limited in the nonconvex stochastic setting. Metel and Takeda \cite{MT} proposed the stochastic proximal gradient method for solving \eqref{model} and showed that the iteration complexity is $\mathcal{O}(1/\varepsilon^3)$ when the distance to the subdifferential mapping of loss is less than $\varepsilon$. Yurtsever, Mangalick, and Sra \cite{YMS} proposed a stochastic version of a three-operator splitting algorithm to solve \eqref{model} by combining unbiased and bounded-variance stochastic gradient estimators, and showed that finding an $\varepsilon$-stationary point requires an iteration complexity of $\mathcal{O}(1/\varepsilon^3)$ based on a variational inequality. Driggs et al. \cite{DTLDS} proposed a stochastic proximal alternating linearization algorithm when $H$ is a cross-term in \eqref{model}, which combines a class of variance-reduced stochastic gradient estimators. The algorithm finds a $\varepsilon$-stationary point with a complexity of  $\mathcal{O}(1/\varepsilon)$. Further, for a class of large-scale three-block composite optimization problems in deep learning, Bian, Liu, and Zhang \cite{BLZ2022} propose a new stochastic three-block splitting algorithm for \eqref{model} with a cross-term.

\subsection{Contributions}
In this paper, we propose a stochastic three-operator splitting (STOS) algorithm to solve the nonconvex and nonsmooth problem \eqref{model}. The main contributions of our paper can be summarized as follows:\\

\begin{itemize}
\item[1.]  We propose a stochastic three-operator splitting (STOS) algorithm for solving \eqref{model}, which combines two types of stochastic gradient estimators: unbiased estimators and variance-reduced estimators. Compared to the stochastic TOS in \cite{YMS} which only considers unbiased gradient estimator, our STOS (Algorithm \ref{alg:STOS}) combines both unbiased gradient estimators and variance-reduced gradient estimators. These include the most commonly used gradient estimators. Hence, our algorithm can select the appropriate gradient estimators for different applications.\\

\item[2.] When the unbiased gradient estimator is incorporated, we establish the convergence and obtain an $\mathcal{O}(1/\varepsilon)$ convergence rate for the STOS algorithm under the variance-bounded assumption. When a variance-reduced gradient estimator is incorporated, we construct a stability function for the STOS algorithm. Furthermore, if the objective function is semi-algebraic, we obtain the convergence and convergence rate of STOS. Compared to the stochastic TOS in \cite{YMS}, our theoretical analysis does not require the convexity of $F$ and $G$.\\

\item[3.] In the numerical experiments, we apply the STOS algorithm to solve the FLASH proton radiotherapy with dose and dose rate optimization in radiation therapy. In all current research work, nonstandard ADMM algorithms are used to solve the convex relaxation of this problem. In this paper, we directly solve this nonconvex problem using the stochastic three-operator splitting algorithm. To the best of our knowledge, this is the first time a stochastic algorithm has been adopted to solve the FLASH proton radiotherapy optimization. All test results demonstrate that STOS significantly outperforms the nonstandard ADMM algorithm.\\
\end{itemize}

\noindent{\bf Notations and paper organization.} Before presenting the main content of the paper, we list some notations used throughout. We denote $\mathbb{R}^n$ as the n-dimensional Euclidean space and $\langle \cdot, \cdot \rangle$ as the inner product.  The norm induced by the inner product is denoted as $\| \cdot \| = \sqrt{\langle \cdot, \cdot \rangle}$. An extended-real-valued function $f: \mathbb{R}^n \to (-\infty,  \infty]$ is proper if it is never $-\infty$ and its domain, dom$f:= \{x \in \mathbb{R}^n: f(x) < + \infty\}$ is nonempty. The function is closed if it is proper and lower semicontinuous. A point $x_*$ is a stationary point of a function $f$ if $0 \in \partial f(x_*)$. $x_*$ is a critical point of $f$ if $f$ is differentiable at $x_*$ and $\nabla f(x_*) = 0$. A function is coercive if $\lim_{\| x \| \to \infty} f(x) = +\infty$. We call $f$ a $\sigma$-convex function if $f - \frac{\sigma \| \cdot \|}{2}$ is a convex function. When $\sigma > 0$, $f$ is a strongly convex function. When $\sigma < 0$, $f$ is a weakly convex function. When $\sigma =0$, $f$ is a convex function.

The rest of this paper is organized as follows. We first present some notation and preliminaries in Section \ref{sec:notation}. The principal theoretical analysis of our proposed algorithm can be found in Section \ref{sec:convergence}, followed by numerical results in Section \ref{sec:experiments}. In Section \ref{sec:conclude}, we give some concluding remarks.

%%%%%%%%%%%%%%%%%%%%%%%%%%%%%%%%%%%%%%%%%%%%%%%%%%%%%%%%%%%%%%%%
%%%%%%%%%%%%%%%%%%%%%%%%%%%%%%%%%%%%%%%%%%%%%%%%%%%%%%%%%%%%%%%%

\section{Notation and preliminaries}
\label{sec:notation}
This section provides the necessary notations and definitions for our paper.  For STOS (Algorithm \ref{alg:STOS}), two types of gradient estimators can be used: unbiased gradient estimators and variance-reduced gradient estimators. The definitions for these two types of gradient estimators are presented below.\\

\subsection{Unbiased Gradient Estimator} 
A class of gradient estimators that are often used in stochastic algorithms are unbiased gradient estimators, such as the most classical stochastic gradient descent (SGD) estimator \cite{RS}. In this paper, we also analyze STOS (Algorithm \ref{alg:STOS}) combined unbiased gradient estimators. We define an unbiased gradient estimator.

\begin{definition}[Unbiased Gradient Estimator]\label{unbiased}
The stochastic gradient estimator $\wtilde H(x)$ is unbiased if
\begin{equation*}
\mathbb{E} [\wtilde H(x)] = \nabla H(x).
\end{equation*}
\end{definition}

\begin{remark}
In many applications, the function $H$ has a finite-sum structure, such as in empirical risk minimization problems encountered in deep learning. In this case, randomness mainly arises from the way samples are drawn from the sum, and the unbiased gradient estimator can be expressed in the following unified form:
\begin{equation}\label{eq500}
\wtilde H(x) = \frac{1}{N}\sum_{i=1}^N  v_i \nabla H_i (x),
\end{equation}
where $\{ v_i \}_{i=1}^N $ is a sampling vector drawn from a certain distribution $\mathcal{D}$. The random variable $v_i$ can take different forms for different sampling methods. A representative sampling method is $b$-nice sampling without replacement, which forms the stochastic gradient descent (SGD) estimator commonly used in deep learning.
\end{remark}
We provide the specific form of SGD, there exist other sampling methods that can produce an unbiased stochastic gradient, such as sampling with replacement and independent sampling without replacement. For more details, we refer to readers to \cite{KR}.
\begin{definition}[SGD \cite{RS}]\label{SGD}
The SGD estimator $\wtilde^{SGD}H(x)$ is defined as follows: 
\begin{equation*}
\wtilde^{SGD} H(x_t) 
= \frac{1}{b} \sum_{j \in J_t}  \nabla H_{j} ( x_t ),
\end{equation*}
where $J_t$ is a random subset uniformly drawn from $\{ 1, \cdots, N \}$ of fixed batch size $b$.
\end{definition}

\begin{remark}
For the $b$-nice sampling without replacement, let $v_i = \frac{1_{i \in J_t}}{p_i}$ with $1_{i \in J_t} = 1$ for $i \in J_t$ and $0$ otherwise, and $p_i = \frac{b}{N}$ in \eqref{eq500}, resulting in SGD estimator.
\end{remark}

\subsection{Variance-Reduced Gradient Estimator.} 
Variance-reduced gradient estimators have been proposed and widely adopted in stochastic algorithms, such as SVRG (stochastic variance reduced gradient) \cite{JZ},  SAG (stochastic average gradient) \cite{SRB}, and SDCA (stochastic dual coordinate ascent) \cite{SZ}. In \cite{DTLDS}, a unified definition is provided for a class of variance-reduced gradient estimators. In this paper, we analyze the combination of STOS (Algorithm \ref{alg:STOS}) with such variance-reduced gradient estimators taken from\cite{DTLDS}. For convenience, we recall the definition of variance-reduced gradient estimators below.

\begin{definition}[Variance-reduced Gradient Estimator {\cite[Definition 2.1]{DTLDS}}]\label{var-redu}
A gradient estimator $\wtilde$ is called variance-reduced if there exist constants $V_1,~V_2,~V_{\Upsilon} \geq 0$ and $\rho \in (0, 1]$ such that
\begin{itemize}
\item[1.] (MSE Bound). There exists a sequence having random variables $\{ \Upsilon_t \}_{t \geq 1}$ of the form $\Upsilon_t = \sum_{i=1}^s \| v_t^i \|^2$ for some random vectors $v_t^i$ such that
\begin{equation}\nonumber
\mathbb{E}_t \| \wtilde H(x_t) - \nabla H(x_t) \|^2 \leq \Upsilon_t + V_1 (\mathbb{E}_t \| x_{t+1} - x_t \|^2 + \| x_t - x_{t-1} \|^2 ),
\end{equation}
and, with $\Gamma_t = \sum_{i=1}^s \| v_t^i \|,$
\begin{equation}\label{var-mse1}
\mathbb{E}_t \| \wtilde  H(x_t) - \nabla H(x_t) \| \leq \Gamma_t + V_2 (\mathbb{E}_t \| x_{t+1} - x_t \| + \| x_t - x_{t-1} \|).
\end{equation}
\item[2.] (Geometric Decay). The sequence $\{ \Upsilon_t \}_{t\geq 1}$ decays geometrically:
\begin{equation}\label{var-geo}
\mathbb{E}_t \Upsilon_{t+1} \leq (1 - \rho) \Upsilon_t + V_{\Upsilon} \Pa{ \mathbb{E}_t \| x_{t+1} - x_t \|^2 + \| x_t - x_{t-1} \|^2 }.
\end{equation}
\item[3.] (Convergence of Estimator). For all sequences $\{ x_t \}_{t = 0}^{\infty}$ satisfying $\lim_{t \to \infty} \mathbb{E} \| x_t - x_{t-1} \|^2$ $ \to 0,$ it follows that $\mathbb{E} \Upsilon_t \to 0$ and $\mathbb{E} \Gamma_t \to 0$.
\end{itemize}
\end{definition}

The variance-reduced gradient estimator defined in Definition \ref{var-redu} is widely used in many algorithms, such as SAGA \cite{DBL}, SARAH \cite{NLST}, SAG \cite{SRB}, and SVRG \cite{KLRT, JZ}. It has been verified in \cite{DTLDS, BLZ} that these four estimators are variance-reduced gradient estimators of this type. Below, we provide the definitions for these four estimators.

\begin{definition}[SAGA \cite{DBL}]\label{SAGA}
The SAGA gradient approximation $\wtilde^{SAGA} H(x)$ is defined as follows:
\begin{equation}\nonumber
\wtilde^{SAGA} H(x_t) 
= \frac{1}{b} \big( \sum_{j \in J_t} \nabla H_j (x_t) - \nabla H_j (\varphi_t^j) \big) + \frac{1}{N} \sum_{i = 1}^N \nabla H_i (\varphi_t^i),
\end{equation}
where $J_t$ is mini-batches containing $b$ indices. The variables $\varphi_t^i$ follow the update rules $\varphi_{t+1}^i = x_t$ if $i \in J_t$ and $\varphi_{t+1}^i = \varphi_t^i$ otherwise. 
\end{definition}

\begin{definition}[SARAH \cite{NLST}]\label{sarah}
The SARAH estimator reads for $t=0$ as 
\begin{equation}\nonumber
\wtilde^{SARAH} H(x_0) = \nabla H(x_0).
\end{equation}
For $t = 1, 2, \dots $, define random variables $p_t \in \{ 0, 1 \}$ with $P(p_t = 0) = \frac{1}{p}$ and $P(p_t = 1) = 1 - \frac{1}{p}$, where $p \in (1, \infty)$ is a fixed chosen parameter. Let $J_t$ be a random subset uniformly drawn from $\{ 1, \dots, N \}$ of fixed batch size $b$. Then for $t = 1, 2, \dots$ the SARAH gradient approximation reads as 
\begin{equation}\nonumber
\wtilde^{SARAH} H(x_t) 
= 
\left\{
\begin{aligned}
\nabla H(x_t) &: \textrm{if}~~p_t = 0,\\
\frac{1}{b} \big( \sum_{j \in J_t} \nabla H_j (x_t) - \nabla H_j (x_{t-1}) \big) + \wtilde^{SARAH} H(x_{t-1}) &: \textrm{if}~~p_t = 1.
\end{aligned}
\right.
\end{equation}
\end{definition}

\begin{definition}[SAG \cite{SRB}]\label{SAG}
The SAG gradient approximation $\wtilde^{SAG}H(x)$ is defined as follows:
\begin{equation*}
\wtilde^{SAG} H(x_t) 
= \frac{1}{N} \sum_{j \in J_t} \big( \nabla H_{j_t} (x_t) - \nabla H_{j_t} (\varphi_t^j) \big) + \frac{1}{N} \sum_{i = 1}^N \nabla H_i (\varphi_t^i),
\end{equation*}
where $J_t$ is mini-batches containing $b$ indices. And the gradient history update follows : $\varphi_{t+1}^{j_t} = x_t$ and $\nabla H_i (\varphi_{t+1}^i) = \begin{cases} \nabla H_i (x_t) ~~~\textmd{if}~i \in J_t,\\ \nabla H_i(\varphi_t^i)~~~\textmd{o.w.} \end{cases}$
\end{definition}

\begin{definition}[SVRG \cite{JZ, KLRT}]\label{SVRG}
The SVRG gradient approximation $\wtilde^{SVRG}H(x)$ is defined as follows:
\begin{equation}\nonumber
\wtilde^{SVRG} H(x_t) 
= \frac{1}{b} \sum_{j \in J_t} \big(  \nabla H_{j_t} (x_t) - \nabla H_{j_t} (\varphi_s) \big) + \frac{1}{N} \sum_{i = 1}^N \nabla H_i (\varphi_s),
\end{equation}
where $J_t$ is a random subset uniformly drawn from $\{ 1, \cdots, N\}$ of fixed batch size $b$. And $\varphi_s$ is a point updated every $m$ step where the $m$ is the number of steps in the inner loop.
\end{definition}

\begin{remark}
Both types of estimators can be combined with our algorithm, but they have their strengths and weaknesses depending on the application. SGD is the simplest stochastic gradient estimator that only relies on the gradient computed on each batch without additional gradient storage or computation. However, SGD may suffer from relatively large variance, which requires a smaller step size to ensure convergence, leading to slow convergence rates. To address this shortcoming, several variance-reduced gradient estimators have been proposed, such as SVRG, SAGA, SAG, and SARAH. These gradient estimators reduce the variance, allowing for larger step sizes and faster convergence rates. However, the design of these stochastic gradient estimators often requires additional gradients (or dual variable) storage and full gradient computation, increasing the amount of storage and computation needed for faster convergence. This can make these variance-reduced gradient estimators difficult to use for some more complex problems, such as structured prediction and neural network learning.  Therefore, for different problems, one must weigh the trade-offs between using SGD for fast computation per iteration with slow convergence or variance-reduced gradient estimators for slower computation per iteration but faster convergence. Regarding the variance-reduced gradient estimators in \cite{DTLDS}, SAG, SVRG, SAGA, and SARAH all belong to this type of estimator, but not all of them are biased gradient estimators. For example, SAGA is unbiased, and SARAH is biased. 
\end{remark}

\subsection{Kurdyka-\L ojasiewicz property} 
The Kurdyka-\L ojasiewicz (KL) property plays a crucial role in the convergence analysis of nonconvex optimization problems. We first recall the definition of KL property, which is particularly applicable to semi-algebraic functions. For a more detailed account, we refer readers to \cite{ABRS, ABS, BDL, BDLS, BDLM} and the references therein. Let $\varepsilon_1$ and $\varepsilon_2$ be two real numbers satisfying $-\inf < \varepsilon_1 < \varepsilon_2 < +\inf$. We define the set $[ \varepsilon_1< F < \varepsilon_2 ] \overset{\mathrm{def}}{=} \{ x\in \mathbb{R}^{m_1} : \varepsilon_1 < F(x) < \varepsilon_2\}$.

\begin{definition}[KL property]\label{KL1}
A function $F: \mathbb{R}^{n} \to \mathbb{R} \cup \{+\infty\}$ has the Kurdyka-\L ojasiewicz property at $x^{*} \in$ dom $\partial F$ if there exist $\eta \in (0, \infty]$, a neighborhood $U$ of $x^{*}$, and a continuous concave function $\varphi : [0, \eta) \to \mathbb{R}_{+}$ such that:

\begin{itemize}
\item [(i)]  $\varphi(0) = 0,~\varphi \in C^{1}((0, \eta))$, and  $\varphi^{'}(s) > 0$ for all $s \in (0, \eta)$;

\item[(ii)] for all $x \in U \cap [F(x^{*}) < F < F(x^{*})+ \eta]$ the Kurdyka-\L ojasiewicz inequality holds, i.e.,
$$ 
\varphi^{'}(F(x) - F(x^{*})) \textmd{dist}(0, \partial F(x)) \geq 1.
$$
\end{itemize}
If $F$ satisfies the Kurdyka-\L ojasiewicz property at each point of dom $\partial F$, then it is called a KL function.
\end{definition}

Roughly speaking, the KL property allows for sharp optimization up to reparameterization via $\varphi$, which is known as a desingularizing function for $F$. A typical class of KL functions are semi-algebraic functions, such as the $\ell_0$ pseudo-norm and the rank function. More specifically, we can find a detailed result on the KL property in \cite[Section 4.3]{ABRS}. Further arguments can be found in \cite[Section 2]{BDL} and \cite[Corollary 16]{BDLS}. Semi-algebraic functions are easier to identify and encompass a wide range of possibly nonconvex functions that arise in applications, as shown in \cite{ABRS,ABS,BDL,BDLS, BDLM}. To clarify, we revisit the definition of semi-algebraic functions.

\begin{definition}[Semi-algebraic set]\label{Sem-201} A semi-algebraic set $S \subseteq \mathbb{R}^{n}$ is a finite union of sets of the form
\begin{equation*}
\Big\{ x \in \mathbb{R}^{n} : h_{1} (x) = \cdots h_{k} (x) = 0, ~g_{1} (x) < 0, \dots , g_{l} (x) < 0 \Big\},
\end{equation*}
where $g_{1}, \dots, g_{l}$ and $h_{1}, \dots, h_{k}$ are real polynomials. 
\end{definition} 
\begin{definition}[Semi-algebraic function]\label{Sem-202} A function $f : \mathbb{R}^{n} \to \mathbb{R}$ is semi-algebraic if its graph $\big\{ (x,~f(x)) \in \mathbb{R}^{n+1} : x \in \mathbb{R}^{n} \big\}$ is semi-algebraic.
\end{definition} 

\begin{lemma}[KL inequality in the semi-algebraic cases]\label{klsemi}
Let $h$ be a proper closed semi-algebraic function on $\mathbb{R}^n$. Then, $h$ satisfies the Kurdyka-\L ojasiewicz property at all points in dom $\partial h$ with $\varphi(s) = c s^{1- \theta}$ for some $\theta \in [0,1)$ and $c > 0.$
\end{lemma}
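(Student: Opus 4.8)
The plan is to deduce the KL property from the nonsmooth \L ojasiewicz gradient inequality for semi-algebraic functions, and then to verify that this inequality forces the desingularizing function to take the stated power form. Concretely, after translating so that $h(x^*) = 0$, it suffices to produce constants $c > 0$, $\theta \in [0,1)$, and a neighborhood $U$ of $x^*$ such that
\begin{equation}\label{eq:loja}
\big(h(x)\big)^{\theta} \leq c\, \mathrm{dist}(0, \partial h(x)) \qquad \text{for all } x \in U \text{ with } 0 < h(x) < \eta .
\end{equation}
Given \eqref{eq:loja}, I would take $\varphi(s) = \frac{c}{1-\theta}\, s^{1-\theta}$, which is continuous and concave on $[0,\eta)$ (since $1-\theta \in (0,1]$), satisfies $\varphi(0)=0$, $\varphi \in C^1((0,\eta))$, and $\varphi'(s) = c\, s^{-\theta} > 0$. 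A one-line computation then gives
\begin{equation*}
\varphi'\big(h(x)\big)\,\mathrm{dist}(0, \partial h(x)) = c\,\big(h(x)\big)^{-\theta}\,\mathrm{dist}(0, \partial h(x)) \geq \big(h(x)\big)^{-\theta}\big(h(x)\big)^{\theta} = 1,
\end{equation*}
which is exactly the KL inequality of Definition \ref{KL1}. So the whole lemma reduces to establishing \eqref{eq:loja} with a power-law exponent.

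The second step is to show that the quantity governing the right-hand side of \eqref{eq:loja} is itself semi-algebraic. Since $h$ is semi-algebraic, its graph is a semi-algebraic set, and the graph of the limiting subdifferential $\partial h$ is obtained from the graph of $h$ through operations (forming difference quotients, intersections, and taking outer limits and closures realized as projections of semi-algebraic sets) that preserve semi-algebraicity by the Tarski--Seidenberg theorem. Consequently the set-valued map $x \mapsto \partial h(x)$ has a semi-algebraic graph, and therefore the scalar function $x \mapsto \mathrm{dist}(0, \partial h(x))$ is semi-algebraic as a projection/composition of semi-algebraic data. I would record these stability facts as the technical backbone.

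The third step is to extract the exponent. I would introduce the one-variable function $m(t) := \inf\{\mathrm{dist}(0,\partial h(x)) : x \in U,\ h(x) = t\}$ for $t \in (0,\eta)$; by the preceding step and closure of semi-algebraic sets under projection, $m$ is semi-algebraic. The monotonicity lemma for semi-algebraic functions lets me shrink $\eta$ so that $m$ is continuous and monotone near $0$, and the Puiseux/growth behaviour of a one-variable semi-algebraic function near the origin yields $m(t) \geq c_0\, t^{\theta}$ for some $\theta \in [0,1)$, once one rules out $m(t)$ decaying faster than every power of $t$. This non-degeneracy is where the curve selection lemma enters: a hypothetical super-polynomial decay would be witnessed by a semi-algebraic curve along which $h$ fails to separate from its critical value, contradicting the structure near $x^*$. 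Rearranging $m(t) \geq c_0 t^\theta$ gives \eqref{eq:loja}.

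I expect the main obstacle to be precisely this nonsmooth passage: controlling $\mathrm{dist}(0,\partial h(x))$ uniformly over each level slice and ruling out super-polynomial decay of $m(t)$, which in the smooth case is the classical \L ojasiewicz gradient inequality but here must be run through the limiting subdifferential and the curve selection lemma for semi-algebraic sets. This is essentially the content of the Bolte--Daniilidis--Lewis extension of \L ojasiewicz's inequality to nonsmooth functions; I would cite \cite{BDL, BDLS} for the structural results and adapt their argument to the semi-algebraic class stated here.
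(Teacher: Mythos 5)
The paper never proves Lemma \ref{klsemi}: it is quoted as a known fact, with pointers to \cite[Section 4.3]{ABRS}, \cite{BDL} and \cite[Corollary 16]{BDLS}, so the only comparison available is against the standard argument you are reconstructing --- which is indeed the Bolte--Daniilidis--Lewis route. Your first two steps are sound: reducing the KL property of Definition \ref{KL1} with $\varphi(s)=cs^{1-\theta}$ to the subgradient \L ojasiewicz inequality $h(x)^{\theta}\leq c\,\mathrm{dist}(0,\partial h(x))$ is a correct one-line verification, and the semi-algebraicity of the graph of $\partial h$, hence of $x\mapsto \mathrm{dist}(0,\partial h(x))$ and of the marginal function $m(t)$, is a standard Tarski--Seidenberg argument.

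The genuine gap is in your third step, where the exponent is extracted. For a one-variable semi-algebraic function, super-polynomial decay at $0^{+}$ is impossible automatically: by the Puiseux expansion, either $m\equiv 0$ on some interval $(0,\epsilon)$ or $m(t)=at^{\alpha}+o(t^{\alpha})$ with $a>0$ and rational $\alpha\geq 0$. So the curve-selection argument as you deploy it is aimed at a non-issue, and two different things actually need proof. First, $m(t)>0$ for all small $t>0$, i.e.\ critical values of $h$ do not accumulate at $h(x^*)$; this is the semi-algebraic nonsmooth Sard theorem (the set of critical values is finite), not a consequence of Puiseux or the monotonicity lemma. Second --- and this is the heart of the lemma --- one must show $\alpha<1$: Puiseux alone permits $\alpha\geq 1$, in which case $m(t)\geq c_0 t^{\alpha}$ gives no bound of the form $m(t)\geq c'\,t^{\theta}$ with $\theta\in[0,1)$ (for small $t$ the inequality goes the wrong way), and no desingularizing function $cs^{1-\theta}$ follows from your estimate. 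The standard way to force $\alpha<1$ is precisely the step your sketch omits: select a semi-algebraic curve $t\mapsto x(t)$ with $h(x(t))=t$ along which the infimum defining $m$ is nearly attained, invoke the finite length of bounded semi-algebraic arcs together with a nonsmooth chain rule (the projection formula of \cite{BDLS}) to obtain $1=\frac{d}{dt}\,h(x(t))\leq \mathrm{dist}(0,\partial h(x(t)))\,\|\dot x(t)\|\lesssim m(t)\,\|\dot x(t)\|$, and integrate to conclude $\int_0^{\eta} m(t)^{-1}\,dt<\infty$, which rules out $\alpha\geq 1$. Since your stated fallback is to cite \cite{BDL, BDLS} and adapt their argument --- exactly what the paper itself does --- the conclusion is reachable, but as written the sketch mislocates where curve selection is needed and does not establish $\theta\in[0,1)$.
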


%%%%%%%%%%%%%%%%%%%%%%%%%%%%%%%%%%%%%%%%%%%%%%%%%%%%%%%%%%%%%%%%
%%%%%%%%%%%%%%%%%%%%%%%%%%%%%%%%%%%%%%%%%%%%%%%%%%%%%%%%%%%%%%%%

\section{Convergence analysis}
\label{sec:convergence}
This section focuses on the convergence analysis of STOS (Algorithm \ref{alg:STOS}) combined with two different types of stochastic gradient estimators: unbiased estimator and variance-reduced estimator. Our analysis is based on the following assumptions concerning the objective functions $F, G$, and $H$.

\begin{assumption}\label{ass-fun}
Functions $F$, $G$, and $H$ satisfy the following:
\begin{itemize}
\item[(a1)] $F$ has a Lipschitz continuous gradient, i.e., there exists a constant $L>0$ such that
\begin{equation*}
\| \nabla F(y_1) - \nabla F(y_2) \| \leq L \| y_1 - y_2 \|  \quad \forall ~ y_1, y_2 \in \mathbb{R}^n.
\end{equation*}
\item[(a2)] $G$ is a proper closed function with a nonempty mapping $\mathcal{P}_{\gamma G} (x)$ for any $x$ and for $\gamma >0$.\\
\item[(a3)] For each $i = 1, 2, \cdots, N$, $H_i$ has a Lipschitz continuous gradient, i.e., there exists a constant $\beta > 0$ such that
\begin{equation*}
\| \nabla H_i (y_1) - \nabla H_i (y_2) \| \leq \beta \| y_1 - y_2 \|   \quad \forall~~y_1, y_2 \in \mathbb{R}^n.
\end{equation*}
\end{itemize}
\end{assumption}
If $F$ has a Lipschitz continuous gradient, then we can always find $l \in \mathbb{R}$ such that $F + \frac{l}{2} \| \cdot \|^2$ is convex, in particular, $l$ can be taken to be $L$. We also present the optimality conditions for Algorithm \ref{alg:STOS}:  
\begin{subequations}\label{opti}
\begin{align}
& 0 = \nabla F(y_{t+1}) + \frac{1}{\gamma} (  y_{t+1} - x_{t}), \label{opti-1}\\
& 0 \in \partial G(z_{t+1}) + \frac{1}{\gamma} (z_{t+1} + \gamma \wtilde H(y_{t+1})  - 2y_{t+1} + x_{t}). \label{opti-2}
\end{align}
\end{subequations}
These conditions will be frequently used in the convergence analysis.

\subsection{Convergence rates for unbiased gradient estimator}
\label{conv:unbiased}
In this subsection, we consider the stochastic nonconvex problem \eqref{model}  with $H$ being the expectation of a function of a random variable, i.e., $H(x) = \mathbb{E}_{\xi} H(x, \xi)$, where $\xi$ is a random variable with distribution $\mathcal{P}$:
\begin{equation}\label{unmodel}
\min_{x \in \mathbb{R}^n}  F(x) + G(x) + \mathbb{E}_{\xi} H(x, \xi).
\end{equation}
As a special case of \eqref{unmodel}, if $\xi$ is a uniformly random vector defined on a finite support set $\Omega:= \{\xi_1, \xi_2, \cdots, \xi_N\}$, then \eqref{unmodel} simplifies to the following  finite-sum minimization problem:
\begin{equation*}
\min_{x \in \mathbb{R}^n} F(x) + G(x) + \frac{1}{N}\sum_{i = 1}^N H_i (x),
\end{equation*}
where $H_i (x):= H(x, \xi_i)$ for $i= 1, \cdots, N$. Here, the stochastic gradient estimator $\wtilde H$ in Algorithm \ref{alg:STOS} adopts the following SGD estimator: 
\begin{equation*}
\wtilde H(x) = \frac{1}{b} \sum_{\xi \in J_t}  \nabla H(x, \xi),
\end{equation*}
where $J_t$ is a set of $b$ $i.i.d.$ samples from distribution $\mathcal{P}$. Our convergence analysis relies on the following assumptions.

\begin{assumption}\label{ass-bv}
We assume that
\begin{itemize}
\item[(i)] $\nabla H(x, \xi)$ is an unbiased  estimator of $\nabla H(x)$, i.e.,
\begin{equation*}
\mathbb{E}_{\xi} [\nabla H(x, \xi)] = \nabla H(x) \qquad  \forall x \in \mathbb{R}^n.
\end{equation*}
\item[(ii)] $\nabla H(x, \xi)$ has bounded variance, i.e.,
\begin{equation*}
\mathbb{E}_{\xi} [\| \nabla H(x, \xi) - \nabla H(x) \|^2 ] \leq \sigma^2 \qquad  \forall x \in \mathbb{R}^n.
\end{equation*}
\end{itemize}
\end{assumption}

\begin{remark}
Unbiased gradient estimators with bounded variance are often used in convergence analysis of general stochastic algorithms \cite{FLLZ, GLZ, GL, OHTG, MT}. In practical applications, many unbiased gradient estimators satisfy this assumption, and one can refer to the references \cite{GL1, MBPS, MBBF} for more details.
\end{remark}

\begin{definition}
For  $\varepsilon > 0$, we call $x_*$ an $\varepsilon$-stationary point if it satisfies
\begin{equation*}
dist(0, \nabla F(x_*) + \partial G(x_*) + \nabla H(x_*)) \leq \varepsilon,
\end{equation*}
where $\textmd{dist}(0, A)$ denotes the distance between $0$ and the set $A$.
\end{definition}

In the following, we establish the convergence analysis of STOS (Algorithm \ref{alg:STOS}) with unbiased estimators. We show that STOS convergences to an $\varepsilon$-stationary point and obtain its convergence rate. The proof of Theorem \ref{conv-unbiased} is given in Appendix \ref{app1.2}.

\begin{theorem}\label{conv-unbiased}
Suppose that Assumptions \ref{ass-fun} and \ref{ass-bv} hold.  Let  $\{( x_t, y_t, z_t) \}_{t \geq 0}$ be a sequence generated by STOS (Algorithm \ref{alg:STOS}).  Choose $\gamma > 0$ such that
\begin{equation*}
\Lambda (\gamma) := \frac{1 - \gamma (1+ l + 2\beta)}{2\gamma} - \frac{(3\gamma +2)(\gamma L^2 + 2l + 2L)}{2} > 0.
\end{equation*}
Then $z_{\tau}$, randomly returned by the algorithm for $\tau \in \{1, 2, \cdots, T \}$, satisfies
\begin{equation*}
\begin{aligned}
&\mathbb{E}_{\tau} \mathbb{E}~\textmd{dist}^2(0, \partial G(z_{\tau}) + \nabla F(z_{\tau}) + \nabla H(z_{\tau})) \\
&\leq  \frac{1 + \gamma^2(L^2 + \beta^2)}{2\gamma^2 \Lambda(\gamma)} \frac{(1+ \gamma L)^2}{T} \big(\delta_0 + \frac{3\sigma^2}{2}\big) +  \frac{\sigma^2}{2T} =\mathcal{O}(T^{-1}).
\end{aligned}
\end{equation*}
where $L$ and $\beta$ are Lipschitz constants.
\end{theorem}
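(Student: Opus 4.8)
The plan is to prove the theorem in three stages: (i) an \emph{a posteriori} stationarity estimate bounding $\mathrm{dist}^2(0,\partial G(z_{t+1})+\nabla F(z_{t+1})+\nabla H(z_{t+1}))$ by the squared step $\|x_{t+1}-x_t\|^2$ plus the gradient noise; (ii) a stochastic sufficient-decrease inequality for a Lyapunov function whose per-step drop is exactly $\Lambda(\gamma)\|x_{t+1}-x_t\|^2$; and (iii) a telescoping-plus-randomization argument that turns the summed decrease into the $\mathcal{O}(T^{-1})$ rate. For stage (i) I would start from the optimality conditions \eqref{opti-1}--\eqref{opti-2}. Condition \eqref{opti-2} exhibits $-\tfrac1\gamma\big(z_{t+1}+\gamma\wtilde H(y_{t+1})-2y_{t+1}+x_t\big)\in\partial G(z_{t+1})$, so adding $\nabla F(z_{t+1})+\nabla H(z_{t+1})$ produces an explicit element $v_{t+1}$ of the set, hence an upper bound for the distance. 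Using \eqref{algx}, which gives $z_{t+1}-y_{t+1}=x_{t+1}-x_t$ and $z_{t+1}-2y_{t+1}+x_t=x_{t+1}-y_{t+1}$, together with \eqref{opti-1} in the form $x_t-y_{t+1}=\gamma\nabla F(y_{t+1})$, I would rewrite $v_{t+1}=-\tfrac1\gamma(x_{t+1}-x_t)+(\nabla F(z_{t+1})-\nabla F(y_{t+1}))+(\nabla H(z_{t+1})-\nabla H(y_{t+1}))+e_{t+1}$, where $e_{t+1}:=\nabla H(y_{t+1})-\wtilde H(y_{t+1})$. The two gradient differences are bounded by $L\|x_{t+1}-x_t\|$ and $\beta\|x_{t+1}-x_t\|$ via Assumption \ref{ass-fun} (note $\nabla H=\tfrac1N\sum_i\nabla H_i$ is $\beta$-Lipschitz), and $\mathbb{E}\|e_{t+1}\|^2\le\sigma^2$ by Assumption \ref{ass-bv}; squaring and taking expectations then yields $\mathbb{E}\,\mathrm{dist}^2\le c_1\,\mathbb{E}\|x_{t+1}-x_t\|^2+c_2\sigma^2$ with $c_1,c_2$ the explicit functions of $\gamma,L,\beta$ in the statement.

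For stage (ii) I would introduce a merit function $\Phi_t$ assembled from $F(y_{t+1})+G(z_{t+1})+H(z_{t+1})$ and quadratic residual terms in $\|x_{t+1}-x_t\|$ and $\|x_t-x_{t-1}\|$. Combining the $\beta$-descent of $H$ across $y_{t+1}\to z_{t+1}$, the prox optimality of $z_{t+1}$ (used through the three-point identity $\|a-w\|^2-\|b-w\|^2=\|a-b\|^2+2\langle a-b,\,b-w\rangle$), the $L$-smoothness of $F$, and the semiconvexity constant $l$ (recall $F+\tfrac l2\|\cdot\|^2$ is convex), I would derive a conditional one-step inequality $\mathbb{E}_t\Phi_{t+1}\le\Phi_t-\Lambda(\gamma)\,\mathbb{E}_t\|x_{t+1}-x_t\|^2+(\text{noise})$, in which the coefficient collapses precisely to the stated $\Lambda(\gamma)$.

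The delicate point of stage (ii), and the step I expect to be the main obstacle, is the stochastic cross term $\langle x_{t+1}-x_t,\,e_{t+1}\rangle$ appearing in the descent. Because $z_{t+1}$ (hence $x_{t+1}$) depends on $e_{t+1}$ through the $z$-update, this term does not vanish directly. I would therefore compare $z_{t+1}$ with its noise-free proximal counterpart $\bar z_{t+1}=\mathcal{P}_{\gamma G}(2y_{t+1}-\gamma\nabla H(y_{t+1})-x_t)$, which is $\mathcal{F}_t$-measurable: nonexpansiveness of the prox gives $\|z_{t+1}-\bar z_{t+1}\|\le\gamma\|e_{t+1}\|$, the mean-zero part $\langle\bar z_{t+1}-y_{t+1},e_{t+1}\rangle$ is annihilated by $\mathbb{E}_t$ via unbiasedness, and the remainder $\langle z_{t+1}-\bar z_{t+1},e_{t+1}\rangle$ is absorbed into the variance. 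Carrying this bookkeeping out without any convexity of $F$ or $G$, while keeping $\Lambda(\gamma)$ positive and pinning down the constant $\tfrac32\sigma^2$, is the crux: the three-operator coupling forces the estimator noise to enter through $z_{t+1}$, and it must be disentangled by the noise-free surrogate before unbiasedness can be exploited.

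Finally, for stage (iii) I would take total expectations, telescope the decrease over $t=0,\dots,T-1$, and use that $F+G+H$ (hence $\Phi_t$) is bounded below to obtain $\tfrac1T\sum_{t}\mathbb{E}\|x_{t+1}-x_t\|^2\le\tfrac{1}{\Lambda(\gamma)\,T}\big(\delta_0+\tfrac32\sigma^2\big)$, with $\delta_0$ the initial merit gap. Substituting this into the stage-(i) estimate and averaging over $\tau$ drawn uniformly from $\{1,\dots,T\}$ (so that $\mathbb{E}_\tau[\cdot]=\tfrac1T\sum_{t=1}^{T}[\cdot]$) produces the claimed bound, where the factor $(1+\gamma L)^2$ arises from relating the residual quantity controlled by the descent to the stationarity measure at $z_\tau$ through the Lipschitz behavior of the first proximal step.
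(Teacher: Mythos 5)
Your stages (i) and (iii) reproduce the paper's argument almost verbatim: the paper builds exactly your element $v_{t+1}=\frac{1}{\gamma}(y_{t+1}-z_{t+1})+(\nabla F(z_{t+1})-\nabla F(y_{t+1}))+(\nabla H(z_{t+1})-\nabla H(y_{t+1}))+(\nabla H(y_{t+1})-\wtilde H(y_{t+1}))$ of $\partial G(z_{t+1})+\nabla F(z_{t+1})+\nabla H(z_{t+1})$ from \eqref{opti-1}--\eqref{opti-2}, transfers $\|z_{t+1}-y_{t+1}\|=\|x_{t+1}-x_t\|$ to $(1+\gamma L)^2\|y_{t+1}-y_t\|^2$ via Lemma \ref{est-yz}, telescopes a merit gap $\delta_t$, and averages over a uniform $\tau$. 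But your stage (ii) ``crux'' would fail as stated. You propose to decorrelate the noise $e_{t+1}$ from $z_{t+1}$ by comparing with the noise-free prox point $\bar z_{t+1}$ and invoking nonexpansiveness of $\mathrm{prox}_{\gamma G}$ to get $\|z_{t+1}-\bar z_{t+1}\|\le\gamma\|e_{t+1}\|$. That inequality requires convexity (or at least weak convexity plus small $\gamma$) of $G$, whereas Assumption \ref{ass-fun}(a2) only gives a proper closed $G$ with nonempty, possibly set-valued, prox; indeed in the paper's motivating application $G=\mathcal{I}_C$ with $C=\mathbb{R}_+^n\setminus(0,\alpha)^n$ nonconvex, and the projection onto $C$ is discontinuous (it jumps at coordinates equal to $\alpha/2$), so no Lipschitz bound of the form $\|z_{t+1}-\bar z_{t+1}\|\le\gamma\|e_{t+1}\|$ holds. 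The paper sidesteps the correlation entirely by a different device: the stochastic inner product $\langle\wtilde H(y_t),z_t-y_t\rangle$ is made part of the Lyapunov function itself (Lemma \ref{lem3}), so in the one-step recursion the noise enters only through the paired difference $\langle\wtilde H(y_t)-\nabla H(y_t),z_t-y_t\rangle-\langle\wtilde H(y_{t+1})-\nabla H(y_{t+1}),z_{t+1}-y_{t+1}\rangle$ and squared-error terms, all handled by Cauchy--Schwarz and the variance bound, with no conditional-expectation cancellation and no prox regularity needed.

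A second concrete omission: your telescoping cannot yield the stated constant $\delta_0+\frac{3\sigma^2}{2}$ without specifying the mini-batch size. With per-sample variance $\sigma^2$, summing $T$ per-iteration noise contributions of order $\sigma^2$ leaves a variance floor $\mathcal{O}(\sigma^2)$ rather than $\mathcal{O}(T^{-1})$. The paper uses $\mathbb{E}\|\wtilde H(y_t)-\nabla H(y_t)\|^2\le\sigma^2/b$ for a batch of size $b$ (citing Lemma 3 of \cite{YMS}) and then chooses $b=T$, so that the accumulated noise over $T$ iterations totals $\frac{3\sigma^2}{2}$ and the residual term is $\frac{\sigma^2}{2T}$. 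Your write-up asserts the bound $\frac{1}{\Lambda(\gamma)T}\big(\delta_0+\frac{3}{2}\sigma^2\big)$ directly from $\mathbb{E}\|e_{t+1}\|^2\le\sigma^2$, which implicitly assumes this batch-size scaling without stating it; as written, the sum of your noise terms grows linearly in $T$ and the claimed rate does not follow.
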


\begin{remark}
From the expression for $\Lambda(\gamma)$, we know $\Lambda(\gamma) \to +\infty$ as $\gamma \to 0$. Thus, given $l, L$ and $\beta$, it is always true that $\Lambda(\gamma) > 0$ for $\gamma$ small sufficiently. Indeed, we can get a computable thresholding for $\gamma$:
\begin{equation*}
0 < \gamma < \min \Big\{ \frac{1}{L}, \frac{6 L^2 + (2 \beta + 5l + 10)L + 6l}{2L} \Big\}.
\end{equation*}
\end{remark}

\subsection{Convergence rates for variance-reduced gradient estimator}
We have provided the convergence guarantee for STOS (Algorithm \ref{alg:STOS}) when combined with unbiased gradient estimators. However, some variance-reduced gradient estimators, such as SARAH, can make the algorithm more effective for certain large-scale image processing problems \cite{BLZ}. To allow STOS to use a wider variety of stochastic gradient estimators, we further provide the convergence guarantee for STOS combined with the variance-reduced gradient estimator (Definition \ref{var-redu}) and obtain its convergence rate. The convergence analysis is based on the Kurdyka-\L ojasiewicz inequality. For this purpose, it is crucial to construct an energy function that decreases along the sequence generated by Algorithm \ref{alg:STOS}. Thus, we define 
\begin{equation}\label{Phi}
\begin{aligned}
\Phi(x,y,z,y^{\prime}) &=F(y) + G(z) + H(y) + \frac{1}{2\gamma} \| y - x \|^2 -  \frac{1}{2\gamma}\|z - x \|^2  + \langle \nabla H(y), z - y \rangle - \frac{1}{2}\| z - y \|^2 \\
&\quad + C_1 \| y - y^{\prime}\|^2
\end{aligned}
\end{equation}
for $C_1 > 0$. Denote $\Phi_t = \Phi(x_t, y_t, z_t, y_{t-1})$. Next, we give the energy function $\Psi_t$ associated with STOS (Algorithm \ref{alg:STOS}) as follows:
\begin{equation}\label{Psi}
\Psi_t =  \Phi_t - \mathbb{E}_t \| \wtilde H(y_t) - \nabla H(y_t) \|^2  +\frac{5}{2\rho} \Upsilon_t + \frac{5V_{1}\rho + 5 V_{\Upsilon}}{2\rho} \mathbb{E}_t \|y_{t} - y_{t-1} \|^2.
\end{equation}
The decreasing property of $\Psi_t$ in expectation is given in the following theorem.

\begin{theorem}\label{decrease}
Suppose that Assumption \ref{ass-fun} holds.  Let  $\{( x_t, y_t, z_t) \}_{t \geq 0}$ be a sequence generated by STOS (Algorithm \ref{alg:STOS}) using variance-reduced gradient estimators. 
Then for any $t \geq 1$, there holds
\begin{equation*}
\mathbb{E}_t [ \Psi_{t+1}  + C_1\| y_t - y_{t-1} \|^2 + \Lambda_1(\gamma) \| y_{t+1} - y_t \|^2 ] \leq \Psi_t,
\end{equation*}
where $C_1 > 0 $ is a given constant and  
\begin{equation*}
\Lambda_1(\gamma) :=\frac{1- \gamma (1+ l + 2\beta) - 2\gamma C_1}{2\gamma} - \frac{5V_{1}\rho + 5 V_{\Upsilon}}{\rho} -  \frac{3\gamma + 2}{2\gamma} ((-1 + 2\gamma l) + (1+ \gamma L)^2) - (1+\gamma L)^2.
\end{equation*}
Moreover, if $\Lambda_1(\gamma) > 0$, then $\Psi_t$ is a decreasing function and
\begin{equation*}
\sum_{t=0}^{\infty} \| y_{t+1} - y_t \|^2 < \infty, \qquad \sum_{t=0}^{\infty} \| x_{t+1} - x_t \|^2 < \infty,  \qquad \sum_{t=0}^{\infty} \| z_{t+1} - y_{t+1} \|^2 < \infty  \qquad a.s..
\end{equation*}

\end{theorem}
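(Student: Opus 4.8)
The plan is to first establish a deterministic one–step estimate for the Lyapunov function $\Phi$ in \eqref{Phi}, and then correct it for the stochastic gradient error using the variance–reduced structure of Definition \ref{var-redu}. The algebraic backbone consists of the optimality conditions \eqref{opti-1}--\eqref{opti-2}, the identity $x_{t+1}-x_t=z_{t+1}-y_{t+1}$ from \eqref{algx}, the descent lemma for $H$ (valid since $H=\frac1N\sum_i H_i$ inherits a $\beta$-Lipschitz gradient from (a3)), the weak–convexity inequality for $F$ (from (a1), $F+\frac l2\|\cdot\|^2$ is convex, and $y_{t+1}$ is the unique minimizer of a strongly convex prox when $\gamma<1/l$), and the minimality of the prox–step \eqref{algz} tested against $z_t$. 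Combining these, I would bound $\Phi_{t+1}-\Phi_t$ from above by a negative multiple of $\|y_{t+1}-y_t\|^2$ plus cross terms in $\|z_{t+1}-y_{t+1}\|^2$ and $\|x_{t+1}-x_t\|^2$; the terms $\langle\nabla H(y),z-y\rangle-\frac12\|z-y\|^2$ and $-\frac{1}{2\gamma}\|z-x\|^2$ in $\Phi$ are included precisely so that the $H$-linearization and the quadratic coupling telescope when passing from $t$ to $t+1$.

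The essential new feature is that the update \eqref{algz} uses $\wtilde H(y_{t+1})$ rather than $\nabla H(y_{t+1})$, while $\Phi$ is written with the true gradient. Writing $\wtilde H(y_{t+1})=\nabla H(y_{t+1})+(\wtilde H(y_{t+1})-\nabla H(y_{t+1}))$ in \eqref{opti-2} produces inner-product error terms of the form $\langle\wtilde H(y_{t+1})-\nabla H(y_{t+1}),\,z_{t+1}-\cdot\rangle$, which I would split by Young's inequality into a multiple of $\|\wtilde H(y_{t+1})-\nabla H(y_{t+1})\|^2$ and a small multiple of $\|z_{t+1}-y_{t+1}\|^2$ (the latter absorbed into the already-negative quadratic terms). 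Taking the conditional expectation $\mathbb{E}_t$ and using the tower property, the leading error $\mathbb{E}_t\|\wtilde H(y_{t+1})-\nabla H(y_{t+1})\|^2$ is matched against the correction $-\mathbb{E}_{t+1}\|\wtilde H(y_{t+1})-\nabla H(y_{t+1})\|^2$ carried inside $\Psi_{t+1}$, which is exactly why that summand is included in the energy \eqref{Psi}.

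The residual error is then controlled by the variance–reduced inequalities. Applying the MSE bound of Definition \ref{var-redu} (with the estimator's driving sequence identified as $\{y_t\}$) converts the remaining $\|\wtilde H(y_t)-\nabla H(y_t)\|^2$-type quantities into $\Upsilon$-terms plus multiples of $\|y_{t+1}-y_t\|^2$ and $\|y_t-y_{t-1}\|^2$; applying the geometric decay \eqref{var-geo} to the summand $\frac{5}{2\rho}\Upsilon_{t+1}$ of $\Psi_{t+1}$ yields $\frac{5}{2\rho}(1-\rho)\Upsilon_t=\frac{5}{2\rho}\Upsilon_t-\frac52\Upsilon_t$, so that the surplus $-\frac52\Upsilon_t$ dominates the positive $\Upsilon_t$ generated by the MSE bound, while the coefficient $\frac{5V_1\rho+5V_\Upsilon}{2\rho}$ on $\|y_t-y_{t-1}\|^2$ is tuned to absorb the first–order–difference cross terms arising from both \eqref{var-mse1}-type bounds and \eqref{var-geo}. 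Collecting every remaining coefficient of $\|y_{t+1}-y_t\|^2$ reproduces exactly $\Lambda_1(\gamma)$, together with the $C_1\|y_t-y_{t-1}\|^2$ appearing on the left, which closes the descent inequality.

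Finally, when $\Lambda_1(\gamma)>0$ the resulting estimate $\mathbb{E}_t[\Psi_{t+1}]\le\Psi_t-\Lambda_1(\gamma)\,\mathbb{E}_t\|y_{t+1}-y_t\|^2-C_1\|y_t-y_{t-1}\|^2$ is of Robbins--Siegmund / supermartingale type. Since $\{\Psi_t\}$ is bounded below under the standing lower–boundedness (or coercivity) of $F+G+H$, combined with the descent–lemma bound $H(y)+\langle\nabla H(y),z-y\rangle\ge H(z)-\frac\beta2\|z-y\|^2$ to handle the linearization, the supermartingale convergence theorem gives that $\Psi_t$ converges a.s. and $\sum_t\|y_{t+1}-y_t\|^2<\infty$ a.s. The remaining two sums follow directly: $\|x_{t+1}-x_t\|^2=\|z_{t+1}-y_{t+1}\|^2$ by \eqref{algx}, and equating $x_{t+1}=z_{t+1}+\gamma\nabla F(y_{t+1})=y_{t+2}+\gamma\nabla F(y_{t+2})$ (both from \eqref{opti-1}) yields $\|z_{t+1}-y_{t+1}\|\le(1+\gamma L)\|y_{t+2}-y_{t+1}\|$, transferring summability. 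The main obstacle is the bookkeeping in the first three steps: the Young constants must be chosen so that the stochastic error is cancelled against the correction in $\Psi$ with the right coefficient and every quadratic remainder collapses precisely into $\Lambda_1(\gamma)$, and the index alignment between the generic variance–reduced sequence of Definition \ref{var-redu} and the STOS iterates $\{y_t\}$ must be handled with care.
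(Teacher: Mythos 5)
Your proposal follows essentially the same route as the paper's proof: the deterministic one-step estimate (the paper's Lemma \ref{lem3}, built on the optimality conditions and the prox-minimality of \eqref{algz}), Young's splitting of the stochastic error matched against the correction term carried in $\Psi$, the MSE bound and geometric decay applied with the $\frac{5}{2\rho}$ coefficient so that the surplus $-\frac{5}{2}\Upsilon_t$ absorbs the error, the supermartingale convergence theorem (the paper's Lemma \ref{super}), and Lemma \ref{est-yz} together with \eqref{algx} to transfer summability to $\|x_{t+1}-x_t\|^2$ and $\|z_{t+1}-y_{t+1}\|^2$. If anything, you are slightly more careful than the paper in flagging the lower-boundedness of $\Psi_t$ required by the supermartingale argument, which the paper's application of Lemma \ref{super} leaves implicit.
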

\begin{remark}
We now show that choosing a suitable $\gamma$ always makes $\Lambda_1(\gamma) > 0$. From the expression for $\Lambda_1(\gamma)$ we have that $\Lambda_1(\gamma) \to + \infty$ as $\gamma \to 0$. Thus it is always true that $\Lambda_1(\gamma) >0$ when $\gamma$ is sufficiently small. Here we also provide a computable threshold for $\gamma$. Denote
\begin{equation*}
\mathcal{T}_{\gamma} = \frac{-\big(2K + (6+4L)(\frac{l}{L} +1) +8\big) + \sqrt{\big(2K + (6+4L)(\frac{l}{L} +1) +8\big)^2 + (12L + 8 L^2)}}{6L + 4 L^2},
\end{equation*}
where $K = \frac{1+l+2\beta}{2} + \frac{5V_1\rho + 5 V_{\Upsilon}}{\rho} + C_1$. Then when $0 < \gamma < \min\{\frac{1}{L}, \mathcal{T}_\gamma \}$, we have $\Lambda_1(\gamma) > 0$.
\end{remark}

The proof of Theorem \ref{decrease} is provided in Appendix \ref{app1.3}. Next, we will establish the global convergence in expectation for the whole sequence generated by Algorithm \ref{alg:STOS}. See Appendix \ref{app1.3} for details of the proof.
\begin{theorem}\label{thmKLconver}
Suppose that $F$, $G$, and $H$ are semialgebraic functions with KL exponent $\theta \in [0, 1)$, and the sequence $ \{ (x_t, y_t, z_t)\}$ is generated by Algorithm \ref{alg:STOS}. Then, either $\{(x_t, y_t, z_t)\}$ is a critical point after a finite number of iterations, or $\{ (x_t, y_t, z_t )\}_{t=0}^{\infty}$ almost surely satisfies the finite length property in expectation:
\begin{equation*}
\sum_{t=0}^{\infty} \mathbb{E} \| x_{t+1} - x_t \| < \infty, \quad  \sum_{t=0}^{\infty} \mathbb{E} \|y_{t+1} - y_t \| < \infty ~~\textmd{and}~~\sum_{t=0}^{\infty} \mathbb{E} \| z_{t+1} - z_t \| < \infty.
\end{equation*}
Moreover, there exists an integer $m$ such that for all $i > m$,
\begin{equation*}
\sum_{t=m}^{i} \mathbb{E} \| y_{t+1} - y_t \|^2 + \mathbb{E} \| y_t - y_{t-1} \|^2 \leq \frac{(2\sqrt{2}+1)\sqrt{s}}{3 \big( C+ \frac{2\sqrt{sV_{\Upsilon}}}{\rho}\big)\rho}\sqrt{\mathbb{E}\Upsilon_m} + \frac{(4\sqrt{2} + 2) \big( C+ \frac{2\sqrt{sV_{\Upsilon}}}{\rho}\big)}{3\max\{\Lambda(\gamma),C_1\}} \Delta_{m, i-1},
\end{equation*}
where $C$ and $C_1$ are both positive constants, and $\Delta_{m,n} = \varphi(\mathbb{E} [\Phi_m - \Phi_m^*]) - \varphi(\mathbb{E}[\Phi_n - \Phi_n^*])$.
\end{theorem}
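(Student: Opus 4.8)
The plan is to follow the Kurdyka--\L ojasiewicz (KL) methodology of Attouch--Bolte--Svaiter, adapted to the stochastic setting in which every quantity is controlled only in expectation and the variance-reduced estimator error has been pre-absorbed into the Lyapunov function $\Psi_t$ of \eqref{Psi}. The dichotomy in the statement is handled at the outset: if some increment vanishes, the optimality conditions \eqref{opti-1}--\eqref{opti-2} force a critical point and the process terminates; otherwise all increments are positive and the KL machinery below applies.

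First I would establish a \emph{relative-error} (subgradient) bound. Using \eqref{opti-1}--\eqref{opti-2}, I would exhibit an explicit element $w_{t+1} \in \partial \Phi(x_{t+1}, y_{t+1}, z_{t+1}, y_t)$ by differentiating $\Phi$ in \eqref{Phi} with respect to $x, y, z, y'$ and substituting the optimality conditions, obtaining
\[
\|w_{t+1}\| \le C\big( \|y_{t+1}-y_t\| + \|z_{t+1}-z_t\| + \|x_{t+1}-x_t\| + \|\wtilde H(y_{t+1}) - \nabla H(y_{t+1})\| \big).
\]
Each partial turns out to be a linear combination of consecutive increments plus the estimator error. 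The estimator error is then dominated via the first-order MSE bound \eqref{var-mse1}, which replaces it by $\Gamma_{t+1}$ plus increments; Cauchy--Schwarz gives $\Gamma_t \le \sqrt{s\,\Upsilon_t}$, and the geometric decay \eqref{var-geo} renders $\Upsilon_t$ summable. This is exactly why the terms $\tfrac{5}{2\rho}\Upsilon_t$ and the extra $\|y_t-y_{t-1}\|^2$ were inserted into $\Psi_t$.

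Second, I would combine this with the \emph{sufficient decrease} from Theorem \ref{decrease}, namely $\mathbb{E}_t[\Psi_{t+1}] + \Lambda_1(\gamma)\|y_{t+1}-y_t\|^2 + C_1\|y_t-y_{t-1}\|^2 \le \Psi_t$, and invoke the KL inequality. Since $F, G, H$ are semi-algebraic, so is $\Phi$, and Lemma \ref{klsemi} supplies a concave desingularizing function $\varphi(s)=c\,s^{1-\theta}$. Working with the deterministic averages $\mathbb{E}[\Phi_t-\Phi^*]$ and using concavity of $\varphi$ together with Jensen's inequality (to pass from $\mathbb{E}[\varphi(\cdot)]$ to $\varphi(\mathbb{E}[\cdot])$), I would derive the one-step estimate
\[
\varphi\big(\mathbb{E}[\Phi_t-\Phi^*]\big) - \varphi\big(\mathbb{E}[\Phi_{t+1}-\Phi^*]\big) \ge \frac{\mathbb{E}[\Phi_t]-\mathbb{E}[\Phi_{t+1}]}{\mathrm{dist}(0,\partial\Phi)},
\]
bound the denominator by the increments of Step 1, and apply Young's inequality $2\sqrt{ab}\le a+b$ (the source of the $\sqrt 2$ factors in the stated constants) to convert the resulting $(\text{increment})^2 \lesssim \text{increment}_{\mathrm{prev}}\cdot\Delta\varphi$ estimate into a telescoping linear recursion. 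Summing from $t=m$ to $i$ collapses the $\varphi$-terms into $\Delta_{m,i-1}$ and the residual $\Upsilon$-terms (again through \eqref{var-geo}) into the $\sqrt{\mathbb{E}\Upsilon_m}$ contribution, producing exactly the quantitative bound claimed; summability of $\mathbb{E}\|y_{t+1}-y_t\|$ follows, and that of $\mathbb{E}\|x_{t+1}-x_t\|$ and $\mathbb{E}\|z_{t+1}-z_t\|$ is inherited since $x_{t+1}-x_t = z_{t+1}-y_{t+1}$ and the optimality conditions express the $z$-increments through $y$-increments and estimator errors.

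I expect the main obstacle to be the bookkeeping in the third step: the KL inequality is a pointwise, deterministic statement, whereas we control only $\mathbb{E}_t[\Psi_{t+1}]$. Reconciling the two requires applying Jensen's inequality in the correct direction and, more delicately, ensuring that the estimator-error quantities $\Upsilon_t$ and $\Gamma_t$ are dominated \emph{simultaneously} by the decrease of $\Psi_t$ rather than by $\Phi_t$ alone -- the gap between $\Phi_t$ and $\Psi_t$ is where the variance-reduction structure must be paid for, and matching it against the $\sqrt{\mathbb{E}\Upsilon_m}$ term in the final bound is the crux of the argument.
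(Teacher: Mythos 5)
Your proposal follows essentially the same route as the paper's proof: the same Lyapunov function $\Psi_t$ from \eqref{Psi}, the subgradient bound of Lemma \ref{Thm2} with $\mathbb{E}\Gamma_t \le \sqrt{s\,\mathbb{E}\Upsilon_t}$ and the geometric decay \eqref{var-geo} telescoped into the bound ${\bf C_t}$, the KL step transferred from $\Phi_t$ to $\Psi_t$ by showing the gap term is dominated by ${\bf C_t}^{1/\theta}$ (exactly the $\Phi$-versus-$\Psi$ payment you identify as the crux), Young's inequality generating the $\sqrt{2}$ constants, and summability of the $x$- and $z$-increments inherited through Lemma \ref{est-yz} and \eqref{algx}. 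The one imprecision is that the expectation-level KL inequality cannot be obtained from the pointwise one by Jensen alone (the product of $\varphi'(\Phi(X_t)-\Phi_t^*)$ and $\mathrm{dist}(0,\partial\Phi(X_t))$ blocks that); the paper instead imports it wholesale as Theorem \ref{KL} (Lemma 4.5 of \cite{DTLDS}), reserving Jensen for $\mathbb{E}\|\cdot\| \le \sqrt{\mathbb{E}\|\cdot\|^2}$ --- but since you flagged precisely this reconciliation as the delicate point, the plan is sound.
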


Furthermore, we derive the convergence rates for the sequence $\{ (x_t, y_t, z_t )\}_{t \geq 0}$ generated by Algorithm \ref{alg:STOS} based on the KL exponent. The calculation of the KL exponent is detailed in \cite{LPex}. We omit the proof of convergence rate, which is similar to the proof of Theorem 3.7 in \cite{BLZ}.
\begin{theorem}\label{rate}
Suppose that  $2\tau \geq \beta \|A\|^2$, $0 <\sigma \leq 1$ and Assumption \ref{ass-fun} is satisfied. Moreover, suppose that $F$, $H$ and $G$ are semialgebraic functions with KL exponent $\theta \in [0, 1)$. Let $\{ (x_t, y_t, z_t) \}_{t=0}^{\infty}$ be a bounded sequence generated by STOS (Algorithm \ref{alg:STOS}) using a variance-reduced gradient estimator. Then, the following convergence rates hold almost surely:

\begin{itemize}
\item[1.] If $\theta = 0,$ then there exists an $m \in \mathbb{N}$ such that $\mathbb{E}\Phi(X_t) = \mathbb{E}\Phi(X_*)$ for all $t \geq m;$

\item[2.] If $\theta \in (0, \frac{1}{2}]$, then there exists $d_1 > 0$ and $\tau \in [ 1 -\rho, 1)$ such that $\mathbb{E}\| X_t - X_* \| \leq d_1 \tau^t$;

\item[3.] If $\theta \in (\frac{1}{2}, 1)$, then there exists a constant $d_2 > 0$ such that $\mathbb{E} \| X_t - X_* \| \leq d_2 t^{-\frac{1 -\theta}{2\theta - 1}}$.
\end{itemize}
\end{theorem}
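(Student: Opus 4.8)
The plan is to adapt the classical Kurdyka--\L ojasiewicz rate argument of Attouch--Bolte (the template of Theorem~3.7 in \cite{BLZ}) to the stochastic expectation setting, taking as the engine the finite-length estimate already established in Theorem~\ref{thmKLconver}. The three ingredients are all in hand: the sufficient-decrease inequality of Theorem~\ref{decrease} written in expectation, a relative-error (subgradient) bound showing that $\mathbb{E}\,\textmd{dist}(0,\partial\Phi_t)$ is controlled by the step sizes $\mathbb{E}\|y_t-y_{t-1}\|$ together with the residual $\mathbb{E}\Gamma_t$, and the KL inequality with desingularizing function $\varphi(s)=cs^{1-\theta}$ guaranteed by semialgebraicity (Lemma~\ref{klsemi}). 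Because every bound must hold in expectation, I would use the concavity of $\varphi$ and Jensen's inequality to pass the pointwise KL inequality to the averaged gap $\mathbb{E}[\Phi_t-\Phi_t^*]$; this is precisely the manipulation already encoded in the quantity $\Delta_{m,i-1}=\varphi(\mathbb{E}[\Phi_m-\Phi_m^*])-\varphi(\mathbb{E}[\Phi_i-\Phi_i^*])$ of Theorem~\ref{thmKLconver}, carried out under the step-size restriction ensuring $\Lambda_1(\gamma)>0$.

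The three cases then split according to the exponent. First I would treat $\theta=0$: here $\varphi(s)=cs$ has constant derivative, so the KL inequality forces $\textmd{dist}(0,\partial\Phi_t)$ to remain bounded below by a positive constant whenever $\mathbb{E}\Phi_t>\mathbb{E}\Phi_*$; since the step sizes and $\mathbb{E}\Gamma_t$ vanish (by Theorem~\ref{decrease} and property~3 of Definition~\ref{var-redu}) the subgradient bound tends to zero, and this contradiction yields $\mathbb{E}\Phi_t=\mathbb{E}\Phi_*$ after finitely many iterations. For $\theta\in(0,1)$ I would set $\Delta_t:=\mathbb{E}[\Phi_t-\Phi_*]$ and combine the KL bound $\Delta_t^{\theta}\lesssim \mathbb{E}\|y_t-y_{t-1}\|+\mathbb{E}\Gamma_t$ with the decrease $\Delta_t-\Delta_{t+1}\gtrsim\mathbb{E}\|y_{t+1}-y_t\|^2$ to produce a recursion for the tail sums $S_m:=\sum_{t\ge m}(\mathbb{E}\|y_{t+1}-y_t\|^2+\mathbb{E}\|y_t-y_{t-1}\|^2)$; the estimate $S_m\lesssim\sqrt{\mathbb{E}\Upsilon_m}+\Delta_{m,i-1}$ of Theorem~\ref{thmKLconver} is exactly this recursion. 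Inserting $\varphi(s)=cs^{1-\theta}$ and iterating in the standard way gives a geometric tail when $\theta\in(0,\tfrac12]$ and a polynomial tail $S_m\lesssim m^{-(1-\theta)/(2\theta-1)}$ when $\theta\in(\tfrac12,1)$. Passing from the squared tail to the first-power tail by the usual square-root/AM--GM step and then using $\mathbb{E}\|X_t-X_*\|\le\sum_{k\ge t}\mathbb{E}\|X_{k+1}-X_k\|$ transfers these rates to the iterates, yielding the stated $d_1\tau^t$ and $d_2 t^{-(1-\theta)/(2\theta-1)}$ bounds.

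The main obstacle I anticipate is the extra residual $\sqrt{\mathbb{E}\Upsilon_m}$ produced by the variance-reduced estimator, which has no analogue in the deterministic KL theory and must be carried through the entire recursion. By the geometric-decay property \eqref{var-geo} together with the vanishing step sizes, $\mathbb{E}\Upsilon_m$ contracts geometrically at a rate governed by $\rho$, so this term decays linearly. In the case $\theta\in(0,\tfrac12]$ this is precisely what prevents the overall rate from beating the estimator's own decay and accounts for the restriction $\tau\in[1-\rho,1)$ in the statement: the iterate rate is the slower of the intrinsic KL geometric rate and the contraction rate $1-\rho$. In the case $\theta\in(\tfrac12,1)$ the geometric residual is dominated by the polynomial term and is simply absorbed into the constant $d_2$. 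The delicate bookkeeping is therefore to keep the two competing decay mechanisms separated throughout the iteration and to verify that all constants stay finite; once that is done, the remainder is a routine specialization of the Attouch--Bolte scheme, which is why the details can be deferred to \cite{BLZ}.
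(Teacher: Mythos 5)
Your proposal takes essentially the same approach as the paper: the paper omits this proof entirely, stating it is ``similar to the proof of Theorem 3.7 in \cite{BLZ}'', and your plan --- feeding the sufficient decrease of Theorem~\ref{decrease}, the subgradient bound of Lemma~\ref{Thm2}, and the expectation-level KL inequality of Theorem~\ref{KL} into the standard Attouch--Bolte three-case recursion already packaged in Theorem~\ref{thmKLconver} --- is precisely that template. Your handling of the variance-reduction residual $\sqrt{\mathbb{E}\Upsilon_m}$, contracting geometrically at rate governed by $\rho$ and thereby explaining the restriction $\tau\in[1-\rho,1)$ in case~2, matches the mechanism of the cited argument, so I see no gap.
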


%%%%%%%%%%%%%%%%%%%%%%%%%%%%%%%%%%%%%%%%%%%%%%%%%%%%%%%%%%%%%%%%
%%%%%%%%%%%%%%%%%%%%%%%%%%%%%%%%%%%%%%%%%%%%%%%%%%%%%%%%%%%%%%%%
\section{ Numerical experiments}
\label{sec:experiments} 

In this section, we implement the STOS algorithm on FLASH proton radiotherapy with dose and dose rate constraints. All experiments are run in MATLAB R2021a on a desktop equipped with a 3.9GHz 16-core AMD processor and 64GB memory.

\subsection{FLASH proton radiotherapy with dose and dose rate optimization}
\subsubsection{Problem background}
FLASH proton radiotherapy is an emerging cutting-edge technology that can deliver ultra-high doses of radiation in an extremely short period, within a fraction of a second. Although this new treatment modality is still in the experimental stage, it has demonstrated the potential to better protect normal tissues compared to conventional radiation. In clinical practice, proton beams can be utilized to deliver this high-dose radiation, making FLASH proton radiotherapy an intriguing prospect for future cancer treatment \cite{gao2020simultaneous, lin2021sddro}.

However, the majority of research on FLASH proton radiotherapy primarily focuses on radiation therapy problems with a minimum threshold to the number
of protons delivered per spot, such as the well-known minimization monitor-unit (MMU) problem \cite{zhu2023orthogonal, cai2022minimum}. In recent developments, modeling the FLASH effect while satisfying both dose and dose rate constraints has demonstrated greater effectiveness \cite{gao2022simultaneous}. Therefore, it becomes imperative to tackle the challenge of simultaneous dose and dose rate constraints as well as the MMU in FLASH proton radiotherapy, which can be formulated as the following nonconvex-constrained optimization problem:
\begin{equation}\label{FlashRT}
\begin{aligned}
&\min_{x \in \mathbb{R}^n}~~\frac{1}{2} \big\| A x - y \big\|^2 = \sum_i^{N} \frac{1}{2}\big\| \langle  A_i,  x\rangle -y_i \big\|_2^2\\
&\textmd{s. t.} 
\begin{cases}
x \geq 0,\\
x \geq \alpha, \text{ if } x>0,\\
Q x \geq u.
\end{cases}
\end{aligned}
\end{equation}
Here, $x$ represents proton spot weights to be optimized, and $y$ is the weighted vector of objective constraint values. Let  $u = [\bm{0}_{n\times 1} ; \ {\mu_{dr}}\bm{1}_{n\times 1}]$ where $\bm{0}_{n\times 1} \in \mathbb{R}^{n\times 1}    $ with all elements equal to $0$ . $A$ represents the forward system matrix, a linear operator mapping $x$ to $y$, and $A_i$ denotes the $i$-th row of $A$. $\alpha$ is the planning minimum spot weight threshold, known as the MMU constraint.  The FLASH constraints are enforced in a region of interest (ROI), we use $B \in \mathbb{R}^{ p \times n}$ to denote the partial forward system matrix which only maps $x$ to the dose in the ROI ($y_{roi}$). The ROI, as defined in \cite{gao2022simultaneous} is much smaller than the combined regions of CTV, body, and OARs.  Consequently, the size of $B$ exhibits significantly smaller dimensions compared to the size of $A$. Here, $Q$ can be explicitly written as the following block matrix:
%$\tilde{x} = [x; x]$, which is formed by stacking two $x$ vectors. Also, let  , where $\bm{\mu_{dr}}$ is a vector with all elements equal to $\mu_{dr}$
\begin{equation*}
\begin{bmatrix}
\frac{\alpha}{t}(B \circ B) & B\\ B & {\bf 0}_{p\times n } 
\end{bmatrix}
\begin{bmatrix}
 { I}_{n\times n } \\  { I}_{n\times n } 
\end{bmatrix}.
\end{equation*}
This shows that the constraint $Q {x} \geq u $ is equivalent to two constraints, one regarding dose rate: $(B \circ B) (\frac{\alpha}{t} x) -  \mu_{dr}   B x \geq 0$ with $ \mu_{dr}$ being the lower bound of dose rate in ROI and the other regarding dose: $B x - \mu_{d} \geq 0$ with $ \mu_{d}$ being the lower bound of dose in ROI. Here ``$\circ$'' represents dot product, ``$\geq$'' applies to each element, and $t$ is minimum spot duration. Additional physical interpretation of these two constraints regarding the FLASH effect can be found in \cite{gao2022simultaneous}.

\subsubsection{Model and algorithm}
Based on our knowledge, all of the literature uses an nonstandard ADMM algorithm to solve the convex relaxation of problem \eqref{FlashRT}. The technique of transforming constrained optimization problems into unconstrained ones using indicator functions is a commonly employed approach in the field of optimization. Here, we apply this technique to transform problem \eqref{FlashRT} into the following equivalent unconstrained optimization problem:
\begin{equation}\label{FRT-un}
\min_{x \in \mathbb{R}^n } H(x)  +\mathcal{I}_{C }(x) + \mathcal{I}_{D }(x),
\end{equation}
where we denote the set $C=\big\{ x \in \mathbb{R}^n | x_i \in \{0\}\bigcup [ \alpha, +\infty), ~\forall~ i=1,2, \dots, n \big\} =  \mathbb{R}_+^n/(0,\alpha)^n $, the set $D= \big\{  x \in \mathbb{R}^n | Q x \geq u \big\}$. Here $\mathcal{I}$ represents the indicator function of the set, defined as
\begin{equation*}
\mathcal{I}_{D}(x)= \begin{cases} 0, \qquad ~~~~~\textmd{if}~x \in D,\\ +\infty, \qquad ~\textmd{if}~x \notin D.  \end{cases}
\end{equation*}
We note that problem \eqref{FRT-un} can be solved by the classical three-operator splitting algorithm. Furthermore, since $N$ is typically on the order of millions in FLASH proton therapy optimization, our proposed STOS algorithm can effectively solve \eqref{FRT-un}. 

We apply the STOS method to solve \eqref{FRT-un} with $H(x) = \sum_i^{N} \frac{1}{2}\big\| \langle  A_i,  x\rangle -y_i \big\|_2^2$, $G(x) = \mathcal{I}_{C} (x)$, and $F(x) = \mathcal{I}_{D}(x)$. However, we note that in our convergence analysis, we require the function $F$ to be smooth. Therefore, we approximate the indicator function $\mathcal{I}_{D}(x)$ using the distance function $\textmd{dist}_{D}(x)= \min_{z \in D} \frac{\lambda}{2}\|y - z \|^2$. This is mainly because for the first subproblem in Algorithm \ref{alg:STOS}:
\begin{equation}\label{eq:subprob1}
\arg\min_{y} \Big\{ \textmd{dist}_{D} (y) + \frac{1}{2\gamma} \| y - x_t \|^2 \Big\} = \frac{1}{1 + \gamma \lambda} \Big( x_t + \gamma\lambda \mathcal{P}_{D}(x_t) \Big).
\end{equation}
Here, $\mathcal{P}_{D}$ is the projection operator onto set $D$. It is worth noting that in \eqref{eq:subprob1}, as the $\gamma \lambda \to \infty$, it approaches to $\mathcal{P}_D(x_t)$, which is the solution of the first subproblem in Algorithm \ref{alg:STOS} when $F=\mathcal{I}_D$. Then we can apply STOS  to solve \eqref{FRT-un} with $H(x) = \sum_i^{N} \frac{1}{2}\big\| \langle  A_i,  x\rangle -y_i \big\|_2^2$, $G(x) = \mathcal{I}_{C} (x)$, and $F(x) = \textmd{dist}_{D}(x)$, it yields the following algorithm:
\begin{equation*}
\begin{cases}
y_{t+1} = \arg\min_{y}  \textmd{dist}_{D}(x) + \frac{1}{2\gamma} \| y - x_t \|^2, \\\
z_{t+1} = \mathcal{P}_{C} (2 y_{t+1} - \gamma \wtilde H(y_{t+1}) - x_t),\\
x_{t+1} = x_t +(z_{t+1} - y_{t+1}).
\end{cases}
\end{equation*}
For the first subproblem, due to the smoothness of $\dist_{D}(x)$, we can directly compute the gradient and further utilize a quasi-Newton method in MATLAB to solve it. The second subproblem is easy to solve, and it has an explicit solution:
\begin{equation*}
\big[ \mathcal{P}_{C}(x) \big]_{i} = \begin{cases} x_i, ~~~~ \textmd{if} ~~ x_i \geq \frac{\alpha}{2},\\  0, \quad \textmd{otherwise}. \end{cases}
\end{equation*}
Further, we verify that $F$, $G$, and $H$ satisfy the assumptions of a semi-algebraic function. According to \cite[Section 4.3]{ABRS}, $H(x)$ and $F(x)$ are semi-algebraic functions. From Appendix of \cite{BST}, the set $C$ is a semi-algebraic set, therefore the indicator function of $C$ is also semi-algebraic.

Next, we evaluate the STOS algorithm in two different cases separately, i.e., FLASH proton RT for lung and brain,  and we also compared the nonstandard ADMM in \cite{gao2020simultaneous} in which the nonlinearity of the constraint is linearized, and in all experiments, the minimum spot weight threshold $\alpha$ are fixed as \cite{gao2020simultaneous} suggested (especially 160 for our experiments). Before presenting the results, we shall introduce several quality measures designed to assess the effectiveness and accuracy of therapy planning. One of these key metrics is the Conformal Index, denoted as $(\mathrm{CI})$, which is defined as $V_{100}^2 / \left(V \times V_{100}^{\prime} \right)$, where $V_{100}$ represents the PTV volume receiving at least $0\%$ of the prescription dose, $V$ represents the volume of PTV, and $V^{\prime}_{100}$ represents the total volume receiving at least $100\%$ of the prescription dose. Therefore, the value of $\mathrm{CI}$ ranges from 0 to 1, and ideally, $\mathrm{CI}=1$. $D_{\max}$, $D_{\max}^{cord}$, and $D_{\max}^{lung}$ represent the maximum dose received by CTV(Clinical Target Volume), cord, and lung, respectively. $D_{mean}^{roi}$, $D_{mean}^{esophagus}$, and $D_{mean}^{body}$ represent the average dose received by ROI, esophagus, and the body, respectively. The quantitative dose coverage and dose coverage in percentage are calculated as $\frac{\#\{w_d>0\}}{\#\{w_d\}}\times 100$ and  $\frac{\#\{w_{dr}>0\}}{\#\{w_{dr}\}}\times 100$  providing an assessment of the degree to which constraints related to $w_d$ and $w_{dr}$ are met.

\subsubsection{FLASH proton radiotherapy for the lung}
Radiation therapy is a common and effective treatment method for lung cancer. The first experiment of FLASH proton radiotherapy is conducted for the lung. The dose influence matrix, denoted as $A$, is generated via MatRad \cite{wieser2017development}, with $5mm$ spot width, and $3mm$ lateral spacing on $3mm^3$ dose grid. The beam angles are empirically chosen as $\left(0^{\circ}, 120^{\circ}, 240^{\circ}\right)$. In this experiment, the regions of CTV, the body, and the organs at risk (spinal cord, lung, esophagus) are chosen for dose planning. The size of the dose influence matrix $A$ is $1323239 \times 1505$. Matrix $B$ corresponds to a portion of $A$ on ROI and has a size of $24353 \times 1505$. The data $y$ with the length $N = 1323239$ is shuffled and divided into 8 parts and 16 parts for testing our STOS algorithm. The parameter $\mu_{dr}$ is set to $40$ Gy, while $\mu_d$ is set to $8$ Gy. The parameter $\gamma$ for STOS is set to $2.5 \times 10^4$.

\begin{figure}[htbp]
  \centering
  \subfloat[Varying batch sizes ]{\includegraphics[width=0.4\textwidth]{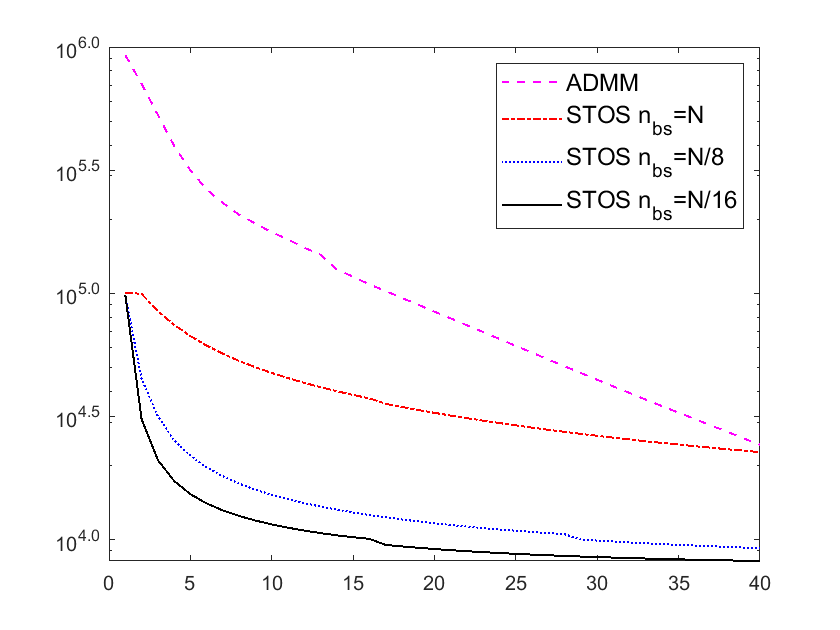}}
  \subfloat[ Varying gradient methods] {\includegraphics[width=0.4\textwidth]{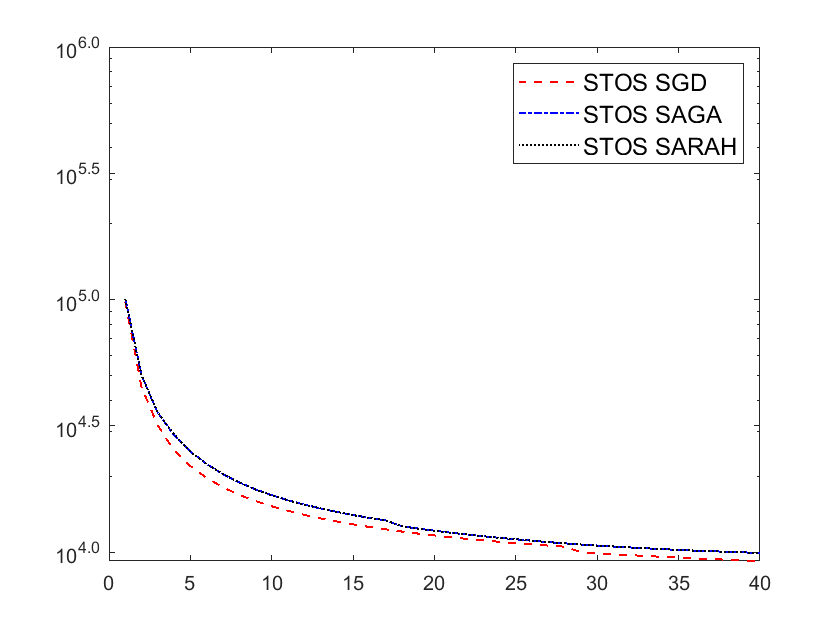}}
  \caption{Test loss for different methods with the same epochs for the lung.}
  \label{fig:energy_lung}
\end{figure}

\begin{table}[htbp]
\centering
\resizebox{\textwidth}{!}{
\begin{tabular}{l|ccccccccccc}
  \hline
    & $D_{max}$ & CI & $D_{mean}^{roi}$ & $D_{max}^{cord}$   & $D_{max}^{lung}$ & $D_{mean}^{esophagus}$ & $D_{mean}^{body}$ & $P_{d}$ & $P_{dr}$  \\ % 
  \hline
ADMM   & 109.62 & 0.56 & 1.29 & 9.85  & 18.54 & 2.57 & 1.14 &  58.80 &90.01  \\
STOS SGD $N$  & 118.51 & 0.67 & 1.19 & 7.28  & 17.49 & 1.95 & 1.00 &  67.76 &92.94\\
STOS SGD $\frac{N}{8}$ & 118.83 & 0.66 & 1.19 & 7.13  & 17.85 & 1.81 & 1.02 & 71.14 &94.16 \\
STOS SGD $\frac{N}{16}$  & 118.82 & 0.65 & 1.19 & 7.14 & 17.86 & 1.81 & 1.02 & 71.63 &94.21\\
STOS SAGA $\frac{N}{8}$ & 109.50 & 0.66 & 1.18 & 8.09 & 16.93 & 1.86 & 1.01 &  71.54 &94.17\\
STOS SARAH $\frac{N}{8}$ & 109.53 & 0.66 & 1.18 & 8.11  & 16.96 & 1.86 & 1.01  & 71.09 &94.31 \\
  \hline
\end{tabular}
}
\caption{\label{tab:dose_lung} Physical dose parameters for Lung (dose unit: Gy ) }
\end{table}

\begin{figure}[htbp]
  \centering
  \subfloat[Varying batch sizes ]
  {\includegraphics[width=0.4\textwidth]{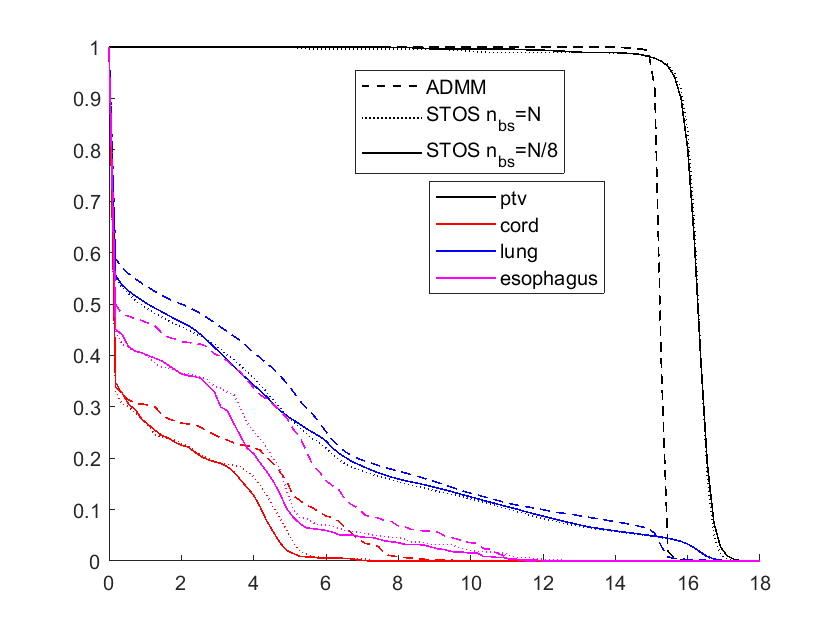}}
 % [b]
  \subfloat[Varying batch sizes ]
  {\includegraphics[width=0.4\textwidth]{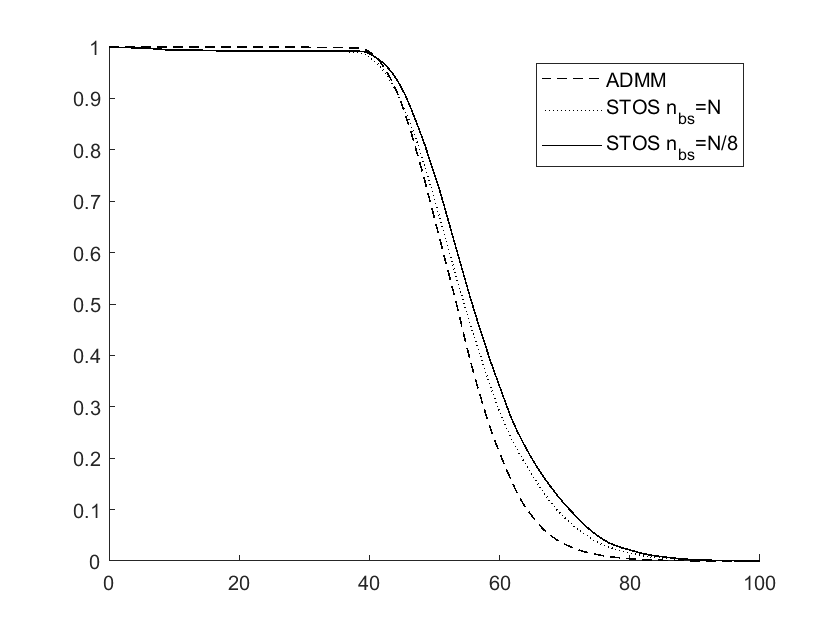}}\\
    \subfloat[Varying  gradient methods]
  {\includegraphics[width=0.4\textwidth]{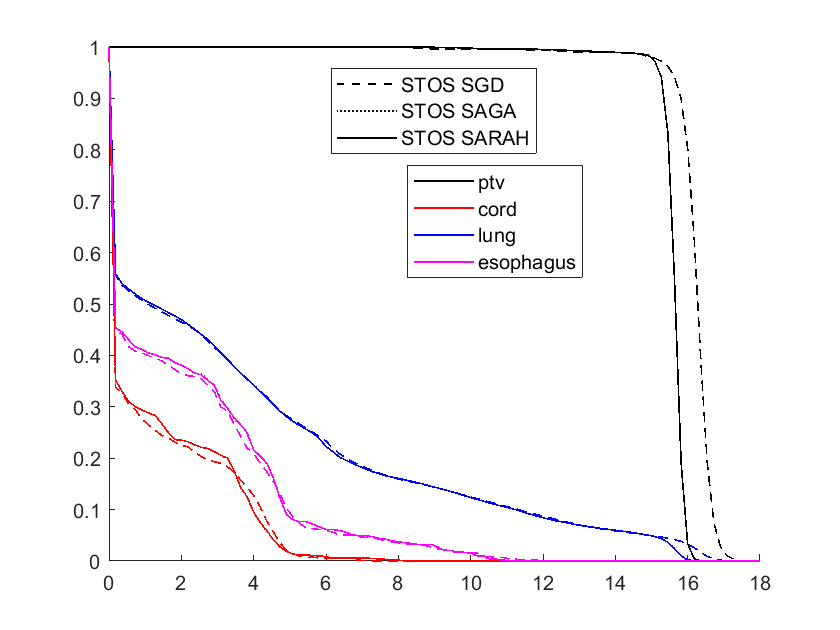}}
 % [b]
  \subfloat[Varying gradient methods]
  {\includegraphics[width=0.4\textwidth]{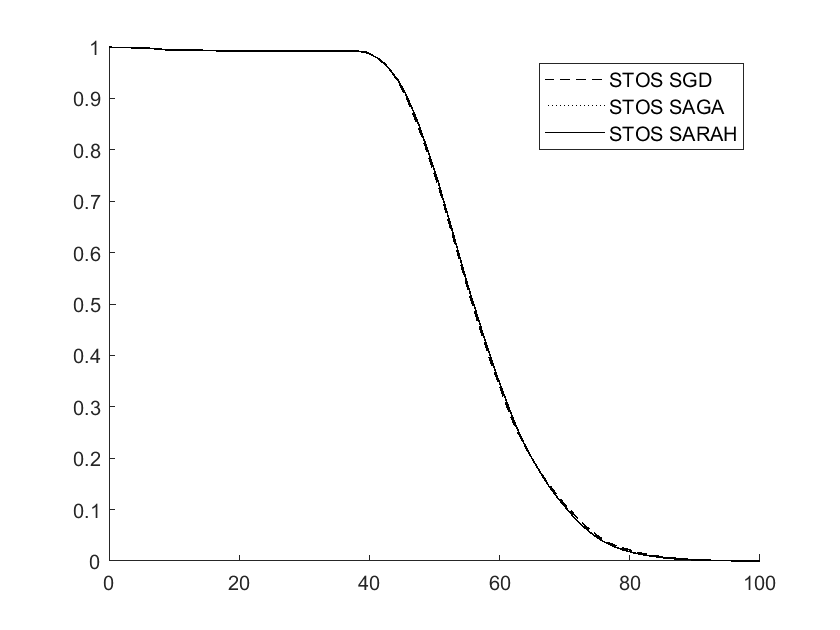}}
%  [b]
%  [b]
  \caption{Lung. Left, DVH (X-axis: dose; Y-axis: percentage); Right, DRVH (X-axis: dose rate; Y-axis: percentage). DVH and DRVH show the proposed methods with all the gradient methods improve dose rate for ROI and dose at the organ at risk (cord, lung, esophagus). }
  \label{fig:dvh_drvh_lung}
\end{figure}

\begin{figure}[htbp]
  \centering
  \begin{minipage}[b]{0.9\linewidth}
  \subfloat[ADMM]
  {\includegraphics[width=0.33\textwidth]{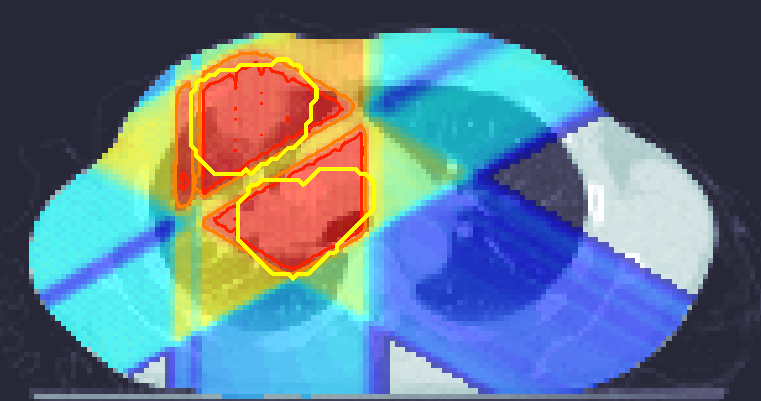}}
 % [b]
  \subfloat[STOS $n_{bs} = N$]
  {\includegraphics[width=0.33\textwidth]{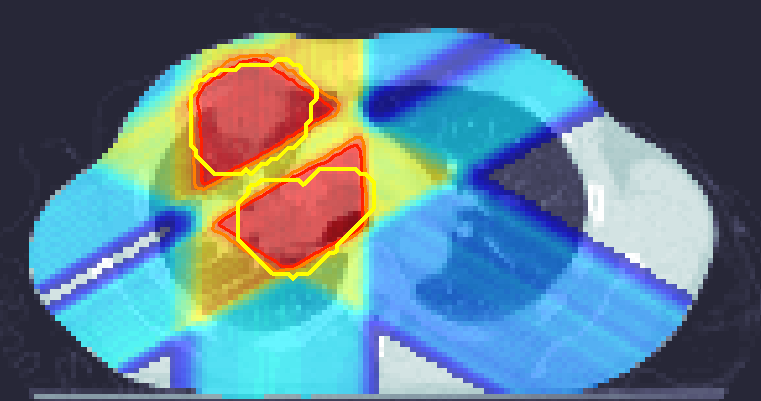}}
    \subfloat[STOS SGD $n_{bs} = \frac{N}{8}$]
  {\includegraphics[width=0.33\textwidth]{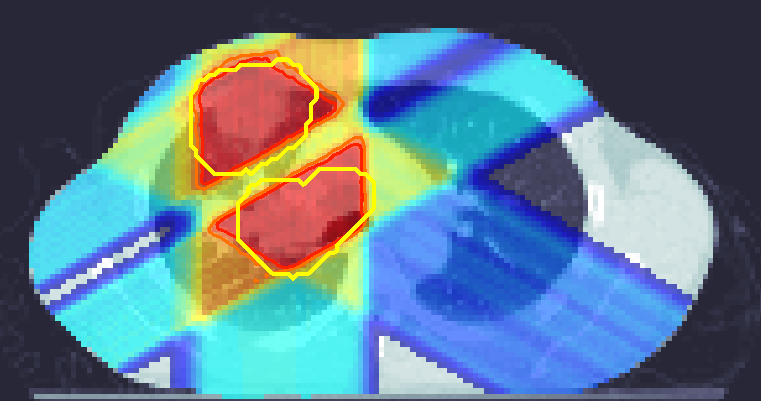}}\\
      \subfloat[STOS SGD $n_{bs} = \frac{N}{16}$]
  {\includegraphics[width=0.33\textwidth]{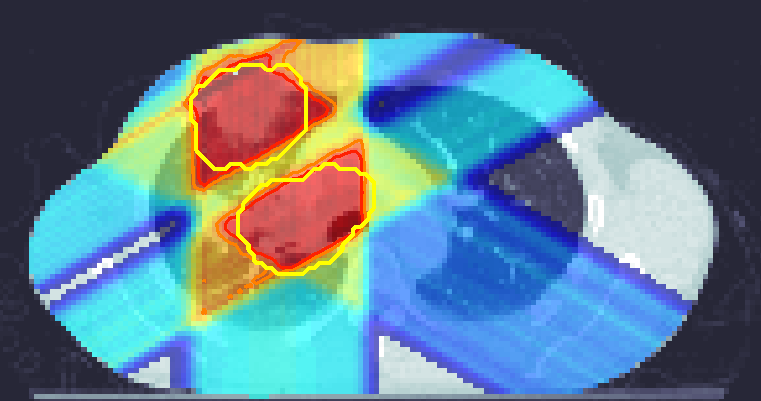}}
      \subfloat[STOS SAGA $n_{bs} = \frac{N}{8}$]
  {\includegraphics[width=0.33\textwidth]{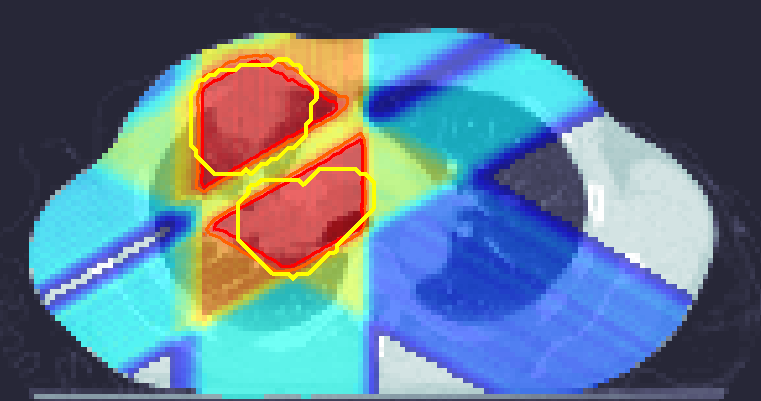}}
      \subfloat[STOS SARAH $n_{bs} = \frac{N}{8}$]
  {\includegraphics[width=0.33\textwidth]{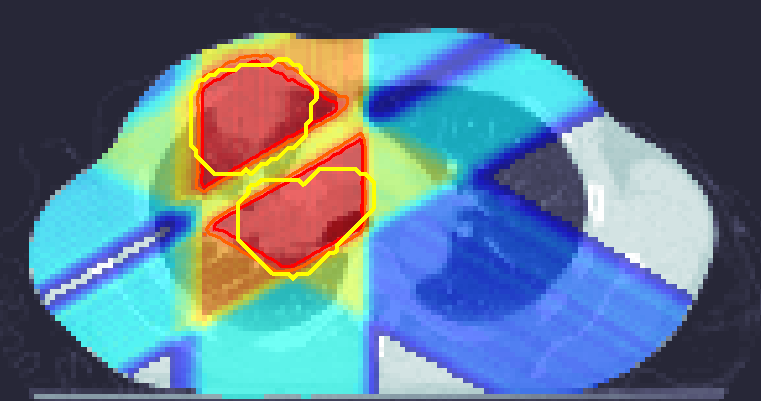}}
%  [b
  \end{minipage}
   \begin{minipage}[b]{0.08\linewidth}
   \includegraphics[width=0.4\textwidth]{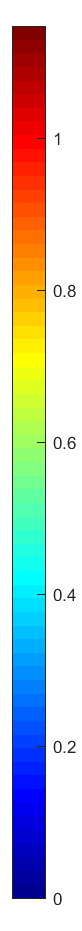}
   \end{minipage}
     \caption{FLASH-RT for lung. The dose distribution is plotted here using the window [0\%, 115\%]. 100\% isodose line, 90\% isodose line, and CTV are highlighted in the plots.}
       \label{fig:RT_lung}
\end{figure}

Here, we tested three different stochastic gradient estimators: SGD, SAGA, and SARAH. In Figure \ref{fig:energy_lung}, we first present the function values of all algorithms running the same number of epochs. From Figure \ref{fig:energy_lung} (a), it can be observed that both the STOS and TOS exhibit faster energy decay compared to ADMM. Additionally, the STOS achieves faster energy function descent compared to the TOS algorithm. In Figure \ref{fig:energy_lung} (b), we also show the results of STOS combined with SGD, SAGA, and SARAH. We can observe that the results of the three stochastic gradient estimators are similar, with SGD performing slightly better compared to the other two estimators. In summary, as shown in Figure \ref{fig:energy_lung}, the energy results indicate that the stochastic methods are superior to the non-stochastic methods for FLASH proton RT. This observation highlights the effectiveness of combining stochastic and probabilistic mechanisms in the optimization process. Moreover, decomposing large-scale problems into smaller sub-problems makes each sub-problem easier to solve, thereby further expanding the applicability of the algorithm.

To further demonstrate the effectiveness of STOS, graphical representations in the form of DVH and DVRH have been illustrated in Figure \ref{fig:dvh_drvh_lung} and values for some important indices are presented in Table \ref{tab:dose_lung}. All the results demonstrate that the proposed STOS can spare more high-dose OAR regions (e.g., ROI = PTV10mm) near treatment targets than ADMM. The proposed methods achieved better dose conformality to the treatment target (e.g., CI). In Figure \ref{fig:dvh_drvh_lung} (a) and (b), we present a comparison between STOS combined with SGD and ADMM. In Figure \ref{fig:dvh_drvh_lung} (c) and (d), we provide a comparison between STOS combined with SGD, SAGA, and SARAH. In Figure \ref{fig:dvh_drvh_lung} (a), it can be observed that the treatment plans obtained by STOS combined with SGD, SAGA, and SARAH exhibit lower dose on all OARs compared to those obtained by the ADMM algorithm. Although the results in Figure \ref{fig:dvh_drvh_lung} (c) suggest that there is no significant difference between STOS combined with SGD and combined with SAGA or SARAH, the numerical results provided in  Table \ref{tab:dose_lung} indicate that STOS combined with SGD performs significantly better than SAGA and SARAH. For example, the mean FLASH of ROI by SGD reduced from 1.29 Gy to 1.19 Gy for the lung when compared to ADMM.  For $D_{max}$, STOS combined with SGD outperforms all other algorithms, such as $D_{max}^{cord}$ is significantly lower compared to other algorithms. SAGA and SARAH obtain similar results, but they achieve lower $D_{max}^{lung}$ compared to SGD and other algorithms. We also present the dose distribution in Figure \ref{fig:RT_lung}. From Figure \ref{fig:RT_lung}, we can also observe that STOS combined with SGD provides a closer fit to the target region compared to SAGA or SARAH.

\subsubsection{FLASH proton radiotherapy for the brain} 
Brain tumors are also a challenging problem in the medical field. Compared to tumors in the lungs, brain tumors present additional difficulties due to the intricate network of structures and pathways within the brain, making treatment more challenging. In this experiment, we conducted FLASH proton radiation therapy for the brain. The beam angles are empirically chosen to be  $\left(45^{\circ}, 135^{\circ}, 225^{\circ}, 315^{\circ}\right)$.  The regions of CTV, body, and the organ at risk (carotid, cranial nerve, oral cavity) are chosen for dose planning leading to the size of the dose influence matrix $A$ being $1323239\times 1505$, and the ROI is chosen for Flash effect optimization leading to the size of $B$ being $3089 \times 353 $.  The brain data with a length of $547397$ is shuffled and divided into 8 parts and 16 parts in the same way as the lung data. The $\mu_{dr} $ is set to $40$ Gy and the $\mu_d $ is set to $8$ Gy.  The parameter $\gamma $ for STOS is set to $1.0\times 10^3$.

\begin{figure}[htbp]
  \centering
  \subfloat[Varying batch sizes ]
  {\includegraphics[width=0.4\textwidth]{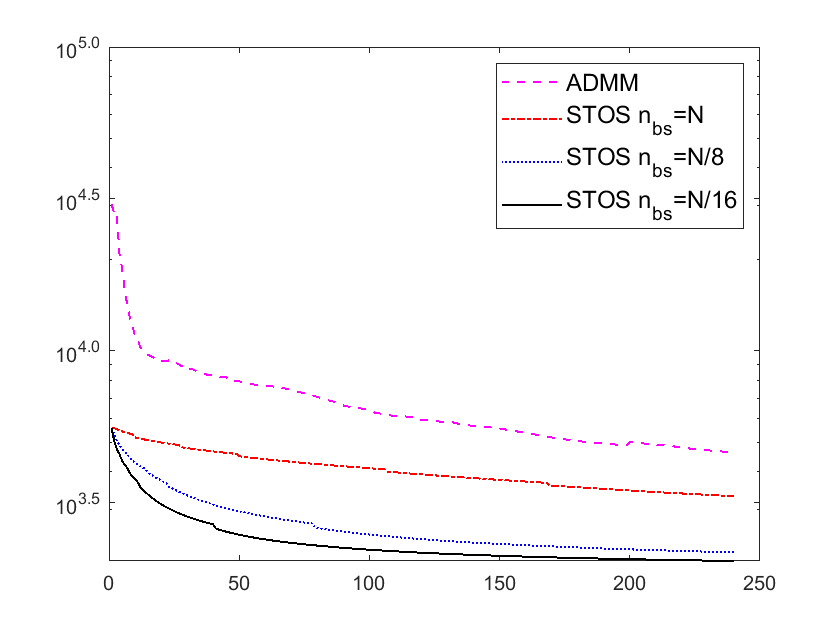}}
 % [b]     % 重点就在这，优先横向排列，自动换行
  \subfloat[ Varying gradient methods]
  {\includegraphics[width=0.4\textwidth]{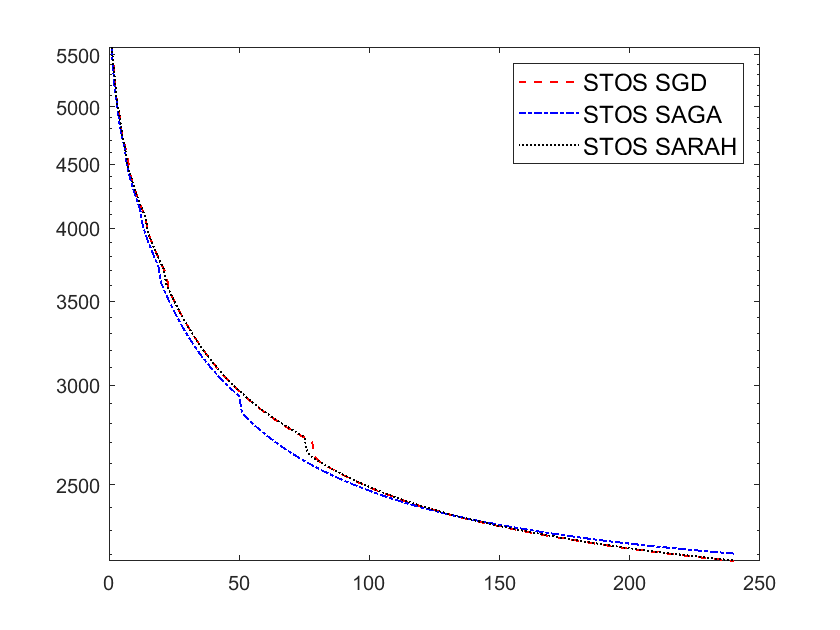}}
%  [b]
%  [b]
  \caption{ Test loss for different methods with the same epochs for the brain.}
  \label{fig:energy_brain}
\end{figure}

\begin{table}[htbp]
\centering
\resizebox{\textwidth}{!}{
\begin{tabular}{l|ccccccccc}
  \hline
    & $D_{max}$ & CI & $D_{mean}^{roi}$ & $D_{max}^{carotid}$   & $D_{max}^{craner}$ & $D_{mean}^{oral cav}$ & $D_{mean}^{body}$  & $P_{d}$ & $P_{dr}$   \\
  \hline
ADMM  &106.02 & 0.68 & 12.11 & 59.75 & 59.07  & 11.02 & 0.49 & 65.88 & 96.21  \\
STOS SGD $N$ &108.77 & 0.76 & 11.29 & 60.13 & 56.41  & 10.19 & 0.45 & 70.21 & 96.83 \\
STOS SGD $\frac{N}{8}$ &107.89 & 0.79 & 11.34 & 60.00 & 57.29  & 10.17 & 0.45 & 72.21 & 97.17  \\
STOS SGD $\frac{N}{16}$ &114.85 & 0.74 & 11.67 & 60.02 & 59.61  & 10.61 & 0.46 &72.32 & 97.66  \\
STOS SAGA $\frac{N}{8}$ &108.76 & 0.76 & 11.29 & 60.13 & 56.40  & 10.18 & 0.45 & 73.41 & 97.55  \\
STOS SARAH $\frac{N}{8}$  &110.63 & 0.76 & 12.49 & 58.35 & 53.77  & 11.94 & 0.49 & 72.93 & 97.83  \\
  \hline
\end{tabular}
}
\caption{\label{tab:dose_brain} Physical dose parameters for Brain (dose unit: Gy )}
\end{table}

The objective function values are shown in Figure \ref{fig:energy_brain} when running the same number of epochs. Here, we compare STOS combined with SGD, SAGA, SARAH, and ADMM. All the stochastic algorithms demonstrate a much faster decrease in the objective function value compared to ADMM. In Figure \ref{fig:energy_brain} (b), the comparison results between SGD, SAGA, and SARAH are displayed, indicating that for brain tumors, SAGA leads to a faster decrease in the energy function compared to SGD and SARAH. Further, we provide a graph of DVH and DVRH in Figure \ref{fig:dvh_drvh_brain}. Some important indices also be presented in Table \ref{tab:dose_brain}. All the results demonstrate that STOS combined with SGD, SAGA, and SARAH improve the dose rate to the ROI and reduce the dose to critical organs (carotid, cranial nerves, oral cavity). For instance, from Figure \ref{fig:dvh_drvh_brain}(b), it is evident that for the dose rate DVH (DRVH), STOS combined with SGD is noticeably higher than ADMM. In Figure \ref{fig:dvh_drvh_brain} (d), the results of STOS combined with SGD, SAGA, and SARAH are very similar. The numerical results in Table \ref{tab:dose_brain} also demonstrate these phenomena. For example, the $D_{max}$ (maximum dose) for SGD reaches 114.85 Gy, while ADMM achieves only 102.02 Gy. SGD significantly outperforms ADMM in this aspect. SARAH, on the other hand, reduces $D_{max}^{cranialner}$ to 53.77 Gy, which is substantially lower than that of ADMM, 59.07 Gy. Moreover, SARAH also yields much lower $D_{max}^{carotid}$ compared to ADMM. Furthermore, in Figure \ref{fig:RT_brain}, we present the results of dose distribution, which demonstrate that STOS combined with the SARAH gradient estimator achieves the best fit to the tumor region.

\begin{figure}[htbp]
  \centering
  \subfloat[Varying batch sizes ]
  {\includegraphics[width=0.4\textwidth]{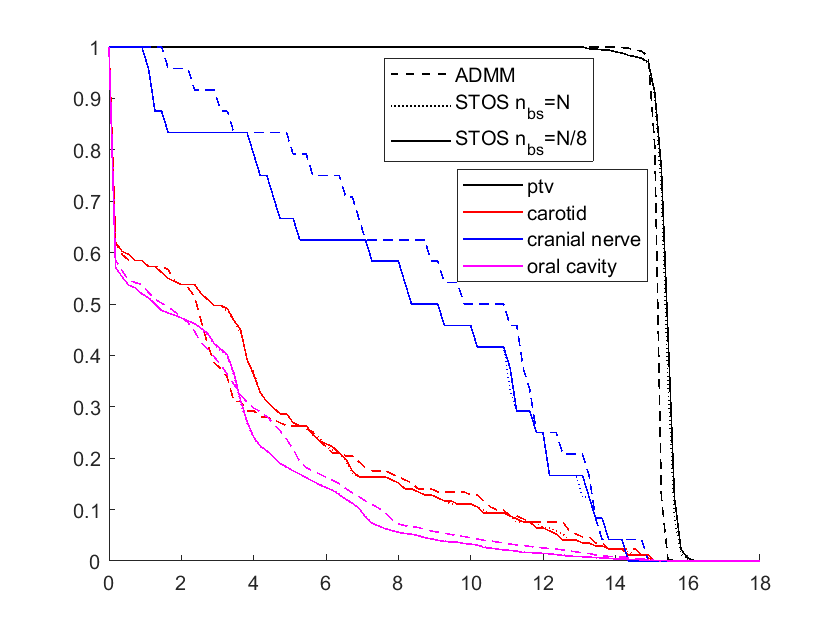}}
 % [b]
  \subfloat[Varying batch sizes ]
  {\includegraphics[width=0.4\textwidth]{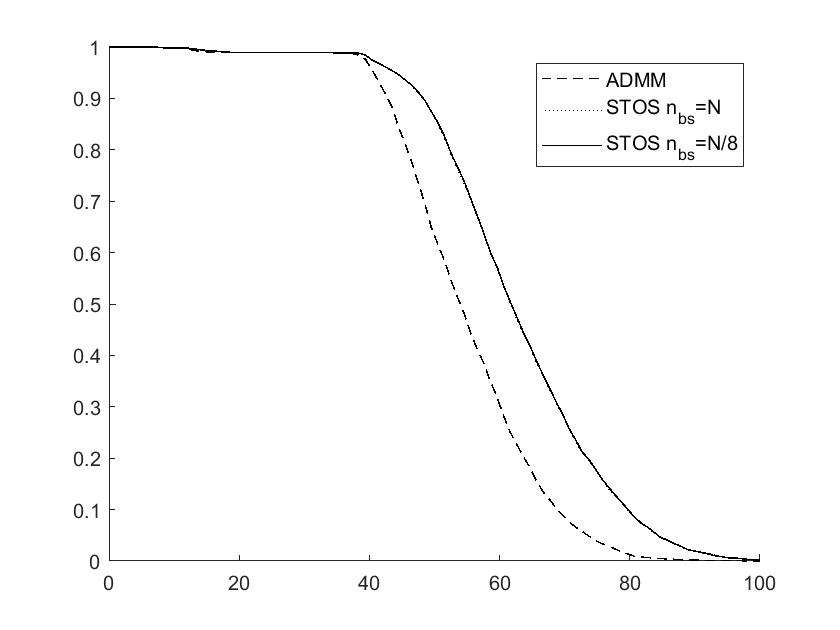}}\\
    \subfloat[Varying  gradient methods]
  {\includegraphics[width=0.4\textwidth]{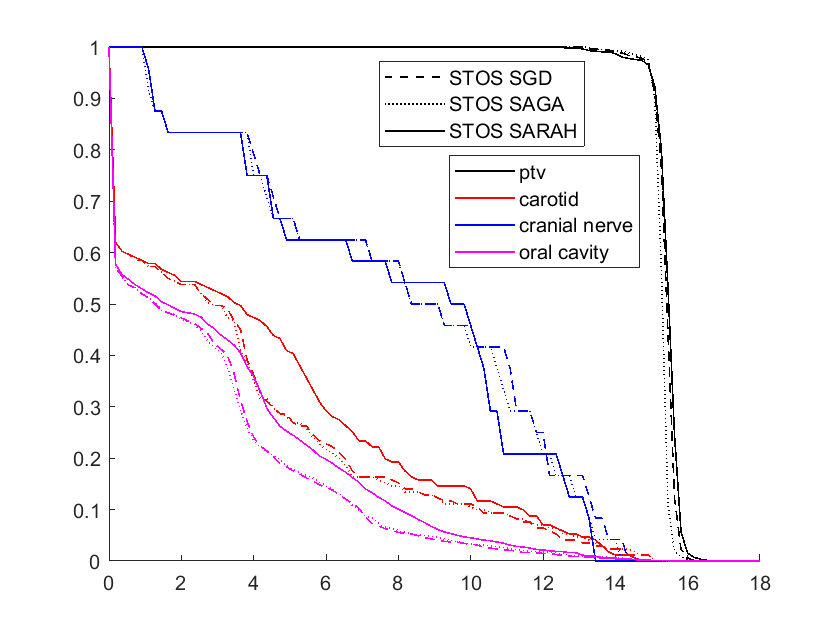}}
 % [b]
  \subfloat[Varying gradient methods]
  {\includegraphics[width=0.4\textwidth]{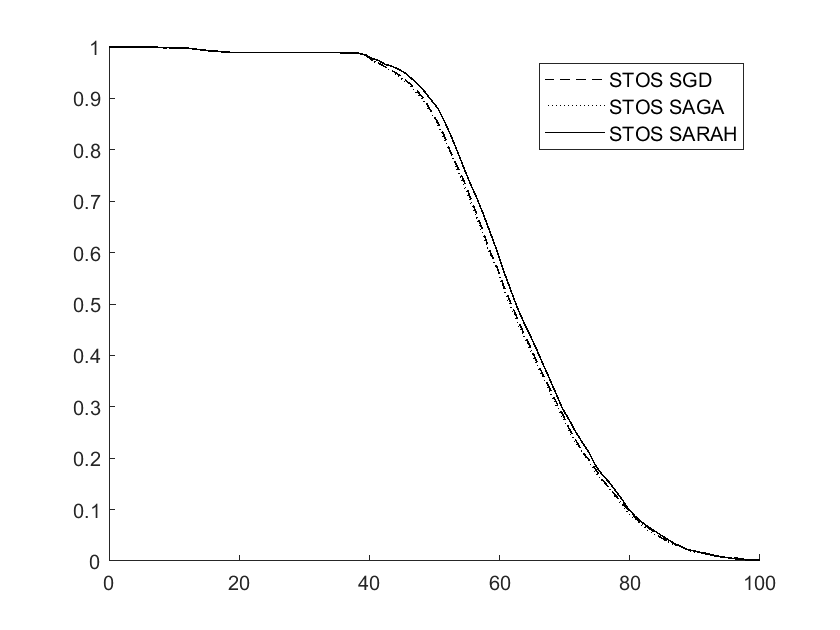}}
%  [b]
%  [b]
  \caption{Brain. Left, DVH (X-axis: dose; Y-axis: percentage); Right, DRVH (X-axis: dose rate; Y-axis: percentage). The DVH and DRVH show the proposed methods with all the gradient methods improve the dose rate for ROI and dose at the organ at risk (carotid, cranial nerve, oral cavity). }
   \label{fig:dvh_drvh_brain}
\end{figure}

\begin{figure}[htbp]
  \centering
  \begin{minipage}[b]{0.7\linewidth}
  \subfloat[ADMM]
  {\includegraphics[width=0.28\textwidth]{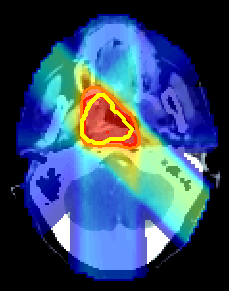}}
 % [b]
  \subfloat[STOS $n_{bs} = N$]
  {\includegraphics[width=0.28\textwidth]{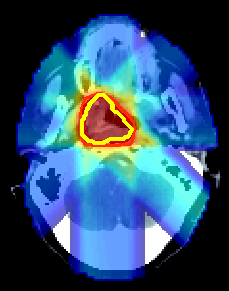}}
    \subfloat[STOS SGD $n_{bs} = \frac{N}{8}$]
  {\includegraphics[width=0.28\textwidth]{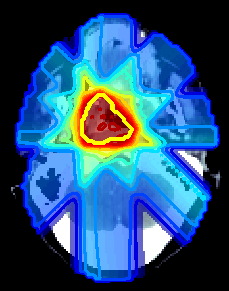}}\\
      \subfloat[STOS SGD $n_{bs} = \frac{N}{16}$]
  {\includegraphics[width=0.28\textwidth]{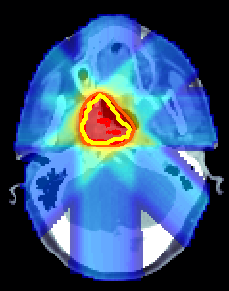}}
      \subfloat[STOS SAGA $n_{bs} = \frac{N}{8}$]
  {\includegraphics[width=0.28\textwidth]{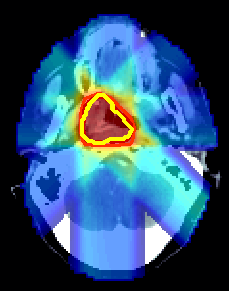}}
      \subfloat[STOS SARAH $n_{bs} = \frac{N}{8}$]
  {\includegraphics[width=0.28\textwidth]{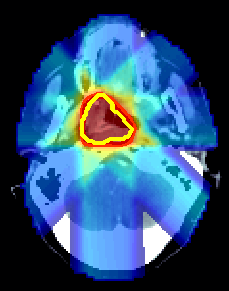}}
  \end{minipage}
   \begin{minipage}[b]{0.075\linewidth}
   \includegraphics[width=0.4\textwidth]{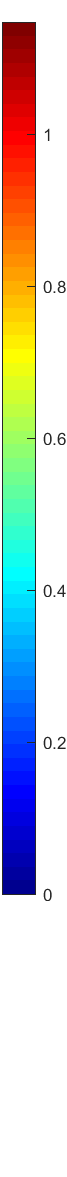}
   \end{minipage}
     \caption{FLASH-RT for the brain. The dose distribution is plotted here using the window [0\%, 115\%]. 100\% isodose line, 90\% isodose line, and CTV are highlighted in the plots.}
     \label{fig:RT_brain}
\end{figure}

\section{Conclusion}
\label{sec:conclude}
In this paper, we propose a stochastic three-operator splitting (STOS) algorithm, which combines two types of stochastic gradient estimators: unbiased gradient estimators and variance-reduced gradient estimators. We establish the convergence analysis and obtain the convergence rates for each type of gradient estimator. Furthermore, we validate the effectiveness of the STOS algorithm in the FLASH proton radiotherapy optimization. The extensive experiments demonstrate that STOS has attained state-of-the-art performance for the simultaneous dose and dose rate optimization for FLASH proton radiotherapy. The STOS algorithm has the capability to partition a large-scale problem into smaller subproblems, and the utilization of stochastic gradients leads to improved convergence.

\section*{Acknowledgments}
The work of Jian-Feng Cai was supported in part by the Hong Kong Research Grants Council GRF Grants 16310620 and 16306821, in part by the Hong Kong ITF MHP/009/20, and in part by the Project of Hetao Shenzhen-Hong Kong Science and Technology Innovation Cooperation Zone under Grant HZQB-KCZYB-2020083. Xiaoqun Zhang was supported by Shanghai Municipal Science and Technology Major Project (2021SHZDZX0102) and  NSFC (No. 12090024). Jiulong Liu was partially supported by the  Key grant of Chinese Ministry of Education (2022YFC2504302) and the Fund of the Youth Innovation Promotion Association, CAS (2022002). Fengmiao Bian was also partially supported by an outstanding Ph.D. graduate development scholarship from Shanghai Jiao Tong University.

\begin{small}
\bibliographystyle{plain}
\bibliography{reference}
\end{small}

\appendix
\section{Proofs of main theorems}
\label{app1}

\subsection{Basic Lemmas}
\label{app1.1}

We introduce some basic lemmas that are required in the convergence analysis. The proofs of Lemma \ref{lem1} and Lemma \ref{est-yz} can be found in reference \cite{BZ} and are not detailed here.

\begin{lemma}\label{lem1}
Let $a, b, c, d \in \mathbb{R}^n$. Then we have
\begin{equation*}
\begin{aligned}
&\| 2a-b-c-d\|^2 - \| a-c-d\|^2\\
&=\| a-c\|^2 -\|b-c\|^2 + 2\| a-b\|^2 + 2\langle d, b-a \rangle.
\end{aligned}
\end{equation*}
\end{lemma}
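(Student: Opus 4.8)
The plan is to prove this as a purely algebraic identity in the inner-product space, with no appeal to the structure of the algorithm or any convexity. The key observation I would exploit is that every difference of two squared norms appearing here factors through the difference-of-squares identity $\|u\|^2 - \|v\|^2 = \langle u - v,\, u + v\rangle$, and that in both the left- and right-hand sides the relevant difference vector is precisely $a - b$. This lets me avoid expanding the individual squared norms and instead collapse each side to a single inner product against $a - b$.

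First I would treat the left-hand side. Setting $u = 2a - b - c - d$ and $v = a - c - d$, I note the convenient cancellation $u - v = a - b$, while $u + v = 3a - b - 2c - 2d$. Hence $\|2a-b-c-d\|^2 - \|a-c-d\|^2 = \langle a - b,\, 3a - b - 2c - 2d\rangle$, and the left-hand side is reduced to a single bilinear term.

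Next I would rewrite the right-hand side so that $\langle a - b,\, \cdot\rangle$ can be factored out term by term: the difference $\|a-c\|^2 - \|b-c\|^2$ equals $\langle a - b,\, a + b - 2c\rangle$ by the same identity; the term $2\|a-b\|^2$ is $2\langle a - b,\, a - b\rangle$; and $2\langle d,\, b - a\rangle = -2\langle a - b,\, d\rangle$. Collecting these under the common factor gives $\langle a - b,\, (a + b - 2c) + 2(a - b) - 2d\rangle = \langle a - b,\, 3a - b - 2c - 2d\rangle$, which matches the left-hand side exactly, completing the argument.

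The \emph{hard part} is really only careful bookkeeping: the entire content is verifying that the second argument of the inner product collapses to the same vector $3a - b - 2c - 2d$ on both sides. There is no genuine analytic obstacle, since the identity holds for arbitrary $a, b, c, d$ and follows from bilinearity and symmetry of $\langle \cdot, \cdot\rangle$ alone. An alternative, more mechanical route would be to expand all four squared norms via $\|x\|^2 = \langle x, x\rangle$ and match the coefficients of each cross term; I would prefer the factoring approach above, as it makes the cancellation transparent and essentially eliminates the risk of a sign error.
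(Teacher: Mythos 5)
Your proof is correct. Both sides do collapse to $\langle a-b,\,3a-b-2c-2d\rangle$: on the left, with $u=2a-b-c-d$ and $v=a-c-d$ you indeed get $u-v=a-b$ and $u+v=3a-b-2c-2d$; on the right, $\|a-c\|^2-\|b-c\|^2=\langle a-b,\,a+b-2c\rangle$, $2\|a-b\|^2=\langle a-b,\,2a-2b\rangle$, and $2\langle d,\,b-a\rangle=\langle a-b,\,-2d\rangle$ sum to the same inner product, since $(a+b-2c)+(2a-2b)-2d=3a-b-2c-2d$. Note that the paper itself does not prove this lemma: it defers to the reference \cite{BZ}, where the verification is the mechanical one you mention as an alternative (expanding all squared norms and matching cross terms). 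Your factoring through the polarization-type identity $\|u\|^2-\|v\|^2=\langle u-v,\,u+v\rangle$ is a genuinely cleaner route for this particular identity, because the common factor $a-b$ appears in every term of both sides, so the entire check reduces to one linear computation in the second slot and the opportunity for sign errors essentially disappears; the brute-force expansion buys nothing here except independence from spotting that common factor. Either way the identity is purely bilinear and holds in any real inner-product space, as you correctly observe.
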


\begin{lemma}\label{est-yz}
Suppose that the function $F$ satisfies Assumption \ref{ass-fun} $(a1)$. Then the sequence $\{ (x_t,$ $ y_t, z_t)\}_{t \geq 1}$ generated by the Algorithm \ref{alg:STOS} satisfies
\begin{equation*}
\| x_t - x_{t-1} \| \leq (1 + \gamma L ) \| y_{t+1} - y_t \|
\end{equation*}
and 
\begin{equation*}
\| y_{t+1} - z_t \|^2 \leq \big[ (-1 + 2\gamma l ) + (1 + \gamma L)^2 \big] \| y_{t+1} - y_t \|^2.
\end{equation*}
\end{lemma}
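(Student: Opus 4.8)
The plan is to read off both bounds directly from the optimality condition \eqref{opti-1} for the $y$-subproblem together with the update \eqref{algx}, using nothing more than the $L$-Lipschitz property in Assumption \ref{ass-fun}(a1) and the weak convexity it entails.

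First I would rewrite \eqref{opti-1} as $x_t = y_{t+1} + \gamma \nabla F(y_{t+1})$, and likewise at the previous index $x_{t-1} = y_t + \gamma \nabla F(y_t)$. Subtracting gives
\begin{equation*}
x_t - x_{t-1} = (y_{t+1} - y_t) + \gamma\big(\nabla F(y_{t+1}) - \nabla F(y_t)\big).
\end{equation*}
The first inequality then follows immediately: by the triangle inequality and the $L$-Lipschitz continuity of $\nabla F$,
\begin{equation*}
\|x_t - x_{t-1}\| \le \|y_{t+1} - y_t\| + \gamma L \|y_{t+1} - y_t\| = (1 + \gamma L)\|y_{t+1} - y_t\|.
\end{equation*}

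For the second inequality, I would use the update \eqref{algx} in the shifted form $z_t = y_t + (x_t - x_{t-1})$, so that $y_{t+1} - z_t = (y_{t+1} - y_t) - (x_t - x_{t-1})$. Expanding the square,
\begin{equation*}
\|y_{t+1} - z_t\|^2 = \|y_{t+1}-y_t\|^2 - 2\langle y_{t+1}-y_t,\, x_t - x_{t-1}\rangle + \|x_t - x_{t-1}\|^2 .
\end{equation*}
The last term is controlled by the first inequality, contributing $(1+\gamma L)^2\|y_{t+1}-y_t\|^2$. For the cross term I substitute the expression for $x_t - x_{t-1}$ above and invoke that $F + \frac{l}{2}\|\cdot\|^2$ is convex, which means $\langle \nabla F(y_{t+1}) - \nabla F(y_t),\, y_{t+1}-y_t\rangle \ge -l\|y_{t+1}-y_t\|^2$; this yields $\langle y_{t+1}-y_t,\, x_t - x_{t-1}\rangle \ge (1 - \gamma l)\|y_{t+1}-y_t\|^2$. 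Collecting the three contributions gives exactly $\big[(-1+2\gamma l) + (1+\gamma L)^2\big]\|y_{t+1}-y_t\|^2$.

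The computation is essentially mechanical; the only points requiring care are bookkeeping the index shift so that $z_t$ is expressed through the update at step $t-1$, and tracking the sign convention of the weak-convexity constant $l$ (so that $F + \frac{l}{2}\|\cdot\|^2$ convex produces the lower bound $-l$ on the cross term). Getting that sign right is what makes the cross term contribute $+2\gamma l$ rather than $-2\gamma l$, and is the one place where a slip would propagate into the final constant.
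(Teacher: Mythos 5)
Your proof is correct and is essentially the argument the paper relies on (the paper defers this lemma to \cite{BZ}): reading $x_t = y_{t+1} + \gamma\nabla F(y_{t+1})$ off the optimality condition \eqref{opti-1} yields the first bound, and expanding $\|y_{t+1}-z_t\|^2$ via $z_t = y_t + (x_t - x_{t-1})$ from \eqref{algx} together with the monotonicity of $\nabla F + l\,\mathrm{Id}$ (i.e.\ $\langle \nabla F(y_{t+1})-\nabla F(y_t),\,y_{t+1}-y_t\rangle \ge -l\|y_{t+1}-y_t\|^2$) reproduces the constant $(-1+2\gamma l)+(1+\gamma L)^2$ exactly. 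Your handling of the index shift and of the sign convention for $l$ is correct, so there is nothing to fix.
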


Next, we present a crucial lemma in the convergence analysis of the STOS algorithm with both the unbiased estimator and the variance-reduced estimator.

\begin{lemma}\label{lem3}
Suppose that the  functions $F$, $G$, and $H$ satisfy Assumption \ref{ass-fun}. Let $\{ (x_t, y_t, z_t)\}_{t \geq 1}$ be a sequence generated by Algorithm \ref{alg:STOS}. Then for all $t \geq 1$, we have
\begin{equation}\label{eq11}
\begin{aligned}
&F(y_{t+1}) + G(z_{t+1}) + H(y_{t+1}) + \frac{1}{2\gamma} \| y_{t+1} - x_{t+1} \|^2 - \frac{1}{2\gamma} \| z_{t+1} - x_{t+1} \|^2 + \langle \wtilde H(y_{t+1}), z_{t+1} - y_{t+1} \rangle\\
&\leq F(y_t) + G(z_t) + H(y_t) + \frac{1}{2\gamma} \| y_t - x_t \|^2 - \frac{1}{2\gamma}\|z_t - x_t \|^2 + \langle \wtilde H(y_t), z_t - y_t \rangle +  \frac{3\gamma+2}{2\gamma} \| y_{t+1} - z_t \|^2\\
&\quad  +\frac{1}{2} \| \wtilde H(y_{t+1}) -\nabla H(y_{t+1}) \|^2 + \| \wtilde H(y_{t}) - \nabla H(y_{t}) \|^2 + \frac{\gamma(1+ l + 2\beta)-1}{2\gamma} \| y_{t+1} - y_t \|^2.
\end{aligned}
\end{equation}
\end{lemma}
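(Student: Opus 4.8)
The plan is to read \eqref{eq11} as a one-step estimate $\mathcal{E}_{t+1}\le\mathcal{E}_t+(\text{error})$ for the merit quantity $\mathcal{E}_t:=F(y_t)+G(z_t)+H(y_t)+\frac{1}{2\gamma}\|y_t-x_t\|^2-\frac{1}{2\gamma}\|z_t-x_t\|^2+\langle\wtilde H(y_t),z_t-y_t\rangle$, so that the two sides of \eqref{eq11} are exactly $\mathcal{E}_{t+1}$ and $\mathcal{E}_t$ plus the listed error terms. I would build the bound from three ingredients: the two prox subproblems via their optimality/minimization characterizations, the polarization identity of Lemma \ref{lem1}, and the descent lemma for $H$ together with Young's inequality to isolate the stochastic deviations. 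Throughout I would use the two consequences of the update \eqref{algx}, namely $\|z_{t+1}-x_{t+1}\|^2=\|y_{t+1}-x_t\|^2$ and $\|y_{t+1}-x_{t+1}\|^2=\|2y_{t+1}-z_{t+1}-x_t\|^2$, to rewrite every $x_{t+1}$-indexed quantity in terms of $x_t$.

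For the $F$-block I would exploit that $y_{t+1}$ minimizes the $(\frac1\gamma-l)$-strongly convex map $y\mapsto F(y)+\frac{1}{2\gamma}\|y-x_t\|^2$ (strong convexity holds for $\gamma<1/l$, since $F+\frac l2\|\cdot\|^2$ is convex), so comparison with $y_t$ gives $F(y_{t+1})-F(y_t)\le\frac{1}{2\gamma}\|y_t-x_t\|^2-\frac{1}{2\gamma}\|y_{t+1}-x_t\|^2-\frac{1-\gamma l}{2\gamma}\|y_{t+1}-y_t\|^2$, already supplying part of the negative $\|y_{t+1}-y_t\|^2$ weight. Since $G$ is only proper and closed I cannot use convexity, so for the $G$-block I would use the global-minimizer inequality for $z_{t+1}$ against $z=z_t$ with center $w_{t+1}=2y_{t+1}-\gamma\wtilde H(y_{t+1})-x_t$, giving $G(z_{t+1})-G(z_t)\le\frac{1}{2\gamma}\|z_t-w_{t+1}\|^2-\frac{1}{2\gamma}\|z_{t+1}-w_{t+1}\|^2$. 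Completing the square in $\|z_{t+1}-w_{t+1}\|^2$ about $P:=2y_{t+1}-z_{t+1}-x_t$ (the content of Lemma \ref{lem1}) and using $\|P\|^2=\|y_{t+1}-x_{t+1}\|^2$ cancels the merit's $+\frac{1}{2\gamma}\|y_{t+1}-x_{t+1}\|^2$, while the merit's inner product $\langle\wtilde H(y_{t+1}),z_{t+1}-y_{t+1}\rangle$ merges with the cross term $\langle P,\wtilde H(y_{t+1})\rangle$ into $\langle y_{t+1}-x_t,\wtilde H(y_{t+1})\rangle=-\gamma\langle\nabla F(y_{t+1}),\wtilde H(y_{t+1})\rangle$ by \eqref{opti-1}.

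One further cancellation drives the rest: the leftover gradient-square and cross terms — the $\|\wtilde H(y_{t+1})\|^2$ from the completion, the $\frac\gamma2\|\nabla F(y_{t+1})+\wtilde H(y_{t+1})\|^2$ from expanding $\frac{1}{2\gamma}\|z_t-w_{t+1}\|^2$ after writing $z_t-w_{t+1}=(z_t-y_{t+1})+\gamma(\nabla F(y_{t+1})+\wtilde H(y_{t+1}))$, and the $\|\nabla F(y_{t+1})\|^2$ from the $F$-bound — collapse to a single discardable $-\frac\gamma2\|\nabla F(y_{t+1})\|^2$, leaving a residual quadratic $\frac{1}{2\gamma}\|z_t-y_{t+1}\|^2$ and a residual inner product at $y_{t+1}$. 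For the $H$-block I would invoke the descent lemma (the average of the $\beta$-smooth $H_i$ is $\beta$-smooth), $H(y_{t+1})-H(y_t)\le\langle\nabla H(y_t),y_{t+1}-y_t\rangle+\frac\beta2\|y_{t+1}-y_t\|^2$, and reconcile the residual inner product at $y_{t+1}$ and the descent term at $y_t$ with the merit's $\langle\wtilde H(y_t),z_t-y_t\rangle$ by splitting each $\wtilde H$ into $\nabla H$ plus deviation, decomposing $y_{t+1}-y_t=(z_t-y_t)+(y_{t+1}-z_t)$, and using $\beta$-Lipschitzness of $\nabla H$ and Young's inequality; the deviations are charged to $\frac12\|\wtilde H(y_{t+1})-\nabla H(y_{t+1})\|^2$, $\|\wtilde H(y_t)-\nabla H(y_t)\|^2$ and to multiples of $\|y_{t+1}-z_t\|^2$ and $\|y_{t+1}-y_t\|^2$.

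I expect the main obstacle to be the coefficient bookkeeping rather than any single inequality. The weight on $\|y_{t+1}-y_t\|^2$ is fed by three sources (the prox strong convexity $-\frac{1-\gamma l}{2\gamma}$, the descent constant $\frac\beta2$, and the Young splittings of the two deviations) and must collapse to precisely $\frac{\gamma(1+l+2\beta)-1}{2\gamma}$, while the Young weights must simultaneously produce exactly $\frac12$ and $1$ on the variance terms and exactly $\frac{3\gamma+2}{2\gamma}$ on $\|y_{t+1}-z_t\|^2$. The genuinely delicate point is that the merit carries the unusual negative term $-\frac{1}{2\gamma}\|z-x\|^2$, so the residual quadratics in $z_t,x_t,y_{t+1}$ do not match coefficient-by-coefficient; they must be controlled in aggregate, using $z_t-x_t=(z_t-y_{t+1})-\gamma\nabla F(y_{t+1})$ and \eqref{opti-1} to trade the excess against the generous $\|y_{t+1}-z_t\|^2$ budget and the discardable $-\|\nabla F(y_{t+1})\|^2$ terms. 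Checking that nothing is wasted in these splittings is where the care lies.
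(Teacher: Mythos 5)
Your proposal is correct and takes essentially the same approach as the paper: the same merit quantity, the prox-comparison inequalities for the $y$- and $z$-subproblems combined through Lemma \ref{lem1} and the optimality condition \eqref{opti-1}, the descent lemma for the $\beta$-smooth $H$, and Young splittings of the deviations $\wtilde H - \nabla H$, reproducing exactly the coefficients $\frac{3\gamma+2}{2\gamma}$, $\frac{\gamma(1+l+2\beta)-1}{2\gamma}$, $\frac{1}{2}$, and $1$. The only difference is presentational: the paper imports its deterministic backbone \eqref{eq2} from \cite[Lemma 3.3]{BZ} and then performs the stochastic estimates \eqref{eq12}--\eqref{eq16}, whereas you rederive that backbone from first principles — and your key cancellations (the gradient squares collapsing to $-\frac{\gamma}{2}\|\nabla F(y_{t+1})\|^2$, and the exact trade via $z_t - x_t = (z_t - y_{t+1}) - \gamma \nabla F(y_{t+1})$, which in fact cancels cleanly rather than merely being absorbed) do check out.
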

\begin{proof}
From the proof of \cite[Lemma 3.3]{BZ}, we have
\begin{equation}\label{eq2}
\begin{aligned}
&F(y_{t+1}) + G(z_{t+1}) + \frac{1}{2\gamma} \| 2y_{t+1} - z_{t+1} - x_{t+1} - \gamma \wtilde H(y_{t+1}) \|^2 - \frac{1}{\gamma}\|y_{t+1} - z_{t+1}\|^2\\
&\quad - \frac{1}{2\gamma} \| x_{t+1} - y_{t+1} + \gamma \wtilde H(y_{t+1}) \|^2 \\
&\leq  F(y_t) + G(z_t) + \frac{1}{2\gamma} \| 2y_t - z_t -x_t - \gamma \wtilde H(y_t) \|^2 - \frac{1}{2\gamma} \| x_t - y_t + \gamma \wtilde H(y_t) \|^2 - \frac{1}{\gamma} \| y_t - z_t \|^2\\
&\quad + \langle \wtilde H(y_{t+1}), z_{t} - y_{t+1} \rangle -\langle \wtilde H(y_t), z_t - y_t \rangle + \frac{1}{\gamma} \| y_{t+1} - z_t \|^2 - \frac{1}{2} \left(\frac{1}{\gamma} -l \right)\|y_{t+1} - y_t \|^2.
\end{aligned}
\end{equation}
While the gradient $\wtilde H$ here is stochastic, the proof of the above inequality in \cite{BZ} still holds. Next, we further estimate $\langle \wtilde H(y_{t+1}), z_t - y_{t+1} \rangle - \langle \wtilde H(y_t), z_t - y_t \rangle$. Notice that
\begin{equation}\label{eq12}
\begin{aligned}
&\langle \wtilde H(y_{t+1}), z_t - y_{t+1} \rangle - \langle \wtilde H(y_t), z_t - y_t \rangle\\
&= \langle \wtilde H(y_{t+1}) - \nabla H(y_{t+1}), z_t - y_{t+1} \rangle - \langle \wtilde H(y_t)  - \nabla H(y_t), z_t - y_t \rangle + \langle \nabla H(y_t), y_t - y_{t+1} \rangle\\
&\quad + \langle \nabla H(y_{t+1}) - \nabla H(y_t) , z_t - y_{t+1} \rangle\\
&\leq \langle \wtilde H(y_{t+1}) - \nabla H(y_{t+1}), z_t - y_{t+1} \rangle - \langle \wtilde H(y_t)  - \nabla H(y_t), z_t - y_t \rangle\\
& \quad + H(y_t) - H(y_{t+1}) + \frac{\beta}{2} \| y_{t+1} - y_t \|^2 + \langle \nabla H(y_{t+1}) - \nabla H(y_t), z_t - y_{t+1} \rangle\\
&\leq  \langle \wtilde H(y_{t+1}) - \nabla H(y_{t+1}), z_t - y_{t+1} \rangle - \langle \wtilde H(y_t)  - \nabla H(y_t), z_t - y_t \rangle \\
&\quad + H(y_t) - H(y_{t+1}) + \beta\| y_{t+1} - y_t \|^2  + \frac{1}{2}\| y_{t+1} - z_t \|^2
\end{aligned}
\end{equation}
where we have used $(a3)$ in Assumption \ref{ass-fun}. Furthermore, using some basic inequalities, we get
\begin{equation}\label{eq15}
\begin{aligned}
&\langle \wtilde H(y_{t+1}) - \nabla H(y_{t+1}), z_t - y_{t+1} \rangle - \langle \wtilde H(y_t) - \nabla H(y_t), z_t - y_t \rangle\\
&= \langle \wtilde H(y_{t+1}) - \nabla H(y_{t+1}), z_t - y_{t+1} \rangle - \langle \wtilde H(y_t) - \nabla H(y_t), z_t - y_{t+1} \rangle \\
&\quad - \langle \wtilde H(y_t) - \nabla H(y_t), y_{t+1} - y_t \rangle\\
&\leq \frac{1}{2}\| \wtilde H(y_{t+1}) - \nabla H(y_{t+1}) \|^2 + \frac{1}{2} \| y_{t+1} - z_t \|^2 + \frac{1}{2} \| \wtilde H(y_{t}) - \nabla H(y_{t}) \|^2 + \frac{1}{2} \| y_{t+1} - z_t \|^2 \\
&\quad + \frac{1}{2} \| \wtilde H(y_{t}) - \nabla H(y_{t}) \|^2 + \frac{1}{2} \| y_{t+1} - y_t \|^2\\
&\leq \frac{1}{2}\| \wtilde H(y_{t+1}) - \nabla H(y_{t+1}) \|^2 + \| \wtilde H(y_{t}) - \nabla H(y_{t}) \|^2 + \| y_{t+1} - z_t \|^2 + \frac{1}{2} \| y_{t+1} - y_t \|^2.
\end{aligned}
\end{equation}
Substituting \eqref{eq15} into \eqref{eq12}, we get
\begin{equation}\label{eq16}
\begin{aligned}
&\langle \wtilde H(y_{t+1}), z_t - y_{t+1} \rangle - \langle \wtilde H(y_t), z_t - y_t \rangle \\
&\leq \frac{1}{2}\| \wtilde H(y_{t+1}) - \nabla H(y_{t+1}) \|^2 + \| \wtilde H(y_{t}) - \nabla H(y_{t}) \|^2   +  H(y_t) - H(y_{t+1}) \\
&\quad + \frac{2\beta + 1}{2} \| y_{t+1} - y_t \|^2 + \frac{3}{2}\| y_{t+1} - z_t \|^2.
\end{aligned}
\end{equation}
Combining \eqref{eq16} with \eqref{eq2},  we have
\begin{equation*}
\begin{aligned}
&F(y_{t+1}) + G(z_{t+1}) + H(y_{t+1}) + \frac{1}{2\gamma}\| 2y_{t+1} - z_{t+1} - x_{t+1} - \gamma \wtilde H(y_{t+1}) \|^2 \\
&\quad - \frac{1}{2\gamma}\| y_{t+1}  - x_{t+1} - \gamma \wtilde H(y_{t+1}) \|^2  - \frac{1}{\gamma}\| y_{t+1} - z_{t+1} \|^2\\
&\leq F(y_{t}) + G(z_{t}) + +  H(y_t) + \frac{1}{2\gamma}\| 2y_{t} - z_{t} - x_{t} - \gamma \wtilde H(y_{t}) \|^2 - \frac{1}{2\gamma}\| y_{t}  - x_{t} - \gamma \wtilde H(y_{t}) \|^2\\
&\quad - \frac{1}{\gamma} \| y_{t} - z_t \|^2 + \frac{1}{2}\| \wtilde H(y_{t+1}) - \nabla H(y_{t+1}) \|^2 + \| \wtilde H(y_{t}) - \nabla H(y_{t}) \|^2 + \frac{3\gamma+2}{2\gamma}  \| y_{t+1} - z_t \|^2\\
&\quad + \frac{\gamma(1+ l + 2\beta)-1}{2\gamma} \| y_{t+1} - y_t \|^2.
\end{aligned}
\end{equation*}
Using Lemma \ref{lem1}, we obtain
\begin{equation*}
\begin{aligned}
&F(y_{t+1}) + G(z_{t+1}) + H(y_{t+1}) + \frac{1}{2\gamma} \|y_{t+1} - x_{t+1} \|^2 - \frac{1}{2\gamma} \|z_{t+1} - x_{t+1} \|^2+ \langle \wtilde H(y_{t+1}), z_{t+1} - y_{t+1} \rangle\\
&\leq F(y_{t}) + G(z_{t}) +  H(y_t) + \frac{1}{2\gamma} \|y_t - x_t \|^2 - \frac{1}{2\gamma} \|z_t - x_t \|^2  + \langle \wtilde H(y_t), z_t - y_t \rangle +  \frac{3\gamma+2}{2\gamma} \| y_{t+1} - z_t \|^2 \\
&\quad + \frac{1}{2}\| \wtilde H(y_{t+1}) - \nabla H(y_{t+1}) \|^2 + \| \wtilde H(y_{t}) - \nabla H(y_{t}) \|^2 + \frac{\gamma(1+ l + 2\beta)-1}{2\gamma} \| y_{t+1} - y_t \|^2.
\end{aligned}
\end{equation*}
This completes the proof.
\end{proof}

\subsection{Convergence rates for unbiased gradient estimator}
\label{app1.2}
\begin{proof}[Proof of Theorem \ref{conv-unbiased}]
From Lemma \ref{lem3}, we have
\begin{equation*}
\begin{aligned}
&F(y_{t+1}) + G(z_{t+1}) + H(y_{t+1}) + \frac{1}{2\gamma} \| y_{t+1} - x_{t+1} \|^2 - \frac{1}{2\gamma} \| z_{t+1} - x_{t+1} \|^2 + \langle \wtilde H(y_{t+1}), z_{t+1} - y_{t+1} \rangle\\
&\leq F(y_t) + G(z_t) + H(y_t) + \frac{1}{2\gamma} \| y_t - x_t \|^2 - \frac{1}{2\gamma}\|z_t - x_t \|^2 + \langle \wtilde H(y_t), z_t - y_t \rangle + \frac{3\gamma +2}{2\gamma}  \| y_{t+1} - z_t \|^2\\
&\quad +\frac{1}{2} \| \wtilde H(y_{t+1}) - \nabla H(y_{t+1}) \|^2 + \| \wtilde H(y_{t}) - \nabla H(y_{t}) \|^2 + \frac{\gamma (1+ l + 2\beta)- 1}{2\gamma} \| y_{t+1} - y_t \|^2.
\end{aligned}
\end{equation*}
Since 
\begin{equation*}
\langle \wtilde H(y_t), z_t - y_t \rangle = \langle \wtilde H(y_t) - \nabla H(y_t), z_t - y_t \rangle + \langle \nabla H(y_t), z_t - y_t \rangle,
\end{equation*}
we have 
\begin{equation}\label{eq70}
\begin{aligned}
&F(y_{t+1}) + G(z_{t+1}) + H(y_{t+1}) + \frac{1}{2\gamma} \| y_{t+1} - x_{t+1} \|^2 - \frac{1}{2\gamma} \| z_{t+1} - x_{t+1} \|^2 + \langle \nabla H(y_{t+1}), z_{t+1} - y_{t+1} \rangle\\
&\quad  + \langle \wtilde H(y_{t+1}) - \nabla H(y_{t+1}), z_{t+1} - y_{t+1}\rangle\\
&\leq F(y_t) + G(z_t) + H(y_t) + \frac{1}{2\gamma}\| y_t -  x_t \|^2 - \frac{1}{2\gamma} \| z_t - x_t \|^2 + \langle \nabla H(y_t), z_t  - y_t \rangle\\
&\quad + \langle \wtilde H(y_t) - \nabla H(y_t), z_t - y_t \rangle + \frac{1}{2}\| \wtilde H(y_{t+1}) - \nabla H(y_{t+1})\|^2 + \| \wtilde H(y_t) - \nabla H(y_t) \|^2\\
&\quad + \frac{3\gamma +2}{2\gamma}  \| y_{t+1} - z_t \|^2 + \frac{\gamma (1+ l + 2\beta)- 1}{2\gamma} \| y_{t+1} - y_t \|^2.
\end{aligned}
\end{equation}
Denote 
\begin{equation*}
\Phi^{inf} = \inf_{t \geq 1} \{ F(y_t) + G(z_t) + H(y_t) + \frac{1}{2\gamma} \| y_t - x_t \|^2 -  \frac{1}{2\gamma} \| z_t - x_t \|^2 + \langle \nabla H(y_t), z_t - y_t \rangle \}
\end{equation*}
and 
\begin{equation*}
\delta_t = F(y_t) + G(z_t) + H(y_t) + \frac{1}{2\gamma}\| y_t - x_t \|^2 - \frac{1}{2\gamma} \| z_t - x_t \|^2 + \langle \nabla H(y_t), z_t - y_t \rangle - \Phi^{inf}.
\end{equation*}
Then \eqref{eq70} can be rewritten as
\begin{equation}\label{eq71}
\begin{aligned}
\delta_{t+1} &\leq \delta_t + \langle \wtilde H(y_t) - \nabla H(y_t), z_t - y_t \rangle  - \langle \wtilde H(y_{t+1}) - \nabla H(y_{t+1}), z_{t+1} -  y_{t+1} \rangle \\
&\quad + \frac{1}{2} \| \wtilde H(y_{t+1}) - \nabla H(y_{t+1}) \|^2 + \| \wtilde H(y_t) - \nabla H(y_t) \|^2 + \frac{3\gamma +2}{2\gamma}  \| y_{t+1} - z_t \|^2\\
&\quad + \frac{\gamma (1+ l + 2\beta)- 1}{2\gamma} \| y_{t+1} - y_t \|^2.
\end{aligned}
\end{equation}
Combining \eqref{eq71} with the estimate of $\| y_{t+1} - z_t \|^2$ in Lemma \ref{est-yz}, we have
\begin{equation*}
\begin{aligned}
\delta_{t+1} 
&\leq \delta_t + \langle \wtilde H(y_t) - \nabla H(y_t), z_t - y_t \rangle - \langle \wtilde H(y_{t+1}) - \nabla H(y_{t+1}), z_{t+1} - y_{t+1} \rangle\\
&\quad + \frac{1}{2} \| \wtilde H(y_{t+1}) - \nabla H(y_{t+1}) \|^2 + \| \wtilde H(y_t) - \nabla H(y_t) \|^2\\
&\quad + \Big( \frac{(3\gamma +2)(\gamma L^2 + 2l + 2L)}{2} +  \frac{\gamma (1+ l + 2\beta)- 1}{2\gamma} \Big) \|y_{t+1} - y_t \|^2.
\end{aligned}
\end{equation*}
Denote $\Lambda(\gamma)= -  \Big( \frac{(3\gamma +2)(\gamma L^2 + 2l + 2L)}{2} +  \frac{\gamma (1+ l + 2\beta)- 1}{2\gamma} \Big)$, we have
\begin{equation*}
\begin{aligned}
\delta_{t+1} + \Lambda(\gamma) \| y_{t+1} - y_t \|^2 &\leq \delta_t + \langle \wtilde H(y_t) - \nabla H(y_t), z_t - y_t \rangle - \langle \wtilde H(y_{t+1}) - \nabla H(y_{t+1}), z_{t+1} - y_{t+1} \rangle \\
&\quad + \frac{1}{2} \| \wtilde H(y_{t+1}) - \nabla H(y_{t+1}) \|^2 + \| \wtilde H(y_t) - \nabla H(y_t) \|^2.
\end{aligned}
\end{equation*}
Further, we have
\begin{equation}\label{eq74}
\begin{aligned}
\| y_{t+1} - y_t \|^2
&\leq \frac{1}{\Lambda(\gamma)} \big[ \delta_t - \delta_{t+1} + \langle \wtilde H(y_t) - \nabla H(y_t), z_t - y_t \rangle - \langle \wtilde H(y_{t+1})- \nabla H(y_{t+1}), z_{t+1} - y_{t+1} \rangle\\
&\quad  + \frac{1}{2} \| \wtilde H(y_{t+1}) - \nabla H(y_{t+1})\|^2 + \| \wtilde H(y_t) - \nabla H(y_t)\|^2 \big].
\end{aligned}
\end{equation}
By the optimality conditions \eqref{opti},  we know
\begin{equation*}
\frac{1}{\gamma} (y_{t+1} - z_{t+1}) \in \partial G(z_{t+1}) + \nabla F(y_{t+1}) + \wtilde H(y_{t+1}).
\end{equation*}
This implies that
\begin{equation*}
\begin{aligned}
&\frac{1}{\gamma} (y_{t+1} - z_{t+1}) + \nabla F(z_{t+1}) - \nabla F(y_{t+1}) + \nabla H(z_{t+1}) - \nabla H(y_{t+1}) \\
& + \nabla H(y_{t+1}) - \wtilde H(y_{t+1}) \in \partial G(z_{t+1}) + \nabla F(z_{t+1}) + \nabla H(z_{t+1}).
\end{aligned}
\end{equation*}
By Cauchy inequality, the update \eqref{algx} and Lemma \ref{est-yz}, we have
\begin{equation*}
\begin{aligned}
&\dist^2\big(0, \partial G(z_{t+1}) + \nabla F(z_{t+1}) + \nabla H(z_{t+1}) \big)\\
&\leq \frac{1}{2\gamma^2} \big\|  y_{t+1} - z_{t+1} \big\|^2 + \frac{1}{2} \big\| \nabla F(z_{t+1}) - \nabla F(y_{t+1}) \big\|^2 + \frac{1}{2} \big\| \nabla H(z_{t+1}) - \nabla H(y_{t+1}) \big\|^2\\
&\quad + \frac{1}{2} \big\| \nabla H(y_{t+1}) - \wtilde H(y_{t+1}) \big\|^2 \\
&\leq \frac{1 + \gamma^2(L^2 + \beta^2)}{2\gamma^2}  \big\| z_{t+1} - y_{t+1} \big\|^2 + \frac{1}{2} \big\| \nabla H(y_{t+1}) - \wtilde H(y_{t+1}) \big\|^2\\
&\leq  \frac{\big[1 + \gamma^2(L^2 + \beta^2)\big](1+ \gamma L)^2}{2\gamma^2}  \big\| y_{t+1} - y_{t} \big\|^2 + \frac{1}{2} \big\| \nabla H(y_{t+1}) - \wtilde H(y_{t+1}) \big\|^2.
\end{aligned}
\end{equation*}
Taking expectations on both sides (over $J_t$) and using \eqref{eq74}, we have
\begin{equation*}
\begin{aligned}
&\mathbb{E} \dist^2 \big(0, \partial G(z_{t+1})) + \nabla F(z_{t+1}) + \nabla H(z_{t+1}) \big)\\
&\leq  \frac{\big[1 + \gamma^2(L^2 + \beta^2)\big](1+ \gamma L)^2}{2\gamma^2}  \mathbb{E} \| y_{t+1} - y_{t} \|^2 + \frac{1}{2} \mathbb{E} \| \nabla H(y_{t+1}) - \wtilde H(y_{t+1}) \|^2 \\
&\leq  \frac{\big[ 1 + \gamma^2(L^2 + \beta^2) \big] (1+ \gamma L)^2 }{2\gamma^2 \Lambda(\gamma)} \big(\mathbb{E} \delta_t - \mathbb{E} \delta_{t+1} + \frac{1}{2} \mathbb{E}\| \wtilde H(y_{t+1}) - \nabla H(y_{t+1}) \|^2 \\
&\quad + \mathbb{E} \| \wtilde H(y_t) - \nabla H(y_t) \|^2 \big) + \frac{1}{2} \mathbb{E} \| \nabla H(y_{t+1}) - \wtilde H(y_{t+1}) \|^2\|.
\end{aligned}
\end{equation*}
From \cite[Lemma 3]{YMS} and Assumption \ref{ass-bv}, we know
\begin{equation*}
\mathbb{E} \| \wtilde H(y_t) - \nabla H(y_t) \| \leq \frac{\sigma^2}{b}.
\end{equation*}
Thus, we have 
\begin{equation*}
\begin{aligned}
&\mathbb{E} \dist^2 \big(0, \partial G(z_{t+1})) + \nabla F(z_{t+1}) + \nabla H(z_{t+1}) \big)\\
&\leq \frac{\big[ 1 + \gamma^2(L^2 + \beta^2) \big] (1+ \gamma L)^2 }{2\gamma^2 \Lambda(\gamma)} \big( \mathbb{E} \delta_t - \mathbb{E} \delta_{t+1} +  \frac{3}{2} \frac{\sigma^2}{b} \big)+ \frac{\sigma^2}{2 b}.
\end{aligned}
\end{equation*}
Summing from $t=0$ to $T$ and choosing $b =T$, we get
\begin{equation*}
\begin{aligned}
&\frac{1}{T} \sum_{t=0}^T  \mathbb{E}~\dist^2(0, \partial G(z_{t+1}) + \nabla F(z_{t+1}) + \nabla H(z_{t+1})) \\
&\leq  \frac{1 + \gamma^2(L^2 + \beta^2)}{2\gamma^2 \Lambda(\gamma)} \frac{(1+ \gamma L)^2}{T} \big(\delta_0 + \frac{3\sigma^2}{2}\big) +  \frac{\sigma^2}{2T}.
\end{aligned}
\end{equation*}
Take $\tau$ from $\{ 1, 2, \cdots, T\}$ randomly. Then we obtain
\begin{equation*}
\begin{aligned}
&\mathbb{E}_{\tau} \mathbb{E}~\dist^2(0, \partial G(z_{\tau}) + \nabla F(z_{\tau}) + \nabla H(z_{\tau})) \\
&\leq  \frac{1 + \gamma^2(L^2 + \beta^2)}{2\gamma^2 \Lambda(\gamma)} \frac{(1+ \gamma L)^2}{T} \big(\delta_0 + \frac{3\sigma^2}{2}\big) +  \frac{\sigma^2}{2T}\\
&=\mathcal{O}(T^{-1}).
\end{aligned}
\end{equation*}
The proof of Theorem \ref{conv-unbiased} is completed.
\end{proof}

\subsection{Convergence rates for variance-reduced gradient estimator}
\label{app1.3}
 We first recall a supermartingale convergence theorem which can be referred to \cite{D} and \cite{RS} for details.
\begin{lemma}[Supermartingale Convergence Theorem]\label{super}
Let $\mathbb{E}_t$ denote the expectation conditional on the first $t$ iterations of Algorithm \ref{alg:STOS}. Let $\{ U_t \}_{t=0}^{\infty}$ and $\{ V_t \}_{t=0}^{\infty}$ be sequences of bounded non-negative random variables such that $U_t$ and $V_t$ depend only on the first $t$ iterations of Algorithm \ref{alg:STOS}. If 
\begin{equation}\nonumber
\mathbb{E}_t U_{t+1} + V_t \leq U_t,
\end{equation}
then $\sum_{t=0}^{\infty} V_t< \infty~a.s.$ and $U_t$ converges $a.s.$.
\end{lemma}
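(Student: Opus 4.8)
The plan is to reduce the statement to the classical Doob nonnegative supermartingale convergence theorem by forming an auxiliary process that absorbs the cumulative sum of the $V_t$. Throughout, let $\mathcal{F}_t$ denote the $\sigma$-algebra generated by the first $t$ iterations of Algorithm \ref{alg:STOS}, so that by hypothesis each $U_t$ and $V_t$ is $\mathcal{F}_t$-measurable and bounded (hence integrable), and $\mathbb{E}_t[\cdot] = \mathbb{E}[\cdot \mid \mathcal{F}_t]$.

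First I would introduce the partial sums $P_t := \sum_{k=0}^{t-1} V_k$ (with $P_0 := 0$) and set $S_t := U_t + P_t$. Since every $V_k \geq 0$, the sequence $P_t$ is nondecreasing and $\mathcal{F}_t$-measurable, and $S_t \geq 0$ because both summands are nonnegative. The key computation is to verify that $\{ S_t \}$ is a supermartingale: using that $P_{t+1} = P_t + V_t$ is $\mathcal{F}_t$-measurable together with the hypothesis $\mathbb{E}_t U_{t+1} + V_t \leq U_t$, one obtains $\mathbb{E}_t S_{t+1} = \mathbb{E}_t U_{t+1} + P_t + V_t \leq (U_t - V_t) + P_t + V_t = S_t$. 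Boundedness guarantees $S_t$ is integrable, so $\{ S_t \}$ is a genuine nonnegative supermartingale.

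Next I would invoke Doob's convergence theorem for nonnegative supermartingales (the single nontrivial external input, itself provable via the upcrossing inequality), which yields an a.s.-finite limit $S_\infty$ with $S_t \to S_\infty$ almost surely. From here both conclusions follow by elementary bookkeeping. Since $S_t$ converges a.s., it is a.s. bounded in $t$; as $U_t \geq 0$ we have $P_t = S_t - U_t \leq S_t$, so the nondecreasing sequence $P_t$ is a.s. bounded above and therefore converges to a finite limit, i.e. $\sum_{t=0}^{\infty} V_t < \infty$ almost surely. Finally, writing $U_t = S_t - P_t$ exhibits $U_t$ as the difference of two a.s.-convergent sequences, so $U_t$ converges almost surely.

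The only genuinely substantive step is the appeal to Doob's supermartingale convergence theorem, which I would cite (per \cite{D, RS}) rather than reprove; the remaining difficulty is purely a matter of care, namely confirming the measurability and integrability needed to define the conditional expectations and to treat $\{ S_t \}$ as a supermartingale. Both of these are secured precisely by the assumptions that $U_t, V_t$ are bounded and depend only on the first $t$ iterations, so no further hypotheses are required.
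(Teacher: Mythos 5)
Your proposal is correct, and it is exactly the standard argument: the paper itself supplies no proof of this lemma, merely recalling it from \cite{D} and \cite{RS}, and the route you take --- absorbing the partial sums $P_t=\sum_{k=0}^{t-1}V_k$ into the nonnegative supermartingale $S_t=U_t+P_t$ and invoking Doob's convergence theorem, with boundedness of $U_t,V_t$ securing integrability --- is precisely the classical Robbins--Siegmund-type proof underlying those citations. Nothing needs fixing.
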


\begin{proof}[Proof of Theorem \ref{decrease}]
We will complete the proof of this theorem using Lemma \ref{lem3}. According to Cauchy inequality, we get the following estimate
\begin{equation}\label{eq21}
\begin{aligned}
\langle \wtilde H(y_t), z_t - y_t \rangle &= \langle \wtilde H(y_t) - \nabla H(y_t), z_t -  y_t \rangle + \langle \nabla H(y_t), z_t - y_t \rangle\\
&\leq \frac{1}{2} \| \wtilde H(y_t) - \nabla H(y_t) \|^2 + \frac{1}{2} \| z_t - y_t \|^2 + \langle \nabla H(y_t), z_t - y_t \rangle.
\end{aligned}
\end{equation}
Combining \eqref{eq21} with \eqref{eq11} in Lemma \ref{lem3}, we have
\begin{equation}\label{eq22}
\begin{aligned}
&F(y_{t+1}) + G(z_{t+1}) + H(y_{t+1}) + \frac{1}{2\gamma} \|y_{t+1} - x_{t+1} \|^2 - \frac{1}{2\gamma} \|z_{t+1} - x_{t+1} \|^2+ \langle \nabla H(y_{t+1}), z_{t+1} - y_{t+1} \rangle\\
&\quad -  \frac{1}{2} \| \wtilde H(y_{t+1}) - \nabla H(y_{t+1}) \|^2\\
&\leq F(y_{t}) + G(z_{t}) +  H(y_t) + \frac{1}{2\gamma} \|y_t - x_t \|^2 - \frac{1}{2\gamma} \|z_t - x_t \|^2  + \langle \wtilde H(y_t), z_t - y_t \rangle\\
&\quad  -  \frac{1}{2}\| \wtilde H(y_{t}) - \nabla H(y_{t}) \|^2  + \langle \nabla H(y_{t+1}) - \wtilde H(y_{t+1}), z_{t+1} - y_{t+1} \rangle + \frac{3}{2}\| \wtilde H(y_{t}) - \nabla H(y_{t}) \|^2\\
&\quad  + \frac{3\gamma + 2}{2\gamma} \| y_{t+1} - z_t \|^2+  \frac{\gamma (1+ l + 2\beta) -1 }{2\gamma} \| y_{t+1} - y_t \|^2\\
&\leq F(y_{t}) + G(z_{t}) +  H(y_t) + \frac{1}{2\gamma} \|y_t - x_t \|^2 - \frac{1}{2\gamma} \|z_t - x_t \|^2 + \frac{1}{2} \|z_t - y_t \|^2 + \langle \nabla H(y_t), z_t - y_t \rangle \\
&\quad +  \frac{1}{2}\| \wtilde H(y_{t+1}) - \nabla H(y_{t+1}) \|^2 + \frac{1}{2}\| z_{t+1} - y_{t+1} \|^2 + \frac{3}{2}\| \wtilde H(y_{t}) - \nabla H(y_{t}) \|^2  + \frac{3\gamma + 2}{2\gamma} \| y_{t+1} - z_t \|^2 \\
&\quad +  \frac{\gamma (1+ l + 2\beta) -1 }{2\gamma} \| y_{t+1} - y_t \|^2.
\end{aligned}
\end{equation}
Simplifying the equation \eqref{eq22}, we get
\begin{equation*}
\begin{aligned}
&F(y_{t+1}) + G(z_{t+1}) + H(y_{t+1}) + \frac{1}{2\gamma} \|y_{t+1} - x_{t+1} \|^2 - \frac{1}{2\gamma} \|z_{t+1} - x_{t+1} \|^2+ \langle \nabla H(y_{t+1}), z_{t+1} - y_{t+1} \rangle\\
&\leq F(y_{t}) + G(z_{t}) +  H(y_t) + \frac{1}{2\gamma} \|y_t - x_t \|^2 - \frac{1}{2\gamma} \|z_t - x_t \|^2  + \frac{1}{2}\|z_t -  y_t \|^2 + \langle \nabla H(y_t), z_t - y_t \rangle \\
&\quad + \| \wtilde H(y_{t+1}) - \nabla H(y_{t+1})\|^2 + \frac{1}{2}\| z_{t+1} - y_{t+1} \|^2 + \frac{3}{2}\| \wtilde H(y_{t}) - \nabla H(y_{t}) \|^2 + \frac{3\gamma + 2}{2\gamma} \| y_{t+1} - z_t \|^2 \\
&\quad +  \frac{\gamma (1+ l + 2\beta) -1 }{2\gamma} \| y_{t+1} - y_t \|^2.
\end{aligned}
\end{equation*}
For any $C_1 > 0$, adding $C_1 \| y_{t+1} -  y_t \|^2$ to both sides of the equation yields
\begin{equation}\label{eq24}
\begin{aligned}
&F(y_{t+1}) + G(z_{t+1}) + H(y_{t+1}) + \frac{1}{2\gamma} \|y_{t+1} - x_{t+1} \|^2 - \frac{1}{2\gamma} \|z_{t+1} - x_{t+1} \|^2+ \langle \nabla H(y_{t+1}), z_{t+1} - y_{t+1} \rangle\\
&\quad - \frac{1}{2} \| z_{t+1} - y_{t+1} \|^2 + C_1 \| y_{t+1} - y_t \|^2\\
&\leq F(y_{t}) + G(z_{t}) +  H(y_t) + \frac{1}{2\gamma} \|y_t - x_t \|^2 - \frac{1}{2\gamma} \|z_t - x_t \|^2  +  \langle \nabla H(y_t), z_t - y_t \rangle + C_1 \| y_t - y_{t-1} \|^2\\
&\quad  -\frac{1}{2}\|z_t -  y_t \|^2 + \|z_t -  y_t \|^2 + \frac{3}{2}\| \wtilde H(y_{t}) - \nabla H(y_{t}) \|^2  + \frac{3\gamma + 2}{2\gamma} \| y_{t+1} - z_t \|^2\\
&\quad + \big(\frac{\gamma (1+ l + 2\beta) -1 }{2\gamma} + C_1 \big) \| y_{t+1} - y_t \|^2 +  \| \wtilde H(y_{t+1}) - \nabla H(y_{t+1})\|^2 - C_1\| y_t - y_{t-1} \|^2.
\end{aligned}
\end{equation}
Recalling the definition of $\Phi_t$ in \eqref{Phi} and taking the conditional expectation with respect to $x_t$ on both sides of \eqref{eq24} yields
\begin{equation*}
\begin{aligned}
\mathbb{E}_t \Phi_{t+1} &\leq \Phi_t + \| z_t - y_t \|^2 + \frac{3}{2} \mathbb{E}_t \| \wtilde H(y_t) - \nabla H(y_t) \|^2 + \frac{3\gamma + 2}{2\gamma}  \mathbb{E}_t \| y_{t+1} - z_t \|^2\\
& + \frac{\gamma (1+ l + 2\beta) -1 + 2\gamma C_1}{2\gamma}  \mathbb{E}_t \| y_{t+1} - y_t \|^2 +  \mathbb{E}_t \| \wtilde H(y_{t+1}) - \nabla H(y_{t+1})\|^2 - C_1\| y_t - y_{t-1} \|^2.
\end{aligned}
\end{equation*}
This is equivalent to
\begin{equation}\label{eq26}
\begin{aligned}
&\mathbb{E}_t [ \Phi_{t+1} -  \| \wtilde H(y_{t+1}) - \nabla H(y_{t+1})\|^2 ]\\
&\leq \Phi_t - \mathbb{E}_t \| \wtilde H(y_t) - \nabla H(y_t) \|^2 + \| z_t - y_t \|^2 + \frac{5}{2} \mathbb{E}_t \| \wtilde H(y_t) - \nabla H(y_t) \|^2 + \frac{3\gamma + 2}{2\gamma}  \mathbb{E}_t \| y_{t+1} - z_t \|^2\\
&\quad + \frac{\gamma (1+ l + 2\beta) -1 + 2\gamma C_1}{2\gamma}  \mathbb{E}_t \| y_{t+1} - y_t \|^2  - C_1\| y_t - y_{t-1} \|^2.
\end{aligned}
\end{equation}
According to \eqref{var-geo}, we have
\begin{equation}\label{eq27}
\Upsilon_t \leq \frac{1}{\rho} \Upsilon_t -   \frac{1}{\rho} \mathbb{E}_t \Upsilon_{t+1} + \frac{V_{\Upsilon}}{\rho} \mathbb{E}_t \|y_{t+1} - y_t \|^2 + \frac{V_{\Upsilon}}{\rho} \| y_t -  y_{t-1} \|^2.
\end{equation}
Combining \eqref{eq27} with \eqref{eq26}, we get
\begin{equation}\label{eq28}
\begin{aligned}
&\mathbb{E}_t [ \Phi_{t+1} -  \| \wtilde H(y_{t+1}) - \nabla H(y_{t+1})\|^2 ]\\
&\leq \Phi_t - \mathbb{E}_t \| \wtilde H(y_t) - \nabla H(y_t) \|^2  +  \frac{3\gamma + 2}{2\gamma}  \mathbb{E}_t \| y_{t+1} - z_t \|^2 + \frac{\gamma (1+ l + 2\beta) -1 + 2\gamma C_1}{2\gamma} \mathbb{E}_t \| y_{t+1} - y_t \|^2 \\
&\quad +\frac{5}{2} \Upsilon_t  + \frac{5V_{1}}{2} \mathbb{E}_t \|y_{t+1} - y_t \|^2 + \frac{5V_{1}}{2} \| y_t -  y_{t-1} \|^2 + \| z_t - y_t \|^2 - C_1 \| y_t - y_{t-1}\|^2\\
&\leq \Phi_t - \mathbb{E}_t \| \wtilde H(y_t) - \nabla H(y_t) \|^2  +  \frac{3\gamma + 2}{2\gamma}  \mathbb{E}_t \| y_{t+1} - z_t \|^2 + \frac{\gamma (1+ l + 2\beta) -1 + 2\gamma C_1}{2\gamma} \mathbb{E}_t \| y_{t+1} - y_t \|^2 \\
&\quad +\frac{5}{2\rho} \Upsilon_t  - \frac{5}{2\rho} \mathbb{E}_t \Upsilon_{t+1} + \frac{5V_{\Upsilon}}{2\rho} \mathbb{E}_t \|y_{t+1} - y_t \|^2 + \frac{5V_{\Upsilon}}{2\rho} \| y_t -  y_{t-1} \|^2 + \frac{5V_{1}}{2} \mathbb{E}_t \|y_{t+1} - y_t \|^2\\
&\quad  + \frac{5V_{1}}{2} \| y_t -  y_{t-1} \|^2    + \| z_t - y_t \|^2 - C_1 \| y_t - y_{t-1}\|^2.
\end{aligned}
\end{equation}
We rearrange  \eqref{eq28} as
\begin{equation*}
\begin{aligned}
&\mathbb{E}_t [ \Phi_{t+1} -  \| \wtilde H(y_{t+1}) - \nabla H(y_{t+1})\|^2 + \frac{5}{2\rho} \Upsilon_{t+1} + \frac{5V_{1}\rho + 5 V_{\Upsilon}}{2\rho}  \|y_{t+1} - y_t \|^2 ]\\
&\leq \Phi_t - \mathbb{E}_t \| \wtilde H(y_t) - \nabla H(y_t) \|^2  +\frac{5}{2\rho} \Upsilon_t + \frac{5V_{1}\rho + 5 V_{\Upsilon}}{2\rho} \|y_{t} - y_{t-1} \|^2  - C_1 \| y_t - y_{t-1}\|^2\\
&\quad +  \frac{3\gamma + 2}{2\gamma}  \mathbb{E}_t \| y_{t+1} - z_t \|^2 + \big(  \frac{\gamma (1+ l + 2\beta) -1 + 2\gamma C_1}{2\gamma} + \frac{5V_{1}\rho + 5 V_{\Upsilon}}{\rho}  \big) \mathbb{E}_t \|y_{t+1} - y_t \|^2  + \| z_t - y_t \|^2.
\end{aligned}
\end{equation*}
Recalling the definition of $\Psi_t$ in \eqref{Psi} and taking the expectation on both sides of \eqref{eq29} yields
\begin{equation}\label{eq30}
\begin{aligned}
\mathbb{E}\Psi_{t+1} &\leq \mathbb{E} \Psi_t + \big(  \frac{\gamma (1+ l + 2\beta) -1 + 2\gamma C_1}{2\gamma} + \frac{5V_{1}\rho + 5 V_{\Upsilon}}{\rho}  \big)  \mathbb{E} \|y_{t+1} - y_t \|^2 + \mathbb{E} \|z_{t} - y_t \|^2\\
&\quad +  \frac{3\gamma + 2}{2\gamma}  \mathbb{E} \| y_{t+1} - z_t \|^2 - C_1 \mathbb{E} \| y_t - y_{t-1}\|^2.
\end{aligned}
\end{equation}
According to Lemma \ref{est-yz}, inequality \eqref{eq30} can be rewritten as 
\begin{equation*}
\begin{aligned}
\mathbb{E}\Psi_{t+1} &\leq \mathbb{E}\Psi_t + \Big(\frac{\gamma (1+ l + 2\beta) -1 + 2\gamma C_1}{2\gamma} + \frac{5V_{1}\rho + 5 V_{\Upsilon}}{\rho} \\
&\quad + \frac{3\gamma + 2}{2\gamma}((-1 + 2\gamma l) + (1+ \gamma L)^2) + (1+\gamma L)^2\Big)\mathbb{E}\| y_{t+1} - y_t \|^2   - C_1 \mathbb{E} \| y_t - y_{t-1}\|^2.
\end{aligned}
\end{equation*}
Denote
\begin{equation*}
\Lambda_1(\gamma) :=\frac{1- \gamma (1+ l + 2\beta) - 2\gamma C_1}{2\gamma} - \frac{5V_{1}\rho + 5 V_{\Upsilon}}{\rho} -  \frac{3\gamma + 2}{2\gamma} ((-1 + 2\gamma l) + (1+ \gamma L)^2) - (1+\gamma L)^2.
\end{equation*}
Then we have
\begin{equation}\label{eq33}
\mathbb{E}\Psi_{t+1} \leq \mathbb{E}\Psi_t - \Lambda_1(\gamma) \mathbb{E} \| y_{t+1} - y_t \|^2 - C_1 \mathbb{E} \| y_t - y_{t-1}\|^2.
\end{equation}
When $\Lambda_1(\gamma) > 0$, \eqref{eq33} shows that $\Psi_t$ is a decreasing function. Further, according to Lemma \ref{super}, we know
\begin{equation*}
\sum_{t=0}^{\infty} \| y_{t+1} - y_t \|^2 < \infty  \quad a.s.
\end{equation*}
Again, according to Lemma \ref{est-yz} and \eqref{algx} we have 
\begin{equation*}
\sum_{t=0}^{\infty} \| x_{t+1} - x_t \|^2 < \infty  \quad a.s. \quad \textmd{and} \quad \sum_{t=0}^{\infty} \| z_{t+1} - y_{t+1} \|^2 < \infty  \quad a.s..
\end{equation*}
\end{proof}

In the above, we have shown that the sum of squares of the sequence norm has a finite length almost surely. However, we need to further demonstrate that the sequence norm is summable. To do so, we first provide a bound on the norm of subgradient $\partial \Phi(X_t)$.
%%%%%%%%%%%%%%%%%%%%%%%%%%%%%%%%%%%%%%%%%%%%%%%%%%%%%%%%%%%%
\begin{lemma}\label{Thm2}
Suppose that Assumption \ref{ass-fun} holds and $H$ is twice continuously differentiable with a bounded Hessian, i.e., there exists $M > 0$ such that $\| \nabla^2 H(y) \| < M$ for any $y$. Let  $\{(x_t,  y_t, z_t) \}_{t \geq 0}$ be a sequence generated by Algorithm \ref{alg:STOS} using variance-reduced gradient estimators.  Denote $X_t : = (x_t, y_t, z_t, y_{t-1})$. Then, for any $t \geq 1$, we have 
\begin{equation*}
\mathbb{E} dist(0, \partial \Phi(X_t)) \leq C \big(\mathbb{E} \| y_{t+1} - y_t \| + \mathbb{E} \| y_t - y_{t-1} \|\big)  + \mathbb{E} \Gamma_t,
\end{equation*}
where $C = \max\{(M+1 + \frac{\beta}{\gamma})(1+ \gamma L) + V_2, 2C_1 + V_2 \}$. Moreover, for any critical point $X_*= (x_*, y_*, z_*, y_*)$, we have $\mathbb{E} dist(0, \partial \Phi(X_*)) = 0$.
\end{lemma}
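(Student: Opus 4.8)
The plan is to exhibit one explicit element of the limiting subdifferential $\partial\Phi(X_t)$ and bound its norm, since $\mathrm{dist}(0,\partial\Phi(X_t))$ is at most the norm of any single subgradient. In \eqref{Phi} the functions $F$, $H$ and all quadratic/cross terms are differentiable (and $H$ is assumed $C^2$), so the only set-valued component comes from $G$. I would therefore first differentiate $\Phi$ separately in each of its four arguments $(x,y,z,y')$. A direct computation gives
\begin{align*}
\nabla_x \Phi(X_t) &= \tfrac{1}{\gamma}(z_t - y_t),\\
\partial_z \Phi(X_t) &= \partial G(z_t) - \tfrac{1}{\gamma}(z_t - x_t) + \nabla H(y_t) - (z_t - y_t),\\
\nabla_y \Phi(X_t) &= \nabla F(y_t) + \tfrac{1}{\gamma}(y_t - x_t) + \nabla^2 H(y_t)(z_t - y_t) + (z_t - y_t) + 2C_1(y_t - y_{t-1}),\\
\nabla_{y'}\Phi(X_t) &= 2C_1(y_{t-1} - y_t).
\end{align*}
The Hessian factor $\nabla^2 H(y_t)(z_t-y_t)$ in $\nabla_y\Phi$ is produced precisely by differentiating the cross term $\langle\nabla H(y),z-y\rangle$ in $y$, which is exactly why the $C^2$ and bounded-Hessian hypothesis $\|\nabla^2 H\|<M$ is imposed here.

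The next step is to cancel the large terms using the optimality conditions of the \emph{previous} iterate. Writing \eqref{opti-1}--\eqref{opti-2} with $t$ in place of $t+1$ yields $\nabla F(y_t)=\tfrac{1}{\gamma}(x_{t-1}-y_t)$ and an explicit element $g_t:=-\tfrac{1}{\gamma}\big(z_t+\gamma\wtilde H(y_t)-2y_t+x_{t-1}\big)\in\partial G(z_t)$, which I select inside $\partial_z\Phi(X_t)$. Substituting these together with the identity $x_t-x_{t-1}=z_t-y_t$ coming from \eqref{algx} makes the $x$-terms and the $\nabla F$/$x_{t-1}$ contributions telescope, leaving
\begin{align*}
d_z &= -\big(\tfrac{1}{\gamma}+1\big)(z_t - y_t) + \big(\nabla H(y_t) - \wtilde H(y_t)\big),\\
d_y &= -\tfrac{1}{\gamma}(z_t - y_t) + \nabla^2 H(y_t)(z_t - y_t) + (z_t - y_t) + 2C_1(y_t - y_{t-1}),
\end{align*}
while $d_x=\tfrac{1}{\gamma}(z_t-y_t)$ and $d_{y'}=2C_1(y_{t-1}-y_t)$ are already of the desired form. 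Thus the constructed subgradient is expressed through only three quantities: $z_t-y_t$, $y_t-y_{t-1}$, and the gradient-estimation error $\wtilde H(y_t)-\nabla H(y_t)$.

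To finish, I would bound $\mathrm{dist}(0,\partial\Phi(X_t))$ by the sum of the component norms $\|d_x\|+\|d_y\|+\|d_z\|+\|d_{y'}\|$, control $\|z_t-y_t\|=\|x_t-x_{t-1}\|\le(1+\gamma L)\|y_{t+1}-y_t\|$ via Lemma \ref{est-yz}, use $\|\nabla^2 H(y_t)\|\le M$ on the Hessian term, take expectations, and apply the first-moment variance-reduced bound \eqref{var-mse1} (with $y_t$ as the evaluation point) to get $\mathbb{E}_t\|\wtilde H(y_t)-\nabla H(y_t)\|\le\Gamma_t+V_2(\mathbb{E}_t\|y_{t+1}-y_t\|+\|y_t-y_{t-1}\|)$. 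Collecting the coefficients of $\|y_{t+1}-y_t\|$ and of $\|y_t-y_{t-1}\|$ then gives the asserted inequality with $C$ of the stated $\max$-form and the additive $\mathbb{E}\Gamma_t$. The main obstacle is the honest bookkeeping of this cancellation combined with the fact that $\Phi$ uses the \emph{true} gradient $\nabla H$ whereas the algorithm's $z$-update uses the \emph{stochastic} estimator $\wtilde H$; the residual $\wtilde H(y_t)-\nabla H(y_t)$ does not vanish pathwise and must be absorbed by the variance-reduced first-moment property, which is exactly where $\Gamma_t$ and $V_2$ enter.

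For the critical-point claim, a critical point $X_*=(x_*,y_*,z_*,y_*)$ is a fixed point of \eqref{algx}, forcing $z_*=y_*$, and satisfies the limiting optimality conditions with the estimator error annihilated, i.e. $\wtilde H(y_*)=\nabla H(y_*)$. Substituting $z_*-y_*=0$ and $y_*-y_*=0$ into $d_x,d_y,d_z,d_{y'}$ makes every component vanish, so $0\in\partial\Phi(X_*)$ and hence $\mathbb{E}\,\mathrm{dist}(0,\partial\Phi(X_*))=0$.
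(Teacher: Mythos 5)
Your treatment of the main inequality is essentially the paper's own proof: the paper likewise exhibits an explicit subgradient $(\xi_x^t,\xi_y^t,\xi_z^t,\xi_{y'}^t)\in\partial\Phi(X_t)$ by selecting $\frac{1}{\gamma}\big(2y_t-z_t-\gamma\wtilde H(y_t)-x_{t-1}\big)\in\partial G(z_t)$ from the previous iterate's optimality condition \eqref{opti-2}, cancelling via $\nabla F(y_t)=\frac{1}{\gamma}(x_{t-1}-y_t)$ and the update identity $x_t-x_{t-1}=z_t-y_t$ from \eqref{algx}, and then bounding the component norms with Lemma \ref{est-yz}, the Hessian bound $\|\nabla^2 H(y_t)\|\leq M$, and the first-moment bound \eqref{var-mse1} applied along the $y$-sequence to produce $\Gamma_t$ and $V_2$. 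One small point in your favor: your $\nabla_y\Phi$ correctly carries the term $2C_1(y_t-y_{t-1})$ arising from $C_1\|y-y'\|^2$ in \eqref{Phi}, which the paper's displayed computation of $\partial_y\Phi(X_t)$ drops; in your accounting the coefficient of $\|y_t-y_{t-1}\|$ becomes $4C_1+V_2$ rather than $2C_1+V_2$, which changes the value but not the form of $C$ and is harmless since the constant is only used through its finiteness.

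The genuine gap is in the final claim, $\mathbb{E}\,\mathrm{dist}(0,\partial\Phi(X_*))=0$. Here $X_*$ is a cluster point of the random sequence, and both of your assertions about it are unjustified as stated: a cluster point is not pathwise a fixed point of \eqref{algx} (one obtains $z_*=y_*$ only via $\sum_t\|z_{t+1}-y_{t+1}\|^2<\infty$ a.s.\ from Theorem \ref{decrease}), and the identity ``$\wtilde H(y_*)=\nabla H(y_*)$'' is not meaningful at a limit point --- the estimator is defined along the algorithm's trajectory, and the variance-reduction framework only yields $\mathbb{E}\Gamma_t\to 0$ and $\mathbb{E}\Upsilon_t\to 0$ (Definition \ref{var-redu}, item 3), i.e.\ asymptotic vanishing of the error in expectation, not pointwise annihilation at $X_*$. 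More importantly, even after one establishes $\mathbb{E}\,\mathrm{dist}(0,\partial\Phi(X_t))\to 0$ (which does follow from your bound combined with Theorem \ref{decrease} and the Convergence-of-Estimator property), transferring this to the cluster point requires closedness of the graph of the limiting subdifferential of $\Phi$, and for the nonsmooth component $G$ this demands function-value convergence $G(z_{t_k})\to G(z_*)$ along the subsequence; this does not follow from $z_{t_k}\to z_*$ and lower semicontinuity alone (consider $G=\mathcal{I}_C$). The paper supplies precisely this missing step: comparing the minimality of $z_{t_k}$ in the subproblem \eqref{algz} against the candidate $z_*$ gives $\limsup_{k\to\infty}G(z_{t_k})\leq G(z_*)$, hence $G(z_{t_k})\to G(z_*)$ and $\Phi(X_{t_k})\to\Phi(X_*)$, and only then does the conclusion $\mathbb{E}\,\mathrm{dist}(0,\partial\Phi(X_*))=0$ follow. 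Your proposal needs this argument (or an equivalent substitute) to close the second claim.
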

\begin{proof}
 First, we denote
\begin{equation*}
\begin{aligned}
\xi_x^t &= \frac{1}{\gamma}(z_t - y_t),\\
\xi_y^t &= \frac{1}{\gamma}(x_{t-1} - x_t) + \frac{1}{\gamma}(z_t - y_t) + \langle \nabla^2 H(y_t), z_t -  y_t \rangle,\\
\xi_z^t &= (1 + \frac{2}{\gamma}) (y_t - z_t) + \frac{1}{\gamma}(x_t - x_{t-1}) + \nabla H(y_t) - \wtilde H(y_t),\\
\xi_{y^{\prime}}^t &= - 2C_1 \| y_t - y_{t-1} \|.
\end{aligned}
\end{equation*}
According to the definition of $\Phi$ in \eqref{Phi},  we can compute 
\begin{subequations}\nonumber
\begin{align}
&\partial_x \Phi (X_t) =  -\frac{1}{\gamma}(y_t - x_t) + \frac{1}{\gamma}(z_t -  x_t) = \frac{1}{\gamma}(z_t - y_t),\\
&\partial_z \Phi (X_t)= \partial G(z_t) - \frac{1}{\gamma}(z_t -  x_t) + \wtilde H(y_t) - (z_t - y_t),\\
&\partial_{y^{\prime}} \Phi (X_t) = -2 C_1 (y_t - y_{t-1}),
\end{align}
\end{subequations}
and 
\begin{equation*}
\begin{aligned}
\partial_y \Phi(X_t) &= \nabla F(y_t) + \nabla H(y_t) + \frac{1}{\gamma}(y_t - x_t) + (z_t - y_t) + \langle \nabla^2 H(y_t), z_t - y_t \rangle - \nabla H(y_t)\\
&= \nabla F(y_t) + \frac{1}{\gamma}(z_t - x_t) + \langle \nabla^2 H(y_t),  z_t - y_t \rangle. 
\end{aligned}
\end{equation*}
Since $\frac{1}{\gamma}(2y_t - z_t - \gamma \wtilde H(y_t) -  x_{t-1}) \in \partial G(z_t)$, we have
\begin{equation*}
(1 + \frac{2}{\gamma}) (y_t - z_t) + \frac{1}{\gamma}(x_t - x_{t-1}) + \nabla H(y_t) - \wtilde H(y_t) \in \partial_z \Phi (X_t).
\end{equation*}
By the optimality condition $0 = \nabla F(y_t) + \frac{1}{\gamma}(y_t - x_{t-1})$, we know
\begin{equation*}
\begin{aligned}
\partial_y \Phi(X_t) &= \frac{1}{\gamma}(x_{t-1} - x_t) + \frac{1}{\gamma}(z_t - y_t) + \langle \nabla^2 H(y_t), z_t -  y_t \rangle.
\end{aligned}
\end{equation*}
Then, we have $(\xi_x^t, \xi_y^t, \xi_z^t, \xi_{y^{\prime}}^t) \in \partial \Phi (X_t)$. By further estimates, we get
\begin{subequations}\nonumber
\begin{align}
\| \xi_x^t \| &= \frac{1}{\gamma}\|z_t - y_t \| = \frac{1}{\gamma} \| x_t - x_{t-1} \| \leq \frac{1+\gamma L}{\gamma} \| y_{t+1} - y_t \|, \\
\| \xi_y^t \| &\leq \frac{1}{\gamma}\|x_t - x_{t-1} \| + \frac{1}{\gamma} \| z_t - y_t \| + \| \nabla^2 H(y_t) \| \| z_t - y_t\|, \\
&\leq (\frac{2}{\gamma} + M )\|x_t - x_{t-1}\|  \leq (\frac{2}{\gamma} + M ) (1+\gamma L) \|y_{t+1} - y_{t}\|,\\
\| \xi_z^t\| &\leq (1+ \frac{2}{\gamma}) \| z_t - y_t \| + \frac{1}{\gamma} \| x_t - x_{t-1} \| + \| \nabla H(y_t) - \wtilde H(y_t) \|, \\
&\leq (1+ \frac{3}{\gamma}) \| x_t -  x_{t-1} \| + \| \nabla H(y_t) - \wtilde H(y_t) \|, \\
&\leq (1+ \frac{3}{\gamma})(1+ \gamma L) \| y_{t+1} - y_t \| + \| \nabla H(y_t) - \wtilde H(y_t) \|,\\
\| \xi_{y^{\prime}}^t \| &= 2C_1 \| y_t - y_{t-1} \|.
\end{align}
\end{subequations}
Then we have the following estimate
\begin{equation*}
\begin{aligned}
&\mathbb{E}_t \| (\xi_x^t, \xi_y^t, \xi_z^t, \xi_{y^{\prime}}^t) \|\\
&\leq (M + 1 + \frac{\beta}{\gamma})(1+ \gamma L) \mathbb{E}_t \| y_{t+1} - y_t \| + 2C_1 \| y_t - y_{t-1} \| + \mathbb{E}_t \| \nabla H(y_t) - \wtilde H(y_t) \|\\
&\leq \Big( (M + 1 + \frac{\beta}{\gamma})(1+ \gamma L) + V_2 \Big) \mathbb{E}_t \| y_{t+1} - y_t \| + (2C_1 + V_2) \| y_t - y_{t-1} \| + \Gamma_t\\
&\leq C \big[ \mathbb{E}_t \| y_{t+1} - y_t \| + \| y_t - y_{t-1} \| \big] + \Gamma_t,
\end{aligned}
\end{equation*}
where $C = \max\{(M+1 + \frac{\beta}{\gamma})(1+ \gamma L) + V_2, 2C_1 + V_2 \}.$  Because $\dist(0, \partial \Phi(X_t)) \leq \| ( \xi_x^t, \xi_y^t, \xi_z^t, \xi_{y^{\prime}}^t) \|$, we get
\begin{equation*}
\mathbb{E} \dist(0, \partial \Phi(X_t)) \leq C \big( \mathbb{E} \| y_{t+1} - y_t \| + \mathbb{E} \| y_t - y_{t-1} \| \big) + \mathbb{E} \Gamma_t.
\end{equation*}
Furthermore, by the Convergence of Estimator, we have $\lim_{t \to \infty} \mathbb{E} \| (\xi_x^t, \xi_y^t, \xi_z^t, \xi_{y^{\prime}}^t) \| = 0$. Then
\begin{equation}\label{eq38}
\lim_{t \to \infty} \mathbb{E} \dist(0, \partial \Phi(X_t)) = 0.
\end{equation}
Suppose that $X_* = (x_*, y_*, z_*, y_*)$ is any cluster point of $X_t := \{x_t, y_t, z_t, y_{t-1} \}_{t\geq 1}$. Then there exists a subsequence $\{ X_{t_k} \}$ satisfying $\lim_{k \to \infty} X_{t_k} = X_*$. By the subproblem \eqref{algz} in Algorithm \ref{alg:STOS}, for any $k \geq 1$, we have
\begin{equation*}
\begin{aligned}
&G(z_{t_k}) + \frac{1}{2\gamma} \| z_{t_k} - (2 y_{t_k} - \gamma \wtilde H(y_{t_k}) - x_{t_k - 1})\|^2\\
&\leq G(z_*) + \frac{1}{2\gamma} \| z_* - (2 y_{t_k} - \gamma \wtilde H(y_{t_k}) - x_{t_k -1}) \|^2.
\end{aligned}
\end{equation*}
This is equivalent to
\begin{equation*}
G(z_{t_k}) + \frac{1}{2\gamma} \Big( \| z_{t_k} \|^2 - \| z_* \|^2 - \langle z_{t_k} - z_*,  2 y_{t_k} - x_{t_k -1} - \gamma \wtilde H(y_{t_k})\rangle \Big) \leq G(z_*),
\end{equation*}
which means $\lim \sup_{k \to \infty} G(z_{t_k}) = G(z_*)$. Because $G$ is a lower semi-continuous function, we know $\lim\inf_{k \to \infty}G(z_{t_k}) \geq G(z_*)$, thus $\lim_{k \to \infty} G(z_{t_k}) = G(z_*)$. Further, by the continuity of $F$ and $H$, we get
\begin{equation*}
\lim_{k \to \infty} \Phi(x_{t_k}, y_{t_k}, z_{t_k}, y_{t_k -1}) = \Phi(x_*, y_*, z_*, y_*) = \Phi (X_*).
\end{equation*}
Combined with \eqref{eq38}, we have
\begin{equation*}
\mathbb{E} \dist(0, \partial \Phi(X_*)) = 0.
\end{equation*} 
\end{proof}

Next, we present the random version of the KL property, which is taken from Lemma 4.5 in \cite{DTLDS}.

\begin{theorem}\label{KL}
Let $\{ X_t \}_{t=0}^{\infty}$ be a bounded sequence of iterates of Algorithm \ref{alg:STOS} using a variance-reduced gradient estimator, and suppose that $X_t$ is not a critical point after a finite number of iterations. Let $\Phi$ be a semialgebraic function satisfying the Kurdyka-$\L$ojasiewicz property (see Definition \ref{KL1}) with exponent $\theta$. Then there exists an index $m$ and desingularizing function $\varphi = a r^{1-\theta}$ so that the following holds almost surely:
\begin{equation}\nonumber
\varphi^{\prime} (\mathbb{E}[\Phi(X_t) - \Phi_{t}^{*}]) \mathbb{E}\dist(0, \partial \Phi(X_t)) \geq 1,~~~~~\forall t > m,
\end{equation}
where $\Phi_{t}^{*}$ is an non-decreasing sequence converging to $\mathbb{E}\Phi(X_*)$ for some $X_* \in \Omega$, where $\Omega$ is the set of cluster points of $\{ X_t \}_{t \geq 0}$.
\end{theorem}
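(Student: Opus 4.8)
The plan is to adapt the argument of \cite[Lemma 4.5]{DTLDS} to the STOS setting, feeding in the two ingredients we have already established: the energy decrease of Theorem \ref{decrease} and the subgradient bound of Lemma \ref{Thm2}. First I would dispose of the degenerate exponent $\theta = 0$: there $\varphi'(r) \equiv a$ is constant, the asserted inequality reads $a\,\mathbb{E}\dist(0,\partial\Phi(X_t)) \geq 1$, and since $\mathbb{E}\dist(0,\partial\Phi(X_t)) \to 0$ by Lemma \ref{Thm2} this can only persist if $X_t$ is critical after finitely many steps, which is precisely the excluded alternative. Hence I may assume $\theta \in (0,1)$, where $\varphi(r) = a r^{1-\theta}$ and $\varphi'(r) = a(1-\theta) r^{-\theta}$.

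Next I would set up the limiting objects. Because $\sum_t\|y_{t+1}-y_t\|^2<\infty$ and the analogous sums for $x,z$ are finite almost surely (Theorem \ref{decrease}), the increments $\|X_{t+1}-X_t\|\to 0$, so the cluster set $\Omega$ of $\{X_t\}$ is nonempty, compact and connected. With $\Psi_t$ the energy in \eqref{Psi}, $\mathbb{E}\Psi_t$ is non-increasing and bounded below, hence convergent; since $\Psi_t$ and $\Phi_t$ differ only by $\mathbb{E}_t\|\wtilde H(y_t)-\nabla H(y_t)\|^2$, $\Upsilon_t$ and $\|y_t-y_{t-1}\|^2$, all of which tend to $0$ in expectation (Convergence of Estimator in Definition \ref{var-redu} together with the finite-length sums), $\mathbb{E}\Phi(X_t)$ converges to the same limit, call it $\Phi^* := \mathbb{E}\Phi(X_*)$, independent of the chosen cluster point. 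Moreover Lemma \ref{Thm2} gives $\mathbb{E}\dist(0,\partial\Phi(X_t)) \to 0$.

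The analytic core is to upgrade the deterministic KL inequality to its expectation form. Applying the uniformization of the KL property (Bolte--Sabach--Teboulle) to the compact set $\Omega$ on which $\Phi$ is constant, there are $\varepsilon,\eta>0$ and a single $\varphi(r)=ar^{1-\theta}$ such that the inequality of Definition \ref{KL1} holds for every point of the tube $\{\,\dist(\cdot,\Omega)<\varepsilon,\ \Phi^*<\Phi<\Phi^*+\eta\,\}$. For $t$ past some almost surely finite index $m$ the iterate $X_t$ lies in this tube, so realization-wise $\dist(0,\partial\Phi(X_t)) \geq (\Phi(X_t)-\Phi^*)_+^{\theta}/(a(1-\theta))$, an inequality valid everywhere once the positive part is inserted; taking expectations yields
$$\mathbb{E}\dist(0,\partial\Phi(X_t)) \geq \frac{1}{a(1-\theta)}\,\mathbb{E}\big[(\Phi(X_t)-\Phi^*)_+^{\theta}\big].$$
Since $r\mapsto r^{\theta}$ is concave, Jensen's inequality runs the \emph{wrong} way, so one cannot simply use a constant threshold $\Phi^*$. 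Instead I would \emph{define} the surrogate
$$\Phi_t^{*,\mathrm{raw}} := \mathbb{E}\Phi(X_t) - \Big(\mathbb{E}\big[(\Phi(X_t)-\Phi^*)_+^{\theta}\big]\Big)^{1/\theta},$$
so that by construction $\big(\mathbb{E}\Phi(X_t) - \Phi_t^{*,\mathrm{raw}}\big)^{\theta} = \mathbb{E}[(\Phi(X_t)-\Phi^*)_+^{\theta}]$ and the displayed bound becomes exactly $\varphi'(\mathbb{E}[\Phi(X_t)-\Phi_t^{*,\mathrm{raw}}])\,\mathbb{E}\dist(0,\partial\Phi(X_t)) \geq 1$; the positive part absorbs realizations with $\Phi(X_t)\le\Phi^*$.

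It remains to produce the advertised $\Phi_t^*$. Convergence is straightforward: $\Phi(X_t)-\Phi^*\to 0$ almost surely and the family is bounded (boundedness of $\{X_t\}$ with continuity of $F,H$ and the lower-semicontinuity control of $G$ used in Lemma \ref{Thm2}), so dominated convergence gives $\mathbb{E}[(\Phi(X_t)-\Phi^*)_+^{\theta}]\to 0$ and hence $\Phi_t^{*,\mathrm{raw}}\to\Phi^*=\mathbb{E}\Phi(X_*)$. \textbf{Monotonicity is the main obstacle:} the raw threshold need not be non-decreasing, yet the downstream telescoping in Theorem \ref{thmKLconver} uses concavity of $\varphi$ on $\mathbb{E}[\Phi_t-\Phi_t^*]$ and requires $\Phi_{t+1}^*\ge\Phi_t^*$ for the correct sign. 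I would therefore replace $\Phi_t^{*,\mathrm{raw}}$ by a non-decreasing sequence $\Phi_t^*$ lying between it and $\mathbb{E}\Phi(X_t)$ and squeezed to $\Phi^*$ (possible because both $\Phi_t^{*,\mathrm{raw}}$ and $\mathbb{E}\Phi(X_t)$ converge to $\Phi^*$); this only raises $\Phi_t^*$, which keeps $\varphi'$ larger (as $\varphi'$ is non-increasing) and keeps the argument in $(0,\eta)$, so the expected KL inequality is preserved for all $t>m$. This monotonization and domain bookkeeping is exactly the technical step of \cite[Lemma 4.5]{DTLDS}, and the remainder of the argument transfers once the limiting objects above are in place.
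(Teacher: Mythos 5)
Two things to compare against: the paper itself does \emph{not} prove Theorem \ref{KL} --- it is imported verbatim from \cite[Lemma 4.5]{DTLDS} (``which is taken from Lemma 4.5 in [DTLDS]'') --- so the only benchmark is the cited proof. Your skeleton matches that source's strategy: uniformized KL on the cluster set, taking expectations of the pointwise inequality, and defining the shifted level $\Phi_t^{*,\mathrm{raw}}$ by $\bigl(\mathbb{E}\Phi(X_t)-\Phi_t^{*,\mathrm{raw}}\bigr)^{\theta}=\mathbb{E}\bigl[(\Phi(X_t)-\Phi^*)_+^{\theta}\bigr]$. Your diagnosis that Jensen runs the wrong way, and the equality-by-construction device to sidestep it, are exactly the right moves.

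However, the monotonization step --- the part you yourself flag as the main obstacle --- fails as written, so there is a genuine gap. To preserve the inequality you must take $\Phi_t^*\geq\Phi_t^{*,\mathrm{raw}}$ (correct, since $\varphi'$ is non-increasing). But a non-decreasing sequence converging to $\Phi^*=\mathbb{E}\Phi(X_*)$ converges to its supremum, so necessarily $\Phi_t^*\leq\Phi^*$ for all $t$. Meanwhile Jensen's inequality for the concave map $r\mapsto r^{\theta}$ gives $\bigl(\mathbb{E}[(\Phi(X_t)-\Phi^*)_+^{\theta}]\bigr)^{1/\theta}\leq\mathbb{E}[(\Phi(X_t)-\Phi^*)_+]$, so whenever $\Phi(X_t)\geq\Phi^*$ a.s.\ (the generic situation for a descent scheme approaching its limiting value from above) one gets $\Phi_t^{*,\mathrm{raw}}\geq\mathbb{E}\Phi(X_t)-\mathbb{E}[\Phi(X_t)-\Phi^*]=\Phi^*$, with strict inequality unless $\Phi(X_t)-\Phi^*$ is a.s.\ degenerate. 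Hence the interval $[\Phi_t^{*,\mathrm{raw}},\,\mathbb{E}\Phi(X_t)]$ in which you propose to choose $\Phi_t^*$ sits (generically strictly) above $\Phi^*$, and the requirements ``non-decreasing, dominates $\Phi_t^{*,\mathrm{raw}}$, converges to $\Phi^*$'' force $\Phi_t^*=\Phi_t^{*,\mathrm{raw}}=\Phi^*$ identically --- your squeeze exists only in the degenerate case. So the single delicate step is not resolved but deferred back to \cite{DTLDS}. Two smaller glosses of the same kind: your index $m$ is only almost surely finite and realization-dependent, whereas the asserted inequality compares unconditional expectations at each fixed $t$, so $m$ must be made deterministic (via the convergence of $\mathbb{E}\,\mathrm{dist}(0,\partial\Phi(X_t))$, $\mathbb{E}\Upsilon_t$, etc., not via pathwise tube membership); and you treat the limiting value $\Phi^*$ as one deterministic number, while in the stochastic setting the limit of $\Phi$ along a realization is a random variable --- which is precisely why the theorem is phrased with a sequence $\Phi_t^*$ converging to $\mathbb{E}\Phi(X_*)$ rather than with the fixed level you plug into the pointwise KL inequality.
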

In the following, we provide the convergence for STOS (Algorithm \ref{alg:STOS}) under the assumption that the objective functions $F, G$, and $H$ are semi-algebraic.

\begin{proof}[Proof of  Theorem \ref{thmKLconver}]
If $\theta \in (0, \frac{1}{2})$, then $\Phi$ also satisfies the KL property with exponent $\frac{1}{2}$, so we only consider the case $\theta \in [\frac{1}{2}, 1).$ By Theorem \ref{KL} there exists a function $\varphi_0(r) = ar^{1-\theta}$ such that, almost surely,
\begin{equation*}
\varphi_0^{\prime} (\mathbb{E}[\Phi(X_t) - \Phi_t^*]) \mathbb{E}\dist(0, \partial\Phi(X_t)) \geq 1, ~~\forall t >m.
\end{equation*}
Using the bound on $\mathbb{E}\dist(0, \partial\Phi(X_t))$ in Theorem  \ref{Thm2} and Jensen$^{\prime}$s inequality, we have
\begin{equation}\label{eq47}
\begin{aligned}
\mathbb{E} \dist(0, \partial \Phi(x_t)) &\leq  C \left( \mathbb{E} \| y_{t+1} - y_t\|  + \mathbb{E} \| y_t - y_{t-1} \| \right)+ \mathbb{E} \Gamma_t\\
&\leq C \left( \sqrt{\mathbb{E} \| y_{t+1} - y_t \|^2 } + \sqrt{\mathbb{E} \| y_t - y_{t-1} \|^2} \right) + \sqrt{s\mathbb{E} \Upsilon_t}.
\end{aligned}
\end{equation}
Because $\Gamma_t = \sum_{i=1}^{s} \| v_t^i \|$ for some vectors $v_t^i$, we have $\mathbb{E} \Gamma_t = \mathbb{E} \sum_{i=1}^s \| v_t^i \| \leq \mathbb{E} \sqrt{s\sum_{i=1}^s \| v_t^i \|^2} \leq \sqrt{s\mathbb{E}\Upsilon_t}$. By the geometric decay in Definition \ref{var-redu} we can bound the term $\sqrt{\mathbb{E}\Upsilon_t}$:
\begin{equation*}
\begin{aligned}
\sqrt{\mathbb{E}\Upsilon_{t+1}} &\leq \sqrt{(1 - \rho) \mathbb{E}\Upsilon_{t} + V_{\Upsilon} \mathbb{E} \| y_{t+1} - y_{t} \|^2 + V_{\Upsilon} \mathbb{E} \| y_{t} - y_{t-1} \|^2}\\
&\leq \sqrt{1-\rho} \sqrt{\mathbb{E}\Upsilon_{t}} + \sqrt{V_{\Upsilon}} \sqrt{\mathbb{E} \| y_{t+1} -y_{t} \|^2} + \sqrt{V_{\Upsilon}} \sqrt{\mathbb{E} \| y_{t} -y_{t-1} \|^2}\\
&\leq (1 - \frac{\rho}{2}) \sqrt{\mathbb{E}\Upsilon_{t}} + \sqrt{V_{\Upsilon}} \sqrt{\mathbb{E} \| y_{t+1} -y_{t} \|^2} + \sqrt{V_{\Upsilon}} \sqrt{\mathbb{E} \| y_{t} -y_{t-1} \|^2}.
\end{aligned}
\end{equation*}
Rearranging  and multiplying by $\sqrt{s}$ yields
\begin{equation}\label{eq50}
\begin{aligned}
\sqrt{s\mathbb{E}\Upsilon_{t}} &\leq \frac{2\sqrt{s}}{\rho} (\sqrt{\mathbb{E}\Upsilon_{t}} - \sqrt{\mathbb{E}\Upsilon_{t+1}}) + \frac{2\sqrt{s}\sqrt{V_{\Upsilon}}}{\rho} \sqrt{\mathbb{E}\| y_{t+1} - y_{t} \|^2} +  \frac{2\sqrt{s}\sqrt{V_{\Upsilon}}}{\rho} \sqrt{\mathbb{E}\| y_{t} - y_{t-1} \|^2}.
\end{aligned}
\end{equation}
Substituting \eqref{eq50} into \eqref{eq47}, we get 
\begin{equation}\label{eq51}
\begin{aligned}
&\mathbb{E} \dist(0, \partial \Phi(X_t))\\
&\leq C\sqrt{\mathbb{E}\| y_{t+1} - y_t \|^2} + C \sqrt{\mathbb{E} \| y_t - y_{t-1} \|^2} + \frac{2\sqrt{s}}{\rho} (\sqrt{\mathbb{E}\Upsilon_t} - \sqrt{\mathbb{E} \Upsilon_{t+1}}) + \frac{2\sqrt{s V_{\Upsilon}}}{\rho} \sqrt{\mathbb{E}\| y_{t+1}- y_t \|^2} \\
&\quad + \frac{2\sqrt{s V_{\Upsilon}}}{\rho} \sqrt{\mathbb{E}\| y_{t}- y_{t-1} \|^2}\\
&\leq (C + \frac{2\sqrt{s V_{\Upsilon}}}{\rho})  \sqrt{\mathbb{E}\| y_{t+1}- y_t \|^2} + (C + \frac{2\sqrt{s V_{\Upsilon}}}{\rho})  \sqrt{\mathbb{E}\| y_{t}- y_{t-1} \|^2} + \frac{2\sqrt{s}}{\rho} (\sqrt{\mathbb{E}\Upsilon_t} - \sqrt{\mathbb{E} \Upsilon_{t+1}})\\
&\leq (C + \frac{2\sqrt{s V_{\Upsilon}}}{\rho}) ( \sqrt{\mathbb{E}\| y_{t+1}- y_t \|^2} + \sqrt{\mathbb{E}\| y_{t}- y_{t-1} \|^2} ) + \frac{2\sqrt{s}}{\rho}(\sqrt{\mathbb{E}\Upsilon_t} - \sqrt{\mathbb{E} \Upsilon_{t+1}}).
\end{aligned}
\end{equation}
Define $C_t = (C + \frac{2\sqrt{s V_{\Upsilon}}}{\rho}) ( \sqrt{\mathbb{E}\| y_{t+1}- y_t \|^2} + \sqrt{\mathbb{E}\| y_{t}- y_{t-1} \|^2} ) + \frac{2\sqrt{s}}{\rho}(\sqrt{\mathbb{E}\Upsilon_t} - \sqrt{\mathbb{E} \Upsilon_{t+1}})$. Then we have $\mathbb{E}\dist(0, \partial \Phi(x_t)) \leq C_t$. Therefore, $\varphi_0^{\prime} (\mathbb{E}[\Phi(x_t) -  \Phi(x_t^*)]) C_t \geq 1$ for $t > m.$ Using the definition of $\varphi_0$, we can rewrite this inequality as
\begin{equation*}
\frac{a(1- \theta){\bf C_t}}{(\mathbb{E}[\Phi(X_t) - \Phi_t^*])^{\theta}} \geq 1,~~~~~\forall t >m.
\end{equation*}
Next we prove that the above inequality holds for $\Psi_t$, for which an additional term $\mathcal{O} \{  (\mathbb{E} [ \| \wtilde  H(y_{t})$ $- \nabla H(y_{t})\|^2 + \Upsilon_t + \| y_t - y_{t-1} \|^2 ])^\theta \}$ needs to be added to the denominator. By ${\bf C_t} \geq \mathcal{O} (\sqrt{\mathbb{E}\| y_{t+1} - y_{t} \|^2} $ $+ \sqrt{\mathbb{E}\| y_{t} - y_{t-1} \|^2} + \sqrt{\mathbb{E} \Upsilon_{t-1}})$, $\mathbb{E}\| \wtilde  H(y_{t}) - \nabla H(y_{t})\|^2 \to 0,~\mathbb{E}\Upsilon_t \to  0$,~$\mathbb{E}\| y_t - y_{t-1}\| \to 0$ and $\theta > \frac{1}{2}$, there exists an index $m$ and a constant $c> 0$ such that
\begin{equation*}
\begin{aligned}
&  \mathbb{E} \big[ - \| \wtilde  H(y_{t}) - \nabla H(y_{t})\|^2 +  \frac{5}{2\rho} \Upsilon_t + \frac{5V_{\Upsilon} + 5V_1 \rho}{2\rho}\| y_t - y_{t-1}\|^2 \big] \\
&\leq  \mathbb{E} \big[ \| \wtilde H(y_t) - \nabla H(y_t) \|^2 + \frac{5}{2\rho} \Upsilon_t + \frac{5V_{\Upsilon} + 5V_1 \rho}{2\rho} \| y_t - y_{t-1} \|^2 \big]\\
&\leq \mathbb{E} \big[ \Upsilon_t + V_1 \| y_{t+1} - y_t \|^2 + V_1 \| y_t - y_{t-1} \|^2 + \frac{5}{2\rho} \Upsilon_t + \frac{5V_{\Upsilon} + 5V_1 \rho}{2\rho} \| y_t - y_{t-1}\|^2 \big]\\
&\leq \mathbb{E} \big[ (1+ \frac{5}{2\rho}) \Upsilon_t + V_1 \| y_{t+1} - y_t \|^2 + \left( V_1 + \frac{5V_{\Upsilon} + 5V_1 \rho}{2\rho} \right) \| y_t - y_{t-1} \|^2  \big]\\
&\leq \mathcal{O}  \big( \mathbb{E} [\Upsilon_{t} + \| y_{t+1} - y_{t} \|^2 + \| y_{t} - y_{t-1} \|^2] \big)\\
&\leq c {\bf C_t}^{\frac{1}{\theta}},~~~\forall t > m.
\end{aligned}
\end{equation*}
Denote 
\begin{equation*}
\Lambda_t \overset{\mathrm{def}}{=} -\| \wtilde  H(y_{t}) - \nabla H(y_{t})\|^2 +  \frac{5}{2\rho} \Upsilon_t + \frac{5V_{\Upsilon} + 5V_1 \rho}{2\rho}\| y_t - y_{t-1}\|^2. 
\end{equation*}
Since the terms above are small compared to ${\bf C_t}$, we can find a constant $d \in (c, +\infty)$ such that
\begin{equation*}
\begin{aligned}
\frac{ad(1-\theta) {\bf C_t}}{(\mathbb{E}[\Phi(X_t) - \Phi_t^*])^{\theta} + \left(  \mathbb{E} \Lambda_t \right)^{\theta}} \geq 1
\end{aligned}
\end{equation*}
for all $t > m$. Using the fact that $(a + b)^{\theta} \leq a^{\theta} + b^{\theta}$, we get
\begin{equation*}
\begin{aligned}
\frac{ad(1 - \theta){\bf C_t}}{(\mathbb{E}[\Psi_t - \Phi_t^*])^{\theta}} 
= \frac{ad(1 - \theta){\bf C_t}}{(\mathbb{E}[\Phi_{t} - \Phi_t^* + \Lambda_t ])^{\theta}}
\geq \frac{ad(1 - \theta){\bf C_t}}{(\mathbb{E}[\Phi_{t} - \Phi_t^*])^\theta + (\mathbb{E} \Lambda_t)^{\theta}} 
\geq 1,~~\forall t >m.
\end{aligned}
\end{equation*}
Hence, for $\varphi(r) = adr^{1-\theta}$,
\begin{equation*}
\varphi^{\prime}(\mathbb{E}[ \Psi_t - \Phi_t^*] ) {\bf C_t} \geq 1, ~~~\forall t >m.
\end{equation*}
By the concavity of $\varphi$,
\begin{equation*}
\begin{aligned}
\varphi(\mathbb{E} [\Psi_t - \Phi_t^*]) - \varphi(\mathbb{E}[\Psi_{t+1} - \Phi_{t+1}^*])& \geq \varphi^{\prime} (\mathbb{E}[\Psi_t - \Phi_t^*]) (\mathbb{E} [\Psi_t - \Phi_t^* + \Phi_{t+1}^* - \Psi_{t+1}])\\
&\geq \varphi^{\prime}(\mathbb{E} [\Psi_t - \Phi_t^*]) (\mathbb{E}[\Psi_t - \Psi_{t+1}]),
\end{aligned}
\end{equation*}
where the last inequality is due to the non-decreasing property of $\Phi_t^*$. Denote $\Delta_{p,q} \overset{\mathrm{def}}{=} \varphi(\mathbb{E}[\Psi_p - \Phi_p^*]) - \varphi (\mathbb{E}[\Psi_q - \Phi_q^*])$, then
\begin{equation*}
\Delta_{t,t+1}{\bf C_t} \geq \mathbb{E}[\Psi_t - \Psi_{t+1}].
\end{equation*}
Since $\mathbb{E}[\Psi_t - \Psi_{t+1}] \geq \Lambda(\gamma) \mathbb{E} \| y_{t+1} - y_t \|^2 + C_1 \mathbb{E}\| y_t - y_{t-1}\|^2$, we have
\begin{equation}\label{eq60}
\begin{aligned}
\Delta_{t,t+1} {\bf C_t}&\geq \Lambda(\gamma) \mathbb{E} \| y_{t+1}  - y_t \|^2 + C_1 \mathbb{E} \| y_{t}  - y_{t-1} \|^2\\
&\geq \max\{\Lambda(\gamma),C_1\} \mathbb{E} \| y_{t+1} - y_t \|^2 + \max\{\Lambda(\gamma),C_1\} \mathbb{E} \| y_{t} - y_{t-1} \|^2.
\end{aligned}
\end{equation}
Applying Young's inequality to \eqref{eq60} yields
\begin{equation*}
\begin{aligned}
&2\sqrt{\mathbb{E}\| y_{t+1} - y_t \|^2 + \mathbb{E}\| y_{t} - y_{t-1} \|^2}\\
&\leq 2\sqrt{\big(\max\{\Lambda(\gamma),C_1\}\big)^{-1}{\bf C_t} \Delta_{t,t+1}} \leq \frac{{\bf C_t}}{2\big( C+ \frac{2\sqrt{sV_{\Upsilon}}}{\rho}\big)} + \frac{2\big( C+ \frac{2\sqrt{sV_{\Upsilon}}}{\rho}\big) \Delta_{t,t+1}}{\max\{\Lambda(\gamma),C_1\} }\\
&\leq \frac{\sqrt{\mathbb{E}\| y_{t+1} - y_{t}\|^2}}{2} + \frac{\sqrt{\mathbb{E}\| y_t - y_{t-1} \|^2}}{2} + \frac{\sqrt{s}(\sqrt{\mathbb{E}\Upsilon_{t}} - \sqrt{\mathbb{E}\Upsilon_{t+1}})}{\big( C+ \frac{2\sqrt{sV_{\Upsilon}}}{\rho}\big)\rho} + \frac{2\big( C+ \frac{2\sqrt{sV_{\Upsilon}}}{\rho}\big) \Delta_{t,t+1}}{\max\{\Lambda(\gamma),C_1\} }.\\
\end{aligned}
\end{equation*}
Then we have
\begin{equation}\label{eq62}
\begin{aligned}
&\sqrt{\mathbb{E}\| y_{t+1} - y_t \|^2} + \sqrt{\mathbb{E}\| y_{t} - y_{t-1} \|^2}\\
&\leq \sqrt{2} \sqrt{\mathbb{E}\| y_{t+1} - y_t \|^2 + \mathbb{E}\| y_{t} - y_{t-1} \|^2} \\
&\leq  \frac{\sqrt{2}\sqrt{\mathbb{E}\| y_{t+1} - y_{t}\|^2}}{4} + \frac{\sqrt{2}\sqrt{\mathbb{E}\| y_t - y_{t-1} \|^2}}{4} + \frac{\sqrt{2s}(\sqrt{\mathbb{E}\Upsilon_{t}} - \sqrt{\mathbb{E}\Upsilon_{t+1}})}{2\big( C+ \frac{2\sqrt{sV_{\Upsilon}}}{\rho}\big)\rho} + \frac{2\big( C+ \frac{2\sqrt{sV_{\Upsilon}}}{\rho}\big) \Delta_{t,t+1}}{\max\{\Lambda(\gamma),C_1\} }.\\
\end{aligned}
\end{equation}
Summing inequality \eqref{eq62} from $t=m$ to $i$,
\begin{equation*}
\begin{aligned}
&\sum_{t=m}^{i} \sqrt{\mathbb{E} \| y_{t+1} - y_t \|^2} + \sum_{t=m}^{i} \sqrt{\mathbb{E} \| y_{t} - y_{t-1} \|^2} \\
&\leq \frac{\sqrt{2}}{4} \sum_{t=m}^{i} \sqrt{\mathbb{E} \| y_{t+1} - y_{t} \|^2}  + \frac{\sqrt{2}}{4} \sum_{t=m}^{i} \sqrt{\mathbb{E} \| y_{t} - y_{t-1} \|^2} + \frac{\sqrt{2s}}{2\big( C+ \frac{2\sqrt{sV_{\Upsilon}}}{\rho}\big) \rho} \sum_{t=m}^i (\sqrt{\mathbb{E} \Upsilon_{t} } - \sqrt{\mathbb{E}\Upsilon_{t+1}})\\
&\quad + \frac{\sqrt{2} \big( C+ \frac{2\sqrt{sV_{\Upsilon}}}{\rho}\big)}{\max\{\Lambda(\gamma),C_1\}} \sum_{t=m}^{i} \Delta_{t, t+1}\\
&\leq \frac{\sqrt{2}}{4} \sum_{t=m}^{i} \sqrt{\mathbb{E} \| y_{t+1} - y_{t} \|^2} + \frac{\sqrt{2}}{4} \sum_{t=m}^{i} \sqrt{\mathbb{E} \| y_{t} - y_{t-1} \|^2} + \frac{\sqrt{2s}}{2 \big( C+ \frac{2\sqrt{sV_{\Upsilon}}}{\rho}\big)\rho} (\sqrt{\mathbb{E} \Upsilon_{m} } - \sqrt{\mathbb{E}\Upsilon_i}) \\
&\quad + \frac{\sqrt{2}  \big( C+ \frac{2\sqrt{sV_{\Upsilon}}}{\rho}\big)}{\max\{\Lambda(\gamma),C_1\}} \Delta_{m, i+1}.\\
\end{aligned}
\end{equation*}
This implies that
\begin{equation*}
\begin{aligned}
&\sum_{t=m}^{i} \sqrt{\mathbb{E} \| y_{t+1} - x_t \|^2} + \sum_{t=m}^{i} \sqrt{\mathbb{E} \| y_{t} - y_{t-1} \|^2} \\
&\leq \frac{(2\sqrt{2}+1)\sqrt{s}}{3 \big( C+ \frac{2\sqrt{sV_{\Upsilon}}}{\rho}\big)\rho} \sqrt{\mathbb{E} \Upsilon_{m} }  + \frac{(4\sqrt{2} + 2)\big( C+ \frac{2\sqrt{sV_{\Upsilon}}}{\rho}\big)}{3\max\{\Lambda(\gamma),C_1\}} \Delta_{m, i+1}\\ 
&\leq C_2( \sqrt{\mathbb{E} \Upsilon_{m} } +  \Delta_{m, i+1} ),
\end{aligned}
\end{equation*}
where $C_2 = \max\{\frac{(2\sqrt{2}+1)\sqrt{s}}{3\big( C+ \frac{2\sqrt{sV_{\Upsilon}}}{\rho}\big)\rho}, \frac{(4\sqrt{2} + 2)\big( C+ \frac{2\sqrt{sV_{\Upsilon}}}{\rho}\big)}{3\max\{\Lambda(\gamma),C_1\}} \}$. Using Jensen's inequality yields
\begin{equation*}
\begin{aligned}
&\sum_{t=m}^{i} \mathbb{E} \| y_{t+1} - y_t \| + \sum_{t=m}^{i} \mathbb{E} \| y_{t} - y_{t-1} \| \\
&\leq \sum_{t=m}^{i} \sqrt{\mathbb{E} \| y_{t+1} - y_t \|^2} + \sum_{t=m}^{i} \sqrt{\mathbb{E} \| y_{t} - y_{t-1} \|^2}\\
&\leq C_2 ( \sqrt{\mathbb{E} \Upsilon_{m-1} } +  \Delta_{m, i+1} ).
\end{aligned}
\end{equation*}
Since $\Delta_{m, i+1}$ is bounded, we obtain
\begin{equation*}
\sum_{t=m}^{\infty} \mathbb{E} \| y_{t+1} - y_t \| < \infty~~ \textmd{and}~~\sum_{t=m}^{\infty} \mathbb{E} \| x_{t+1} - x_t \| < \infty.
\end{equation*}
Together with \eqref{algx}, we get
\begin{equation*}
\sum_{t=m}^{\infty} \mathbb{E} \| z_{t+1} - z_t \| < \infty.
\end{equation*}
\end{proof}

\end{document}